\newtheorem{theorem}{Theorem}
\theoremstyle{plain}
\newtheorem{axiom}{Axiom}
\newtheorem{conjecture}{Conjecture}
\newtheorem{corollary}{Corollary}
\newtheorem{definition}{Definition}
\newtheorem{example}{Example}
\newtheorem{exercise}{Exercise}
\newtheorem{lemma}{Lemma}
\newtheorem{proposition}{Proposition}
\newtheorem{remark}{Remark}
\newtheorem* {AsmK}{Assumption K}
\newtheorem* {AsmZ}{Assumption Z}
\newtheorem* {AsmQ}{Assumption Q}
\newtheorem* {AsmJ}{Assumption J}
\newtheorem* {AsmC}{Assumption C}
\providecommand{\BOXEDSPECIAL}[4]{\hbox to #2{\raise #3\hbox to #2{\null #1\hfil}}}
\chardef\@x10\chardef\@xv60
\def\tcitime{
\def\@time{%
  \@minute\time\@hour\@minute\divide\@hour\@xv
  \ifnum\@hour<\@x 0\fi\the\@hour:%
  \multiply\@hour\@xv\advance\@minute-\@hour
  \ifnum\@minute<\@x 0\fi\the\@minute
  }}%
\def\QCTOpt[#1]#2{%
  \def\QCTOptB{#1}
  \def\QCTOptA{#2}
}
\def\QCTNOpt#1{%
  \def\QCTOptA{#1}
  \let\QCTOptB\empty
}
\def\Qct{%
  \@ifnextchar[{%
    \QCTOpt}{\QCTNOpt}
}
\def\QCBOpt[#1]#2{%
  \def\QCBOptB{#1}
  \def\QCBOptA{#2}
}
\def\QCBNOpt#1{%
  \def\QCBOptA{#1}
  \let\QCBOptB\empty
}
\def\Qcb{%
  \@ifnextchar[{%
    \QCBOpt}{\QCBNOpt}
}
\def\PrepCapArgs{%
  \ifx\QCBOptA\empty
    \ifx\QCTOptA\empty
      {}%
    \else
      \ifx\QCTOptB\empty
        {\QCTOptA}%
      \else
        [\QCTOptB]{\QCTOptA}%
      \fi
    \fi
  \else
    \ifx\QCBOptA\empty
      {}%
    \else
      \ifx\QCBOptB\empty
        {\QCBOptA}%
      \else
        [\QCBOptB]{\QCBOptA}%
      \fi
    \fi
  \fi
}
\def\GRAPHICSPS#1{%
 \ifcase\GRAPHICSTYPE
   \special{ps: #1}%
 \or
   \special{language "PS", include "#1"}%
 \fi
}%
\def\graffile#1#2#3#4#5{%
    \bgroup
    \leavevmode
    \@ifundefined{bbl@deactivate}{\def~{\string~}}{\activesoff}
    \raise -#4 \BOXTHEFRAME{%
       \BOXEDSPECIAL{#1}{#2}{#3}{#5}}%
    \egroup
}%
\def\draftbox#1#2#3#4{%
 \leavevmode\raise -#4 \hbox{%
  \frame{\rlap{\protect\tiny #1}\hbox to #2%
   {\vrule height#3 width\z@ depth\z@\hfil}%
  }%
 }%
}%
\newif\ifwasdraft
\def\GRAPHIC#1#2#3#4#5{%
 \ifnum\draft=\@ne\draftbox{#2}{#3}{#4}{#5}%
  \else\graffile{#1}{#3}{#4}{#5}{#2}%
  \fi
 }%
\def\addtoLaTeXparams#1{%
    \edef\LaTeXparams{\LaTeXparams #1}}%
\newif\ifBoxFrame \BoxFramefalse
\newif\ifOverFrame \OverFramefalse
\newif\ifUnderFrame \UnderFramefalse
\def\BOXTHEFRAME#1{%
   \hbox{%
      \ifBoxFrame
         \frame{#1}%
      \else
         {#1}%
      \fi
   }%
}
\def\doFRAMEparams#1{\BoxFramefalse\OverFramefalse\UnderFramefalse\readFRAMEparams#1\end}%
\def\readFRAMEparams#1{%
 \ifx#1\end%
  \let\next=\relax
  \else
  \ifx#1i\dispkind=\z@\fi
  \ifx#1d\dispkind=\@ne\fi
  \ifx#1f\dispkind=\tw@\fi
  \ifx#1t\addtoLaTeXparams{t}\fi
  \ifx#1b\addtoLaTeXparams{b}\fi
  \ifx#1p\addtoLaTeXparams{p}\fi
  \ifx#1h\addtoLaTeXparams{h}\fi
  \ifx#1X\BoxFrametrue\fi
  \ifx#1O\OverFrametrue\fi
  \ifx#1U\UnderFrametrue\fi
  \ifx#1w
    \ifnum\draft=1\wasdrafttrue\else\wasdraftfalse\fi
    \draft=\@ne
  \fi
  \let\next=\readFRAMEparams
  \fi
 \next
 }%
\def\IFRAME#1#2#3#4#5#6{%
      \bgroup
      \let\QCTOptA\empty
      \let\QCTOptB\empty
      \let\QCBOptA\empty
      \let\QCBOptB\empty
      #6%
      \parindent=0pt%
      \leftskip=0pt
      \rightskip=0pt
      \setbox0 = \hbox{\QCBOptA}%
      \@tempdima = #1\relax
      \ifOverFrame
          \typeout{This is not implemented yet}%
          \show\HELP
      \else
         \ifdim\wd0>\@tempdima
            \advance\@tempdima by \@tempdima
            \ifdim\wd0 >\@tempdima
               \textwidth=\@tempdima
               \setbox1 =\vbox{%
                  \noindent\hbox to \@tempdima{\hfill\GRAPHIC{#5}{#4}{#1}{#2}{#3}\hfill}\\%
                  \noindent\hbox to \@tempdima{\parbox[b]{\@tempdima}{\QCBOptA}}%
               }%
               \wd1=\@tempdima
            \else
               \textwidth=\wd0
               \setbox1 =\vbox{%
                 \noindent\hbox to \wd0{\hfill\GRAPHIC{#5}{#4}{#1}{#2}{#3}\hfill}\\%
                 \noindent\hbox{\QCBOptA}%
               }%
               \wd1=\wd0
            \fi
         \else
            \ifdim\wd0>0pt
              \hsize=\@tempdima
              \setbox1 =\vbox{%
                \unskip\GRAPHIC{#5}{#4}{#1}{#2}{0pt}%
                \break
                \unskip\hbox to \@tempdima{\hfill \QCBOptA\hfill}%
              }%
              \wd1=\@tempdima
           \else
              \hsize=\@tempdima
              \setbox1 =\vbox{%
                \unskip\GRAPHIC{#5}{#4}{#1}{#2}{0pt}%
              }%
              \wd1=\@tempdima
           \fi
         \fi
         \@tempdimb=\ht1
         \advance\@tempdimb by \dp1
         \advance\@tempdimb by -#2%
         \advance\@tempdimb by #3%
         \leavevmode
         \raise -\@tempdimb \hbox{\box1}%
      \fi
      \egroup%
}%
\def\DFRAME#1#2#3#4#5{%
 \begin{center}
     \let\QCTOptA\empty
     \let\QCTOptB\empty
     \let\QCBOptA\empty
     \let\QCBOptB\empty
     \ifOverFrame 
        #5\QCTOptA\par
     \fi
     \GRAPHIC{#4}{#3}{#1}{#2}{\z@}
     \ifUnderFrame 
        \nobreak\par\nobreak#5\QCBOptA
     \fi
 \end{center}%
 }%
\def\FFRAME#1#2#3#4#5#6#7{%
 \begin{figure}[#1]%
  \let\QCTOptA\empty
  \let\QCTOptB\empty
  \let\QCBOptA\empty
  \let\QCBOptB\empty
  \ifOverFrame
    #4
    \ifx\QCTOptA\empty
    \else
      \ifx\QCTOptB\empty
        \caption{\QCTOptA}%
      \else
        \caption[\QCTOptB]{\QCTOptA}%
      \fi
    \fi
    \ifUnderFrame\else
      \label{#5}%
    \fi
  \else
    \UnderFrametrue%
  \fi
  \begin{center}\GRAPHIC{#7}{#6}{#2}{#3}{\z@}\end{center}%
  \ifUnderFrame
    #4
    \ifx\QCBOptA\empty
      \caption{}%
    \else
      \ifx\QCBOptB\empty
        \caption{\QCBOptA}%
      \else
        \caption[\QCBOptB]{\QCBOptA}%
      \fi
    \fi
    \label{#5}%
  \fi
  \end{figure}%
 }%
\def\makeactives{
  \catcode`\"=\active
  \catcode`\;=\active
  \catcode`\:=\active
  \catcode`\'=\active
  \catcode`\~=\active
}
   \gdef\activesoff{%
      \def"{\string"}
      \def;{\string;}
      \def:{\string:}
      \def'{\string'}
    }
\def\FRAME#1#2#3#4#5#6#7#8{%
 \bgroup
 \ifnum\draft=\@ne
   \wasdrafttrue
 \else
   \wasdraftfalse%
 \fi
 \def\LaTeXparams{}%
 \dispkind=\z@
 \def\LaTeXparams{}%
 \doFRAMEparams{#1}%
 \ifnum\dispkind=\z@\IFRAME{#2}{#3}{#4}{#7}{#8}{#5}\else
  \ifnum\dispkind=\@ne\DFRAME{#2}{#3}{#7}{#8}{#5}\else
   \ifnum\dispkind=\tw@
    \edef\@tempa{\noexpand\FFRAME{\LaTeXparams}}%
    \@tempa{#2}{#3}{#5}{#6}{#7}{#8}%
    \fi
   \fi
  \fi
  \ifwasdraft\draft=1\else\draft=0\fi{}%
  \egroup
 }%
\def\TEXUX#1{"texux"}
\def\limfunc#1{\mathop{\rm #1}}%
\def\func#1{\mathop{\rm #1}\nolimits}%
\long\def\QQQ#1#2{%
     \long\expandafter\def\csname#1\endcsname{#2}}%
\long\def\QQA#1#2{}%
\newcommand{\QTR}[2]{\csname text#1\endcsname{#2}}
\def\EXPAND#1[#2]#3{}%
\def\NOEXPAND#1[#2]#3{}%
\def\LaTeXparent#1{}%
\def\ChildStyles#1{}%
\def\ChildDefaults#1{}%
\def\QTagDef#1#2#3{}%
  \providecommand{\UNICODE}[2][]{}
\def\QQfnmark#1{\footnotemark}
 \def\abstract{%
  \if@twocolumn
   \section*{Abstract (Not appropriate in this style!)}%
   \else \small 
   \begin{center}{\bf Abstract\vspace{-.5em}\vspace{\z@}}\end{center}%
   \quotation 
   \fi
  }%
   \def\registered{\relax\ifmmode{}\r@gistered
                    \else$\m@th\r@gistered$\fi}%
 \def\r@gistered{^{\ooalign
  {\hfil\raise.07ex\hbox{$\scriptstyle\rm\text{R}$}\hfil\crcr
  \mathhexbox20D}}}}{}%
\newdimen\theight
\def\Column{%
 \vadjust{\setbox\z@=\hbox{\scriptsize\quad\quad tcol}%
  \theight=\ht\z@\advance\theight by \dp\z@\advance\theight by \lineskip
  \kern -\theight \vbox to \theight{%
   \rightline{\rlap{\box\z@}}%
   \vss
   }%
  }%
 }%
\def\qed{%
 \ifhmode\unskip\nobreak\fi\ifmmode\ifinner\else\hskip5\p@\fi\fi
 \hbox{\hskip5\p@\vrule width4\p@ height6\p@ depth1.5\p@\hskip\p@}%
 }%
\def\miss{\hbox{\vrule height2\p@ width 2\p@ depth\z@}}%
\def\tcol#1{{\baselineskip=6\p@ \vcenter{#1}} \Column}  %
\def\newfmtname{LaTeX2e}
  \DeclareOldFontCommand{\rm}{\normalfont\rmfamily}{\mathrm}
  \DeclareOldFontCommand{\sf}{\normalfont\sffamily}{\mathsf}
  \DeclareOldFontCommand{\tt}{\normalfont\ttfamily}{\mathtt}
  \DeclareOldFontCommand{\bf}{\normalfont\bfseries}{\mathbf}
  \DeclareOldFontCommand{\it}{\normalfont\itshape}{\mathit}
  \DeclareOldFontCommand{\sl}{\normalfont\slshape}{\@nomath\sl}
  \DeclareOldFontCommand{\sc}{\normalfont\scshape}{\@nomath\sc}
  \newcounter{equationnumber}  
  \def\mathletters{%
     \addtocounter{equation}{1}
     \edef\@currentlabel{\theequation}%
     \setcounter{equationnumber}{\c@equation}
     \setcounter{equation}{0}%
     \edef\theequation{\@currentlabel\noexpand\alph{equation}}%
  }
    \def\BibTeX{{\rm B\kern-.05em{\sc i\kern-.025em b}\kern-.08em
                 T\kern-.1667em\lower.7ex\hbox{E}\kern-.125emX}}}{}%
\def\AmS{{\protect\usefont{OMS}{cmsy}{m}{n}%
                A\kern-.1667em\lower.5ex\hbox{M}\kern-.125emS}}}{}%
\def\@@eqncr{\let\@tempa\relax
    \ifcase\@eqcnt \def\@tempa{& & &}\or \def\@tempa{& &}%
      \else \def\@tempa{&}\fi
     \@tempa
     \if@eqnsw
        \iftag@
           \@taggnum
        \else
           \@eqnnum\stepcounter{equation}%
        \fi
     \fi
     \global\tag@false
     \global\@eqnswtrue
     \global\@eqcnt\z@\cr}
\def\TCItag{\@ifnextchar*{\@TCItagstar}{\@TCItag}}
\def\@TCItag#1{%
    \global\tag@true
    \global\def\@taggnum{(#1)}}
\def\@TCItagstar*#1{%
    \global\tag@true
    \global\def\@taggnum{#1}}
\begin{document}

\title{\textbf{Robust Inference for Threshold Regression Models}\thanks{%
We thank anonymous referees and an Associate Editor for their constructive comments.		
M. Seo 
gratefully acknowledges the support from Promising-Pioneering
Researcher Program through Seoul National University (SNU) and from the
Ministry of Education of the Republic of Korea and the National Research
Foundation of Korea (NRF-0405-20180026).}}
\author{Javier Hidalgo \\
London School of Economics \and Jungyoon Lee \\
Royal Holloway, University London \and Myung Hwan Seo \\
Seoul National University }
\date{}
\maketitle

\begin{abstract}
This paper is concerned with inference in threshold regression models when
the practitioners do not know whether at the threshold point the true
specification has a kink or a jump. We nest previous works that assume
either continuity or discontinuity at the threshold point and develop robust
inference methods on the parameters of the model, which are valid under both
specifications. In particular, we found that the parameter values under the
kink restriction are irregular points of the Hessian matrix of the expected
Gaussian quasi-likelihood. This irregularity destroys the asymptotic
normality and induces the nonstandard cube root convergence rate for the
threshold estimate. However, it also enables us to
obtain the same asymptotic distribution as in Hansen (2000) for the
quasi-likelihood ratio statistic for the unknown threshold
up to an unknown scale parameter. We show that this scale parameter can be
consistently estimated by a kernel method
as long as no higher order kernel is used.
Furthermore, we propose to construct confidence intervals for the unknown
threshold by bootstrap test inversion, also known as grid bootstrap. Finite
sample performances of the grid bootstrap confidence intervals are examined
through Monte Carlo simulations. We also implement our procedure to an
economic empirical application.
\end{abstract}

\noindent JEL Classification: C12, C13, C24.

\noindent Key words: Change Point, Kink, Grid Bootstrap, Cube Root.

\thispagestyle{empty}

\newpage

\section{\textbf{INTRODUCTION}\label{sec:Intro}}

This paper examines robust inference in threshold models without a priori
knowledge on whether the model is or not continuous at the threshold point.
Since its introduction, threshold models have gained a lot of attention in
econometrics, statistics and other fields, see Tong (1990) and Hansen $%
\left( 2000\right) $ among others. In the time series context, their
popularity is due to the fact that they are capable to explain nonlinear
features present in many data such as chaos, cycles, irreversibility among
others. In addition they have proved to have superior forecast performance
in times of recession, see Tiao and Tsay $\left( 1994\right) $.

We nest previous works that assume either continuity or discontinuity at the threshold point and develop robust inference methods on the parameters of the model, which are valid under both specifications
When looking at inferences regarding these type of models, the literature
has explicitly assumed that either the threshold regression model is
continuous and kinked or it is discontinuous at the threshold point. For
instance, Chan (1993) and Hansen $\left( 2000\right) $ have focused on
inference when the model is discontinuous at the threshold point, whereas
Chan and Tsay $\left( 1998\right) ,$ Hansen $\left( 2017\right) $ and Feder
(1975a) have focused on inference in kink models. However, there is no a
priori reason to believe that the model is or it is not continuous. The main
motivation to have a \textquotedblleft unified\textquotedblright\ or robust
inference theory for these models is that their statistical properties are
very different whether one estimates the model under the restriction of
continuity or not. In particular, the estimates of the parameters of the
model are all square root $n$-consistent and asymptotically normal when the
model is estimated under the (true) assumption of continuity, but under
discontinuity the least squares estimator of $\gamma $ is super consistent,
asymptotically independent of the slope parameter estimates, and
non-Gaussian. So, it is worthwhile to obtain some statistical properties of
estimates of the parameters in a model that nests continuous and
discontinuous frameworks.

We show an interesting property that the estimator of the threshold
parameter fails to be root-$n$ consistent, contrary to what one might
expect, if the model is continuous but the true restriction is not imposed
in the estimation procedure. More specifically, we show that the rate of
convergence of the estimate of the threshold point becomes $n^{1/3}$ in
contrast to $n^{1/2}$, which was first obtained by Feder $\left(
1975a\right) $ and in the time series context by Chan and Tsay $\left(
1998\right) $ by imposing the (true) constraint of a kink in its estimation.
The asymptotic distribution of the threshold estimator is no longer normal
but the \textquotedblleft $\limfunc{argmax}$\textquotedblright\ of some
Gaussian process. On the other hand, we find that the unconstrained
estimator of the slope parameters is asymptotically independent of the
estimator of the threshold point, contrary to the findings in previous works. 
The asymptotic independence is also the case under the jump
models of Chan $\left( 1993\right) $ or Hansen $(2000)$ but not under the
constrained estimation of Feder's (1975) or Chan and Tsay's (1998) kink
models. This finding is interesting and new, when compared to standard
results in regression models, where it is known that the consequence of not
using the (true) restrictions is inefficiency but otherwise the asymptotic
distribution is still Gaussian and the rate of convergence is the same. So,
we conclude that the statistical inference for threshold regression models
hinges too much on the unverified assumption of kink versus jump.

Our preceding discussion motivates us to develop a robust inference in the
threshold regression model. To that end, we first show that a
quasi-likelihood ratio statistic for the location of the threshold has the
same asymptotic distribution up to a scale constant that depends on whether
the true regression model has a kink or a jump. Second, we present an
estimator for the scale factor based on the ratio of two kernel
Nadaraya-Watson estimators. The consistency of this estimator is standard
under the jump model but non-standard under the kink model because both its
numerator and denominator converge to zero in probability. However, we prove
that, similar to L'Hopital rule, the ratio of the two degenerating terms
still converges in probability to the correct scale factor under the
interesting requirement that higher-order kernels should not be used. Third,
we show that the asymptotic distribution of the unconstrained estimator of
the slope parameters when the model has a kink is identical to the one under
the jump specification, which results from the asymptotic independence
between the estimators of the slope and threshold parameters. This is not
the case if the (correct) kink assumption were employed in the estimation of
the parameters.

The last goal of this paper is to present valid bootstrap schemes for the\
construction of confidence sets for the threshold location. The motivation
comes from the fact that sometimes the asymptotic critical values appear to
be a poor approximation to the finite-sample ones, as documented by Hansen $%
(2000)$ and also in our Section \ref{sec:MC} among others. In addition, the
first-order validity of the bootstrap is of theoretical interest and it has
not been established even under the Hansen's $(2000)$ shrinking jump design.
The interest stems from two sets of findings in the literature regarding the
failure of bootstrap for non-standard estimators: firstly with cube-root
estimators such as the maximum score estimator, and secondly with
super-consistent estimators such as the estimator of autoregressive
coefficients of unit root processes and the threshold estimator under Chan's
(1993) model, see Abrevaya and Huang $(2005)$, Seijo and Sen $(2011)$, and
Yu $(2014)$, just to name a few. Note that the unconstrained estimator of
the threshold belongs to the cube-root class under the kink model and to the
super-consistent class under the jump models. Unlike failures of bootstrap
in the cases listed above, we show that the proposed bootstrap statistics,
which build on the wild bootstrap, correctly approximate the sampling
distribution of the scaled quasi-likelihood ratio statistic in our settings.
This contrast is perhaps due to the fact that the nuisance parameter in the
asymptotic distribution under the non-shrinking model is
infinite-dimensional while the ones in our continuous and shrinking
specifications are finite-dimensional scaling terms. Furthermore, we propose
bootstrap test inversion confidence interval for the threshold, also known
as the grid bootstrap in Hansen $(1999)$, to enhance the finite-sample
coverage probability.

We then present results of a small Monte Carlo experiment, which report good
finite-sample performance of our bootstrap procedure for inference on the
threshold location. In our empirical application, we apply our robust
inferential method to the time series data on real GDP growth and
debt-to-GDP ratio of a number of countries. Numerous works had fitted jump
threshold models to a variety of of datasets, see e.g. Caner, Grennes, and
Koehler-Geib $(2010)$, Cecchetti, Mohanty, and Zampolli $(2011)$, and Lee et
al. $(2017)$, while Hansen $(2017)$ had fitted kink threshold model to the
US time series data. As there is little guidance from economic theory on
suitability of jump or kink models, we advocate the use of our robust
inference, and find substantial heterogeneity across countries in not just
the estimated model parameters but also in the presence and location of
threshold effect.

In Section \ref{sec:model} we introduce the model and present a set of
regularity assumptions and describe how to estimate the parameters of the
model. In particular, we examine the properties of the least squares
estimator of the parameters when the model is continuous but we estimate
them without this knowledge. In Section \ref{sec:Unified} we then develop
robust inferential methods for model parameters that are valid under both
continuous and discontinuous settings, despite the slower rate of
convergence for the estimate of the threshold under the kink specification.
We then present in Section \ref{sec:btrp} a bootstrap algorithm for
inference on the model parameters, establishing their validity. Section \ref%
{sec:MC} presents results of a small Monte Carlo study, followed by Section %
\ref{sec:Application}, which contains the empirical application. Section \ref%
{sec:Conclusion} concludes. This paper has an appendix that contains some of
the proofs and an online supplement that presents the remaining proofs,
technical lemmas, and more numerical results for Sections \ref{sec:MC} and %
\ref{sec:Application}.

\section{\textbf{MODEL AND ESTIMATORS}\label{sec:model}}

We shall consider the following threshold regression model 
\begin{equation}
y_{t}=\beta ^{\prime }x_{t}+\delta ^{\prime }x_{t}\mathbf{1}\left\{
q_{t}>\gamma \right\} +\varepsilon _{t}\text{,}  \label{eq:model}
\end{equation}%
where $\mathbf{1}\left\{ \cdot \right\} $ denotes the indicator function and 
$x_{t}$ is a $k$-dimensional vector of regressors. The parameter $\gamma $
is referred to as a threshold point, taking values in a compact parameter
space $\Gamma $, which is a subset of the interior on the domain of the
threshold variable $q_{t}$. It is worth mentioning that all our results hold
true also when $q_{t}=t$, which is the case with structural break models.
However, we have opted not to include this scenario for the sake of clarity
and notational simplicity.

We assume that $q_{t}$ is an element of the regressor vector $x_{t}$ and
denote 
\begin{equation}
x_{t}=\left( 1,x_{t2}^{\prime },q_{t}\right) ^{\prime };\ \ \ \delta =\left(
\delta _{1},\delta _{2}^{\prime },\delta _{3}\right) ^{\prime }\text{,}
\label{x_not}
\end{equation}%
where $\delta $ is partitioned to match the dimensionality of $x_{t}$. Also
we shall abbreviate $\mathbf{1}_{t}\left( \gamma \right) =\mathbf{1}\left\{
q_{t}>\gamma \right\} $ and $x_{t}\left( \gamma \right) =\left(
x_{t}^{\prime },x_{t}^{\prime }\mathbf{1}_{t}\left( \gamma \right) \right)
^{\prime }$, so that we can write $\left( \ref{eq:model}\right) $ as 
\begin{eqnarray}
y_{t} &=&\beta ^{\prime }x_{t}+\delta _{1}\mathbf{1}_{t}\left( \gamma
\right) +\delta _{2}^{\prime }x_{t2}\mathbf{1}_{t}\left( \gamma \right)
+\delta _{3}q_{t}\mathbf{1}_{t}\left( \gamma \right) +\varepsilon _{t}
\label{eq:model'} \\
&=&\alpha ^{\prime }x_{t}\left( \gamma \right) +\varepsilon _{t}\text{,}%
\quad \text{where}\quad \alpha =(\beta ^{\prime },\delta ^{\prime })^{\prime
}\text{.}  \notag
\end{eqnarray}

Before stating some regularity assumptions on the model, we need to
introduce some extra notation. Let $f\left( \cdot \right) $ denote the
density function of $q_{t}$, which we assume to exist, and $\sigma
^{2}\left( \gamma \right) =E\left( \varepsilon _{t}^{2}\mid q_{t}=\gamma
\right) $, the conditional variance function of error term, while $\sigma
^{2}=E(\varepsilon _{t}^{2})$ denotes the unconditional variance. Denote $%
k\times k$ matrices $D\left( \gamma \right) =E\left( x_{t}x_{t}^{\prime
}|q_{t}=\gamma \right) $, $V\left( \gamma \right) =E\left(
x_{t}x_{t}^{\prime }\varepsilon _{t}^{2}|q_{t}=\gamma \right) $ and let $%
D=D\left( \gamma _{0}\right) $ and $V=V\left( \gamma _{0}\right) $. As usual
the \textquotedblleft $0$\textquotedblright\ subscript on a parameter
indicates its true unknown value. Finally, let $M=E(\mathbf{%
x_{t}x_{t}^{\prime }})$ and $\Omega =E(\mathbf{x_{t}x_{t}^{\prime }}%
\varepsilon _{t}^{2})$ with $\mathbf{x_{t}}=x_{t}\left( \gamma _{0}\right) $.

\begin{AsmZ}
Let $\left\{ x_{t},\varepsilon _{t}\right\} _{t\in \mathbb{Z}}$ be a
strictly stationary, ergodic sequence of random variables such that their $%
\rho $-mixing coefficients satisfy $\sum_{m=1}^{\infty }\rho
_{m}^{1/2}<\infty $ and $E\left( \varepsilon _{t}|{{\mathcal{F}}}%
_{t-1}\right) =0$, where ${{\mathcal{F}}}_{t}$ is the filtration up to time $%
t$. Furthermore, $M,\Omega >0$, $E\left\Vert x_{t}\right\Vert ^{4}<\infty $, 
$E\left\Vert x_{t}\varepsilon _{t}\right\Vert ^{4}<\infty $ and $E\left\vert
\varepsilon _{t}\right\vert ^{4+\eta }<\infty $ for some $\eta >0$.
\end{AsmZ}

\begin{AsmQ}
The functions $f\left( \gamma \right) $, $V\left( \gamma \right) \ $and $%
D\left( \gamma \right) $ are continuous at $\gamma =\gamma _{0}$. For all $%
\gamma \in \Gamma $, the functions $f\left( \gamma \right) $, $E\big(%
x_{t}x_{t}^{\prime }\mathbf{1}\left\{ q_{t}\leq \gamma \right\} \big)$ and $%
E\left( x_{t2}x_{t2}^{\prime }|q_{t}=\gamma \right) $ are positive and
continuous, and the functions $f\left( \gamma \right) $,$\ E\big(%
|x_{t}|^{4}|q_{t}=\gamma \big)$ and $E\big(|x_{t}\varepsilon
_{t}|^{4}|q_{t}=\gamma \big)$ are bounded by some $C<\infty $.
\end{AsmQ}

Assumptions \textbf{Z }and \textbf{Q} are commonly imposed on the
distribution of $\left\{ x_{t},\varepsilon _{t}\right\} $, see e.g. Hansen $%
\left( 2000\right) $, so his comments apply here. As discussed therein, the
self-exciting threshold autoregressive model of Tong $\left( 1990\right) $
satisfies Assumption \textbf{Z.} The condition for $E\left(
x_{t2}x_{t2}^{\prime }|q_{t}=\gamma \right) $ is written in terms of $x_{t2}$
as the other elements in $x_{t}$ are fixed given $q_{t}=\gamma $. While we
allow conditional heteroscedasticity of a general form, Assumption \textbf{Q}
requires continuity of the conditional variance function $\sigma ^{2}(\cdot
) $ at $\gamma _{0}$.

\subsection{Estimators\label{sec:LSE}}

We estimate $\theta _{0}=\left( \alpha _{0}^{\prime },\gamma _{0}\right)
^{\prime }$ by the (non-linear) least squares estimator (LSE), that is, 
\begin{equation}
\widehat{\theta }=\left( \widehat{\alpha }^{\prime },\widehat{\gamma }%
\right) ^{\prime }:=\underset{\theta \in \Theta }{\func{argmin}}\,\mathbb{S}%
_{n}\left( \theta \right) \text{,}  \label{theta_hat}
\end{equation}%
where $\Theta =\left( \Lambda ,\Gamma \right) $ is a compact set in ${{\ 
\mathbb{R}}}^{2k+1}$ and 
\begin{equation}
{{\mathbb{S}}}_{n}\left( \theta \right) :=\frac{1}{n}\sum_{t=1}^{n}\left(
y_{t}-\alpha ^{\prime }x_{t}\left( \gamma \right) \right) ^{2}\text{,}
\label{s_theta}
\end{equation}%
which is a step function in $\gamma $ at $q_{t}$'s. For its computation, we
shall employ a step-wise algorithm. To that end, one could employ the grid
search algorithm on $\Gamma _{n}=\Gamma \cap \left\{ q_{1},...,q_{n}\right\} 
$ to find $\widehat{\gamma }$. Define the concentrated sum of squared
residuals 
\begin{equation}
\widehat{\mathbb{S}}_{n}\left( \gamma \right) :=\frac{1}{n}%
\sum_{t=1}^{n}\left( y_{t}-\widehat{\alpha }^{\prime }\left( \gamma \right)
x_{t}\left( \gamma \right) \right) ^{2}\text{,}  \label{ssngm}
\end{equation}%
where 
\begin{equation}
\widehat{\alpha }\left( \gamma \right) :=\underset{\alpha \in \Lambda }{%
\func{argmin}}\text{~}\frac{1}{n}\sum_{t=1}^{n}\left( y_{t}-\alpha ^{\prime
}x_{t}\left( \gamma \right) \right) ^{2}  \label{s_alpha}
\end{equation}%
is the LSE of $\alpha $ for a given $\gamma $. Then, our estimator of $%
\alpha $ is $\widehat{\alpha }:=\widehat{\alpha }\left( \widehat{\gamma }%
\right) $, with 
\begin{equation}
\widehat{\gamma }:=\underset{\gamma \in \Gamma _{n}}{\func{argmin}}\,%
\widehat{\mathbb{S}}_{n}\left( \gamma \right) \text{.}  \label{s_gamma}
\end{equation}%
Since the minimizer is given by an interval, it is common to let the
estimator be the maximum. This is the unconstrained LSE and for comparison
we also describe the continuity constrained least squares estimator (CLSE), which
minimizes $\left( \ref{s_theta}\right) $ under Assumption \textbf{C} in the next section,
\begin{equation}
\widetilde{\theta }=\left( \widetilde{\alpha }^{\prime },\widetilde{\gamma }%
\right) ^{\prime }:=\underset{\theta \in \Theta :\delta _{1}+\delta
_{3}\gamma =0;\delta _{2}=0}{\func{argmin}}\mathbb{S}_{n}\left( \theta
\right) \text{.}  \label{theta_tilde}
\end{equation}%
This estimator was considered by Feder $\left( 1975a\right) $ and later by Chan
and Tsay $(1998)$ or Hansen $(2017),$ who have established the asymptotic
normality of $\widetilde{\theta }$ with the standard squared root
consistency.

\section{\textbf{Robust Confidence Regions}\label{sec:Unified}}

This section presents our main results, namely how to perform robust
inference in threshold models and in particular on the location of the
threshold point. We begin with developing inference methods for the
regression coefficients $\alpha _{0}$ and the unknown threshold $\gamma _{0}$
based on the LSE $\widehat{\theta }$ when the true regression model has a
kink. Then, they are compared with other inference methods that are
developed under different sampling schemes such as Hansen $\left(
2000\right) $. In particular, we show that a judicious choice of statistics
enables us to perform a robust inference in the sense that the same critical
values can be employed for inference whether the model has a kink or a jump.
That is, we do not need to know whether the model has a kink or a jump to
make inference for the parameters $\alpha _{0}$ and $\gamma _{0}$. As
mentioned in the introduction the motivation comes from the rather
surprising results given in Proposition \ref{Consistency} and Theorem \ref%
{Th:AD_cube} below.

First we state the kink model in terms of assumption.

\begin{AsmC}
Assume that $\delta _{30}\neq 0$ and 
\begin{equation}
\delta _{10}+\delta _{30}\gamma _{0}=0;\ \ \ \delta _{20}=0\text{.}
\label{eq:conti}
\end{equation}
\end{AsmC}

Under Assumption \textit{\textbf{C}} the model (\ref{eq:model'}) is written
as%
\begin{equation}
y_{t}=x_{t}^{\prime }\beta _{0}+\delta _{30}(q_{t}-\gamma _{0})\mathbf{1}%
_{t}\left( \gamma _{0}\right) +\varepsilon _{t}\text{.}  \label{5}
\end{equation}%
Feder (1975), Chan and Tsay (1998), and Hansen (2017) considered the
estimation of the model (\ref{5}) along with an auxiliary condition of $%
\delta _{30}\neq 0$ to ensure the identification of the change-point $\gamma
_{0}$. This is a model with a kink.

Then, the next proposition establishes the consistency and rates of
convergence of the LSE $\widehat{\theta }$ defined in $\left( \ref{theta_hat}%
\right) $ under Assumption \textit{\textbf{C.}}

\begin{proposition}
\label{Consistency}\textit{Under Assumptions \textbf{C,} \textit{\textbf{Z}}
and \textbf{Q}, we have that} 
\begin{equation*}
\widehat{\alpha }-\alpha _{0}=O_{p}\big(n^{-1/2}\big)\ \ \ \text{and}\ \ \ \ 
\widehat{\gamma }-\gamma _{0}=O_{p}\big(n^{-1/3}\big)\text{.}
\end{equation*}
\end{proposition}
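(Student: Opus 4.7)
My plan is to combine the classical M-estimator rate machinery---curvature of the population criterion pitted against the modulus of continuity of the centered empirical criterion---with a careful Taylor expansion of the population loss under Assumption \textbf{C}, which reveals a \emph{cubic} curvature in $\gamma-\gamma_0$ and yields the nonstandard $n^{-1/3}$ rate. Consistency of $\widehat{\theta}$ follows standardly: identify $\theta_0$ as the unique minimizer of $S(\theta):=E(y_t-\alpha'x_t(\gamma))^2$ on $\Theta$ using $\delta_{30}\neq 0$ and the positivity/continuity conditions of \textbf{Q}, then invoke a ULLN for the $\rho$-mixing stationary sequence of \textbf{Z} (the class $\{x_t(\gamma)\}$ is VC because of the monotone indicator) to get $\sup_\Theta|S_n-S|=o_p(1)$, and conclude with the argmin theorem.

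The critical step is the curvature analysis. Under \textbf{C}, $y_t - \alpha_0'x_t(\gamma_0)=\varepsilon_t$ with $E[\varepsilon_t\mid x_t,q_t]=0$, so
\[
S(\alpha,\gamma)-\sigma^2 \;=\; E[D_t(\alpha,\gamma)^2], \qquad D_t:=\alpha'x_t(\gamma)-\alpha_0'x_t(\gamma_0).
\]
With $u:=\alpha-\alpha_0$, the ``naive'' choice $u=0$ confines the mismatch to the shell $\{q_t\in(\gamma\wedge\gamma_0,\gamma\vee\gamma_0]\}$, where the kink contribution $\delta_{30}(q_t-\gamma_0)$ has magnitude $O(|\gamma-\gamma_0|)$ on a set of probability $O(|\gamma-\gamma_0|)$, giving $E[D_t^2]\big|_{u=0} = \tfrac{\delta_{30}^2 f(\gamma_0)}{3}|\gamma-\gamma_0|^3 + o(|\gamma-\gamma_0|^3)$. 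For general $u$, $E[D_t^2] = u'M(\gamma)u - 2u'c(\gamma) + E[D_t^2]\big|_{u=0}$, and each coordinate of $c(\gamma)$ is an integral of a bounded random variable against $\delta_{30}(q_t-\gamma_0)$ over the shell, hence $c(\gamma)=O(|\gamma-\gamma_0|^2)$. Minimizing over $u$ yields $\alpha^*(\gamma):=\arg\min_\alpha S(\alpha,\gamma)=\alpha_0 - M(\gamma)^{-1}c(\gamma) = \alpha_0+O(|\gamma-\gamma_0|^2)$, and the induced improvement $-c'M^{-1}c$ is merely $O(|\gamma-\gamma_0|^4)$; hence $S(\gamma)-S(\gamma_0)\asymp|\gamma-\gamma_0|^3$.

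Turning to the empirical side, let $R_n(\gamma):=[S_n(\gamma)-S_n(\gamma_0)]-[S(\gamma)-S(\gamma_0)]$. Uniform linear-regression control (uniform convergence of $X(\gamma)'X(\gamma)/n$ and $X(\gamma)'Y/n$) gives $\widehat{\alpha}(\gamma)-\alpha^*(\gamma)=O_p(n^{-1/2})$ over a neighborhood of $\gamma_0$, and an envelope expansion yields $S_n(\gamma)=S_n(\alpha^*(\gamma),\gamma)+O_p(n^{-1})$ uniformly. The dominant stochastic part of $R_n(\gamma)$ is then $-2n^{-1}\sum_t\varepsilon_t D_t(\alpha^*(\gamma),\gamma)$, and because $\|D_t\|_{L^2}^2=O(|\gamma-\gamma_0|^3)$, a maximal inequality for $\rho$-mixing sums over the parametric class indexed by $\gamma$ delivers
\[
\sup_{|\gamma-\gamma_0|\leq\delta}|R_n(\gamma)| \;=\; O_p(n^{-1/2}\delta^{3/2}).
\]
A standard peeling/shell argument (cf.\ van der Vaart--Wellner, Theorem 3.2.5) balances $r^3\asymp n^{-1/2}r^{3/2}$ to give $r\asymp n^{-1/3}$, and hence $\widehat{\gamma}-\gamma_0=O_p(n^{-1/3})$.

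For $\widehat{\alpha}$, decompose
\[
\widehat{\alpha}-\alpha_0 \;=\; [\widehat{\alpha}(\widehat{\gamma})-\alpha^*(\widehat{\gamma})] + [\alpha^*(\widehat{\gamma})-\alpha_0] \;=\; O_p(n^{-1/2}) + O_p(n^{-2/3}) \;=\; O_p(n^{-1/2}),
\]
the first bracket by the uniform LSE bound, the second by the quadratic smoothness $\alpha^*(\gamma)-\alpha_0=O(|\gamma-\gamma_0|^2)$ combined with the rate just obtained. I expect the main obstacle to be verifying that the cubic curvature of $S(\gamma)$ genuinely survives optimization over $\alpha$: one must confirm that $c(\gamma)$ is truly $O(|\gamma-\gamma_0|^2)$---not smaller through some unexpected cancellation---so that the correction $-c'M^{-1}c$ is merely $O(|\gamma-\gamma_0|^4)$ and cannot dent the cubic leading term. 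A secondary difficulty is delivering the modulus bound $O_p(n^{-1/2}\delta^{3/2})$ for $\rho$-mixing data under the finite-order moment conditions of \textbf{Z}, which precludes off-the-shelf i.i.d.\ empirical process tools and requires a maximal inequality of the Doukhan or Rio type tailored to the parametric class $\{\varepsilon_t D_t(\alpha^*(\gamma),\gamma):\gamma\in\Gamma\}$.
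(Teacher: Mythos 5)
Your plan is sound and is, at bottom, the same mechanism the paper uses: the population criterion has \emph{cubic} local curvature in $\gamma$ under Assumption \textbf{C}, the centered process has a modulus of continuity of order $n^{-1/2}\delta^{3/2}$, and a peeling argument balances the two to give $n^{-1/3}$. The $\rho$-mixing maximal inequality you flag as the ``secondary difficulty'' is exactly the paper's Lemma~\ref{lem:maxineq2}(b), which with $\gamma'=\gamma_0$, $j=1$ gives $E\sup_{\gamma_0<\gamma<\gamma_0+\epsilon}|J_{1n}(\gamma_0,\gamma)|\le C\epsilon^{3/2}$; this is the engine of the paper's shell bound $\Pr\{\ldots\}\le C^{-1}2^{-3k/2}$. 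Where you genuinely depart from the paper is the organization: the paper keeps the joint criterion, decomposes $\mathbb{S}_n(\theta)-\mathbb{S}_n(\theta_0)$ orthogonally into pieces supported on $\{q_t>\gamma\}$, $\{q_t\le\gamma_0\}$, $\{\gamma_0<q_t<\gamma\}$, and peels simultaneously in $(\upsilon,\beta,\gamma)$; you profile out $\alpha$, read the cubic off the profiled population loss, peel only in $\gamma$, and then recover the $\widehat\alpha$ rate from $\alpha^*(\gamma)-\alpha_0=O(|\gamma-\gamma_0|^2)$ (true: the cross term $c(\gamma)$ integrates an $O(|\gamma-\gamma_0|)$ integrand over a shell of length $O(|\gamma-\gamma_0|)$, so $M^{-1}c(\gamma)=O(|\gamma-\gamma_0|^2)$ and the profiled correction $-c'M^{-1}c=O(|\gamma-\gamma_0|^4)$ cannot spoil the cubic). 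Your profiling route is cleaner for the $\widehat\alpha$ rate but imposes one extra bookkeeping burden: when $|\gamma-\gamma_0|\asymp n^{-1/3}$ the $O_p(n^{-1})$ gap between $\mathbb{S}_n(\gamma)$ and $\mathbb{S}_n(\alpha^*(\gamma),\gamma)$ is of the \emph{same} order as the cubic curvature, so the peeling constant has to absorb it, whereas the paper's joint decomposition never needs to compare profiled and unprofiled objectives. Also, your stated worry is backwards --- you should be checking that $c(\gamma)$ is not \emph{larger} than $O(|\gamma-\gamma_0|^2)$ (a smaller $c$ only helps); the integral bound settles this.
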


The results of Proposition \ref{Consistency} are surprising because the
convergence rate of $\widehat{\gamma }$ is slower than that of the CLSE $%
\widetilde{\gamma }$, which is known to be $n^{-1/2}$ as shown in the
aforementioned works. That is, using the true restriction on the parameters
leads to a faster rate of convergence of the estimator of $\gamma _{0}$, not
just reducing its asymptotic variance as is often the case.

Next we present the asymptotic distribution of $\widehat{\theta }$.

\begin{theorem}
\label{Th:AD_cube} Let Assumptions \textit{\textbf{C,} \textit{\textbf{Z}}
and \textbf{Q}} hold and $B_{1}\left( \cdot \right) $ and $B_{2}\left( \cdot
\right) $ be two independent standard Brownian motions. Define $W\left(
g\right) :=B_{1}\left( -g\right) \mathbf{1}\left\{ g<0\right\} +B_{2}\left(
g\right) \mathbf{1}\left\{ g>0\right\} $. Then, 
\begin{eqnarray*}
&&n^{1/2}(\widehat{\alpha }-\alpha _{0})\overset{d}{\longrightarrow }%
\mathcal{N}\left( 0,M^{-1}\Omega M^{-1}\right) \\
&&n^{1/3}(\widehat{\gamma }-\gamma _{0})\overset{d}{\longrightarrow }%
\underset{g\in \mathbb{R}}{\func{argmax}}\big(2\delta _{30}\sqrt{\frac{%
\sigma ^{2}\left( \gamma _{0}\right) f\left( \gamma _{0}\right) }{3}}W\left(
g^{3}\right) +\frac{\delta _{30}^{2}}{3}f\left( \gamma _{0}\right)
\left\vert g\right\vert ^{3}\big)\text{,}
\end{eqnarray*}%
where the two limit distributions are independent of each other.
\end{theorem}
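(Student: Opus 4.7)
The plan is to combine a localization argument around $\gamma_{0}$ for the concentrated objective with an $\func{argmin}$ continuous mapping theorem, plus a standard first-order expansion for $\widehat{\alpha}$. I begin by profiling out $\alpha$ via the identity $\mathbb{S}_{n}(\gamma) = \mathbb{S}_{n}^{0}(\gamma) - U_{n}(\gamma)'M_{n}(\gamma)^{-1}U_{n}(\gamma)$, where $\mathbb{S}_{n}^{0}(\gamma) = n^{-1}\sum(y_{t}-\alpha_{0}'x_{t}(\gamma))^{2}$, $M_{n}(\gamma) = n^{-1}X(\gamma)'X(\gamma)$, and $U_{n}(\gamma) = n^{-1}X(\gamma)'u(\gamma)$ with $u_{t}(\gamma) = y_{t}-\alpha_{0}'x_{t}(\gamma) = \varepsilon_{t}+d_{t}(\gamma)$. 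The crucial simplification from Assumption \textbf{C} is that $d_{t}(\gamma) := \alpha_{0}'(x_{t}(\gamma_{0})-x_{t}(\gamma)) = \delta_{30}(q_{t}-\gamma_{0})\bigl[\mathbf{1}\{q_{t}>\gamma_{0}\}-\mathbf{1}\{q_{t}>\gamma\}\bigr]$, which is supported on the shrinking interval between $\gamma$ and $\gamma_{0}$ and is of size $|\delta_{30}||\gamma-\gamma_{0}|$ there; hence $n[\mathbb{S}_{n}^{0}(\gamma)-\mathbb{S}_{n}^{0}(\gamma_{0})] = \sum_{t} d_{t}(\gamma)^{2} + 2\sum_{t}\varepsilon_{t}d_{t}(\gamma)$.

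Next I specialize to the local scale $\gamma=\gamma_{0}+g/n^{1/3}$ and prove process convergence on compact $g$-sets. For the drift, the empirical average of $d_{t}^{2}$ and Assumption \textbf{Q} yield $\sum_{t} d_{t}(\gamma_{0}+g/n^{1/3})^{2} \to_{p} \tfrac{\delta_{30}^{2}f(\gamma_{0})}{3}|g|^{3}$. For the noise, $g\mapsto 2\sum_{t}\varepsilon_{t}d_{t}(\gamma_{0}+g/n^{1/3})$ is a sum over observations falling in a one-sided shrinking interval with variance $\tfrac{4}{3}\delta_{30}^{2}\sigma^{2}(\gamma_{0})f(\gamma_{0})|g|^{3}$, and Assumption \textbf{Z} (martingale-difference errors, $\rho$-mixing, moment bounds) yields a functional CLT to $2\delta_{30}\sqrt{\sigma^{2}(\gamma_{0})f(\gamma_{0})/3}\,W(g^{3})$, with the two independent Brownian motions arising from the disjoint left and right intervals. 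I then verify that the profiling correction is negligible: since $U_{n}(\gamma_{0})=O_{p}(n^{-1/2})$, $M_{n}(\gamma)-M_{n}(\gamma_{0})=O_{p}(|g|/n^{1/3})$, and $U_{n}(\gamma)-U_{n}(\gamma_{0})=O_{p}(|g|^{1/2}/n^{2/3})+O_{p}(|g|^{2}/n^{2/3})$ uniformly on compact $g$-sets, one obtains $n\bigl[U_{n}(\gamma)'M_{n}(\gamma)^{-1}U_{n}(\gamma)-U_{n}(\gamma_{0})'M_{n}(\gamma_{0})^{-1}U_{n}(\gamma_{0})\bigr]=o_{p}(1)$. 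Combining gives
\[
n\bigl[\mathbb{S}_{n}(\gamma_{0}+g/n^{1/3})-\mathbb{S}_{n}(\gamma_{0})\bigr] \Rightarrow \tfrac{\delta_{30}^{2}f(\gamma_{0})}{3}|g|^{3} + 2\delta_{30}\sqrt{\tfrac{\sigma^{2}(\gamma_{0})f(\gamma_{0})}{3}}W(g^{3}),
\]
and the $\func{argmin}$ continuous mapping theorem (Kim and Pollard, 1990), together with the tightness $n^{1/3}(\widehat{\gamma}-\gamma_{0})=O_{p}(1)$ from Proposition \ref{Consistency}, delivers the stated limit law for $\widehat{\gamma}$.

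For $\widehat{\alpha}$, I use $\widehat{\alpha}-\alpha_{0} = M_{n}(\widehat{\gamma})^{-1}U_{n}(\widehat{\gamma})$. Plugging $\gamma=\widehat{\gamma}$ into the bounds above and using $\widehat{\gamma}-\gamma_{0}=O_{p}(n^{-1/3})$ gives $M_{n}(\widehat{\gamma})\to_{p} M$ and $U_{n}(\widehat{\gamma}) = n^{-1}\sum \mathbf{x_{t}}\varepsilon_{t}+o_{p}(n^{-1/2})$, so the standard martingale-difference CLT from Assumption \textbf{Z} yields $n^{1/2}(\widehat{\alpha}-\alpha_{0})\Rightarrow \mathcal{N}(0,M^{-1}\Omega M^{-1})$. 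Asymptotic independence follows because the limit for $n^{1/3}(\widehat{\gamma}-\gamma_{0})$ is determined only by fluctuations of the local score driven by observations whose $q_{t}$ lies in a shrinking $O(n^{-1/3})$-neighborhood of $\gamma_{0}$, and the contribution of those same observations to the global sum $n^{-1/2}\sum \mathbf{x_{t}}\varepsilon_{t}$ is $O_{p}(n^{-1/6})=o_{p}(1)$; thus the joint limit factors.

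The main obstacle is the functional CLT and tightness for the local score process $g\mapsto 2\sum_{t}\varepsilon_{t}d_{t}(\gamma_{0}+g/n^{1/3})$ on a compact interval, which is a cube-root-type empirical process in the style of Kim and Pollard but with dependent data; maximal inequalities under $\rho$-mixing need to be invoked rather than the usual i.i.d. bracketing arguments, and one must also rule out multiple maximizers of the Gaussian-plus-drift limit to justify the continuous mapping step. The uniform control on compacts of the profiling correction $U_{n}(\gamma)'M_{n}(\gamma)^{-1}U_{n}(\gamma)$ is conceptually straightforward but bookkeeping-heavy, the key leverage being that every relevant difference carries an extra factor $|\gamma-\gamma_{0}|$ inherited from the continuity constraint in Assumption \textbf{C}.
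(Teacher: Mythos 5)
Your proposal is correct and arrives at the same local-expansion analysis as the paper, but the bookkeeping route is different: you profile $\alpha$ out first, writing $\mathbb{S}_{n}(\gamma)=\mathbb{S}_{n}^{0}(\gamma)-U_{n}(\gamma)'M_{n}(\gamma)^{-1}U_{n}(\gamma)$ and then bounding the change in the profiling correction uniformly on compact $g$-sets, whereas the paper parameterizes $(\alpha,\gamma)$ jointly as $(\alpha_{0}+h/n^{1/2},\gamma_{0}+g/n^{1/3})$ and shows directly that the localized criterion $\mathbb{G}_{n}(h,g)$ separates into a quadratic-in-$h$ piece $\widetilde{\mathbb{G}}_{n}^{1}(h)$ and the threshold piece $\widetilde{\mathbb{G}}_{n}^{2}(g)$ up to a uniformly $o_{p}(1)$ remainder. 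The two routes are algebraically equivalent: the paper's separability statement implies your profiling-correction bound, since $\min_{h}\mathbb{G}_{n}(h,g)-\min_{h}\mathbb{G}_{n}(h,0)=\widetilde{\mathbb{G}}_{n}^{2}(g)+o_{p}(1)$, while your direct control of $U_{n}(\gamma)-U_{n}(\gamma_{0})=O_{p}(|g|^{1/2}n^{-2/3})+O_{p}(|g|^{2}n^{-2/3})$ achieves the same thing without introducing the joint local coordinates. What the joint parameterization buys is a transparent display of the asymptotic independence (the two pieces of the limit criterion are driven by different empirical processes with vanishing cross-covariance); your profiling route instead obtains independence from the localization observation that the threshold limit is determined by $O(n^{2/3})$ observations whose contribution to $n^{-1/2}\sum\mathbf{x}_{t}\varepsilon_{t}$ is $O_{p}(n^{-1/6})$, which is precisely the covariance calculation the paper invokes at the end. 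The drift computation $\tfrac{\delta_{30}^{2}}{3}f(\gamma_{0})|g|^{3}$, the variance $\tfrac{4}{3}\delta_{30}^{2}\sigma^{2}(\gamma_{0})f(\gamma_{0})|g|^{3}$ giving the $W(g^{3})$ limit, and the reliance on Proposition \ref{Consistency} for tightness all match the paper; you correctly flag that the FCLT/tightness for the localized score under $\rho$-mixing must come from the paper's maximal inequalities rather than an i.i.d.\ bracketing argument (Lemma \ref{lem:maxineq2} and Remark \ref{rem:tight} supply exactly this). No genuine gap.
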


The asymptotic independence is a consequence of the different convergence
rates between the two sets of estimators $\widehat{\alpha }$ and $\widehat{%
\gamma }$ by similar arguments as in Chan $\left( 1993\right) $, albeit the
rate for $\widehat{\gamma }$ being slower than that for $\widehat{\alpha }$
in our case. The asymptotic independence does not hold for the CLSE $%
\widetilde{\gamma }$ and $\widetilde{\alpha }$, which converge at the same
rate as mentioned above and they are jointly asymptotically normal with a
non-diagonal variance covariance matrix.

Theorem \ref{Th:AD_cube} suggests that Gonzalo and Wolf's $(2005)$
subsampling procedure would be correct if they had used the normalization $%
n^{1/3}$ instead of the incorrect one $n^{1/2}$. On the other hand, it is
worth mentioning that Seo and Linton $\left( 2007\right) $ considered the
smoothed least squares estimator for the same setup. The convergence rate
for their smoothed least squares estimator for $\gamma $ was slower than our
cube-root rate under their assumptions for the smoothing parameter.

\begin{remark}
\label{Rem:Heuristic}We now present a heuristic discussion to illustrate why
the constrained and unconstrained estimators of $\gamma _{0}$ have different
rates of convergence and the unconstrained estimator belongs to the
cube-root class explored by Kim and Pollard $\left( 1990\right) $ for the $%
i.i.d.$ data and Seo and Otsu $\left( 2018\right) $ for more general setups.
For simplicity of illustration, we begin with a simplified model, where $%
x_{t}=\left( 1,q_{t}\right) ^{\prime }$, $\delta =\left( \delta _{1},\delta
_{3}\right) ^{\prime }$, $\beta $ is fixed at $\beta _{0}=0$, and thus $%
\theta =\left( \delta ^{\prime },\gamma \right) ^{\prime }$. In addition we
shall assume $\gamma _{0}=0$ and thus $\delta _{10}=0$ by (\ref{eq:conti})
without loss of generality since we can always rename the variable $%
q_{t}-\gamma _{0}$ as $q_{t}$. It is well known that the rates of
convergence of an M-estimator is governed by the local behavior of its
criterion function around the true value provided that the estimator is
consistent. Then the convergence rate of LSE $\widehat{\theta }=\left( 
\widehat{\delta }^{\prime },\widehat{\gamma }\right) ^{\prime }$ is
determined by the stochastic expansion of 
\begin{eqnarray}
&&{{{\mathbb{S}}}}_{n}(\theta )-{{{\mathbb{S}}}}_{n}(\theta _{0})
\label{s_1} \\
&=&\frac{1}{n}\sum_{t=1}^{n}\left( \delta _{30}q_{t}\mathbf{1}_{t}\left(
0\right) -\left( \delta _{1}+\delta _{3}q_{t}\right) \mathbf{1}_{t}\left(
\gamma \right) \right) ^{2}+\frac{2}{n}\sum_{t=1}^{n}\varepsilon _{t}\left(
\delta _{30}q_{t}\mathbf{1}_{t}\left( 0\right) -\left( \delta _{1}+\delta
_{3}q_{t}\right) \mathbf{\ 1}_{t}\left( \gamma \right) \right) \text{,} 
\notag
\end{eqnarray}%
in small neighborhoods of $\delta =\delta _{0}$ and $\gamma =\gamma _{0}=0$%
. Consider $\gamma >0$. The case of $\gamma <0$ is handled similarly. Then,
as $\mathbf{1}_{t}\left( 0\right) =\mathbf{1}_{t}\left( \gamma \right) +%
\mathbf{1}\left\{ 0<q_{t}\leq \gamma \right\} $ and $\mathbf{1}_{t}\left(
\gamma \right) \mathbf{1}\left\{ 0<q_{t}\leq \gamma \right\} =0$,%
\begin{eqnarray*}
&&E\left( \delta _{30}q_{t}\mathbf{1}_{t}\left( 0\right) -\left( \delta
_{1}+\delta _{3}q_{t}\right) \mathbf{1}_{t}\left( \gamma \right) \right) ^{2}
\\
&=&E\left( \delta _{1}+\left( \delta _{3}-\delta _{30}\right) q_{t}\right)
^{2}\mathbf{1}_{t}\left( \gamma \right) +E\left( \delta _{30}q_{t}\right)
^{2}\mathbf{1}\left\{ 0<q_{t}\leq \gamma \right\} \\
&\sim &\left\Vert \delta -\delta _{0}\right\Vert ^{2}+\gamma ^{3},
\end{eqnarray*}%
because for some positive constant $c$, 
\begin{equation*}
E\left[ q_{t}^{2}\mathbf{1}\left\{ 0<q_{t}\leq \gamma \right\} \right]
=\int_{0}^{\gamma }q^{2}f\left( q\right) dq\sim \frac{c}{3}\left\vert \gamma
\right\vert ^{3}
\end{equation*}%
due to Assumption \textbf{Q}. This cubic approximation at $\gamma =\gamma
_{0}$ is non-standard and invalidates the asymptotic normality of $\widehat{%
\gamma }$, which builds on the quadratic approximation.\footnote{%
This also shows that the asymptotic variance formula $U^{-1}VU^{-1}$ in
Gonzalo and Wolf's (2005) Theorem A.1 and Remark A.1 is not properly defined
due to the degeneracy of $U$, where $U$ is the second derivative matrix of
the expected criterion function that is evaluated under the continuity
restriction.} Similarly, 
\begin{equation*}
\func{var}\left( \frac{1}{n}\sum_{t=1}^{n}\varepsilon _{t}\left( \delta
_{30}q_{t}\mathbf{1}_{t}\left( 0\right) -\left( \delta _{1}+\delta
_{3}q_{t}\right) \mathbf{1}_{t}\left( \gamma \right) \right) \right) \sim 
\frac{\left\Vert \delta -\delta _{0}\right\Vert ^{2}+\left\vert \gamma
\right\vert ^{3}}{n}\text{.}
\end{equation*}%
Thus, the last two displayed expressions suggest that 
\begin{equation*}
\widehat{\delta }-\delta _{0}=O_{p}\left( n^{-1/2}\right) \ \ \ \text{and}\
\ \widehat{\gamma }=O_{p}\left( n^{-1/3}\right) \text{,}
\end{equation*}%
as these rates of convergence balance the speeds at which the bias and
standard deviation of ${{{\mathbb{S}}}}_{n}\left( \theta \right) -{{{\mathbb{%
S}}}}_{n}\left( \theta _{0}\right) $ converge to zero. In comparison, the
CLSE $\left( \widetilde{\delta }_{3},\widetilde{\gamma }\right) ^{\prime }$
is ruled by 
\begin{eqnarray*}
&&\mathbb{S}_{n}(\theta )-\mathbb{S}_{n}(\theta _{0}) \\
&=&\frac{1}{n}\sum_{t=1}^{n}\left( \delta _{30}q_{t}\mathbf{1}_{t}\left(
0\right) -\delta _{3}\left( q_{t}-\gamma \right) \mathbf{1}_{t}\left( \gamma
\right) \right) ^{2}+\frac{2}{n}\sum_{t=1}^{n}\varepsilon _{t}\left( \delta
_{30}q_{t}\mathbf{1}_{t}\left( 0\right) -\delta _{3}\left( q_{t}-\gamma
\right) \mathbf{1}_{t}\left( \gamma \right) \right) \text{,}
\end{eqnarray*}%
due to the continuity constraint (\ref{eq:conti}), for which we observe the
quadratic expansion 
\begin{eqnarray*}
E\left( \delta _{30}q_{t}\mathbf{1}_{t}\left( 0\right) -\delta _{3}\left(
q_{t}-\gamma \right) \mathbf{1}_{t}\left( \gamma \right) \right) ^{2} &\sim
&\left\vert \delta _{3}-\delta _{30}\right\vert ^{2}+\gamma ^{2} \\
\func{var}\left( \frac{2}{n}\sum_{t=1}^{n}\varepsilon _{t}\left( \delta
_{30}q_{t}\mathbf{1}_{t}\left( 0\right) -\delta _{3}\left( q_{t}-\gamma
\right) \mathbf{1}_{t}\left( \gamma \right) \right) \right) &\sim &\frac{%
\left\vert \delta _{3}-\delta _{30}\right\vert ^{2}+\left\vert \gamma
\right\vert ^{2}}{n}\text{.}
\end{eqnarray*}%
This yields that 
\begin{equation*}
\widetilde{\delta }_{3}-\delta _{30}=O_{p}\left( n^{-1/2}\right) \ \ \ \text{
and\ }\ \ \widetilde{\gamma }=O_{p}\left( n^{-1/2}\right) \text{,}
\end{equation*}%
which coincides with the rates of convergence that both Feder $(1975$a, b)
and Chan and Tsay $(1998)$ obtained.
\end{remark}

An intuitive explanation for the preceding Proposition, Theorem, and Remark
is to appeal to \textquotedblleft misspecification\textquotedblright .
Although the unconstrained model (\ref{eq:model}) encompasses both
continuous and discontinuous models, the estimated regression function is
almost surely discontinuous, since the probability that the LSE $\widehat{%
\theta}$ fulfills the continuity restriction is zero.

%

\subsection{Inference on Regression Coefficient $\protect\alpha $}

Theorem \ref{Th:AD_cube} in Section 3.1, Lemma A.12 of Hansen $\left(
2000\right) $ and Theorem 2 of Chan $\left( 1993\right) $ report the same
asymptotic distribution for $\widehat{\alpha },$ namely $\mathcal{N}\left(
0,M^{-1}\Omega M^{-1}\right) $, which is asymptotically independent of $%
\widehat{\gamma }$. Thus, the inference for $\alpha _{0}$ is uniform under
any widely used sampling scheme with strongly identified $\gamma _{0}$,
provided that the respective sample moments 
\begin{equation*}
\widehat{M}=\frac{1}{n}\sum_{t=1}^{n}x_{t}\left( \widehat{\gamma }\right)
x_{t}\left( \widehat{\gamma }\right) ^{\prime }\text{; \ }\ \ \widehat{%
\Omega }=\frac{1}{n}\sum_{t=1}^{n}x_{t}\left( \widehat{\gamma }\right)
x_{t}\left( \widehat{\gamma }\right) ^{\prime }\widehat{\varepsilon }%
_{t}^{2},
\end{equation*}%
where $\widehat{\varepsilon }_{t}=y_{t}-x_{t}\left( \widehat{\gamma }\right)
^{\prime }\widehat{\alpha }$, are consistent under each data generating
process. This is the case due to the uniform law of large numbers, which
only requires consistency of $\widehat{\gamma }$.

It is worthwhile to mention that this \textquotedblleft
oracle\textquotedblright\ property of $\widehat{\alpha }$ does not hold true
for the CLSE $\widetilde{\alpha }$, whose asymptotic distribution is
affected by that of $\widetilde{\gamma }$, as was first noticed and shown by
Feder $(1975a)$ and later extended to time series data by Chan and Tsay $%
(1998)$.

\subsection{\textbf{Inference on Threshold }$\protect\gamma $}

\label{sec:UI for thr}

The main purpose of this section is to develop a method to construct
confidence regions for $\gamma_{0} $ that is valid regardless of whether the
regression model has a kink or a jump at the true value of $\gamma_{0} $.
Conventionally, inference on $\gamma $ has been done after assuming either
that the model has a kink or that it has a jump, i.e. the practitioner
chooses between jump or kink models before estimating the threshold point.
More specifically, if we decide that the model has a jump, then one follows
e.g. Hansen $\left( 2000\right) $, whereas if one has chosen the kink model
then one needs to employ the asymptotic normal inference as in Feder $\left(
1975a\right) $ and others. One of our findings is that Hansen $\left(
2000\right) $ results are not valid if the model had a kink and likewise
Feder's results are not valid if the model had a jump.

Thus, this section develops robust confidence regions that are valid
regardless which of the two models is the true specification. To ease
reference, we recall Hansen's (2000) diminishing jump specification:

\begin{AsmJ}
For some $0<\varphi <1/2$ and $d\neq 0$, $\delta _{0}=d\cdot n^{-\varphi }$
and $d^{\prime }Vd\ $and $d^{\prime }Dd$ are positive for all $n$.
\end{AsmJ}

When $\varphi $ is greater than or equal to $1/2$, $\delta _{0}$ is too
small to consistently estimate $\gamma _{0}$, and such case is excluded. And
we suppress the dependence of $\delta _{0}$ on the sample size $n$ to
simplify the notation.

To develop robust confidence sets, we need to find a statistic whose
asymptotic distribution is invariant to the true parameter value, that is, a
statistic whose asymptotic distribution does not change suddenly under
Assumption \textbf{C}. We begin by introducing a Gaussian quasi-likelihood
ratio statistic based on the unconstrained model $\left( \ref{eq:model}%
\right) $. Specifically, let%
\begin{equation*}
QLR_{n}=n\frac{\widehat{\mathbb{S}}_{n}\left( \gamma _{0}\right) -\widehat{%
\mathbb{S}}_{n}\left( \widehat{\gamma }\right) }{\widehat{\mathbb{S}}%
_{n}\left( \widehat{\gamma }\right) }\text{,}
\end{equation*}%
where $\widehat{\mathbb{S}}_{n}\left( \gamma \right) $ is defined in $\left( %
\ref{ssngm}\right) $.

We now derive the following asymptotic distribution for $ QLR_n $,
which contrasts with the asymptotic distribution obtained by Hansen $(2000)$
under Assumption \textbf{J}. 

\begin{proposition}
\label{Th:QLR} Suppose that Assumptions \textbf{C,} \textbf{Z} and \textbf{Q}
hold. Then, as $n\rightarrow \infty $, 
\begin{equation*}
QLR_{n}\overset{d}{\longrightarrow }\zeta \max_{g\in \mathbb{R}}\left(
2W\left( g\right) -\left\vert g\right\vert \right) \text{,}
\end{equation*}%
where 
\begin{equation*}
\zeta =\frac{\sigma ^{2}\left( \gamma _{0}\right) }{\sigma ^{2}}\text{.}
\end{equation*}
\end{proposition}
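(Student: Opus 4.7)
The plan is to decompose
$$QLR_n = n\big[\mathbb{S}_n(\gamma_0) - \mathbb{S}_n(\widehat{\gamma})\big] \big/ \mathbb{S}_n(\widehat{\gamma})$$
and handle numerator and denominator separately. The denominator tends to $\sigma^2$ in probability by a uniform law of large numbers applied to residual sums of squares evaluated at the consistent estimator $\widehat{\theta}$ (Proposition \ref{Consistency}) under Assumption \textbf{Z}. All of the work is in the numerator.

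For the numerator, I would use the rate $\widehat{\gamma}-\gamma_0 = O_p(n^{-1/3})$ from Theorem \ref{Th:AD_cube} and localize: set $\gamma = \gamma_0 + g/n^{1/3}$ and study
$$Q_n(g) := n\big[\mathbb{S}_n(\gamma_0)-\mathbb{S}_n(\gamma_0+g/n^{1/3})\big].$$
Mimicking the heuristic in $(\ref{s_1})$ after profiling $\alpha$ out, $Q_n(g)$ splits into a deterministic bias with leading term $-(\delta_{30}^2/3)f(\gamma_0)|g|^3$ and a stochastic cross term asymptotically equivalent to $2\delta_{30}n^{-1/2}\sum_t \varepsilon_t(q_t-\gamma_0)[\mathbf{1}_t(\gamma_0)-\mathbf{1}_t(\gamma_0+g/n^{1/3})]$. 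A cube-root empirical process argument of Kim--Pollard type, adapted to the $\rho$-mixing setting of Assumption \textbf{Z} in the same fashion required for Theorem \ref{Th:AD_cube}, yields weak convergence of the cross term on compacts to the time-changed Brownian motion $2\delta_{30}\sqrt{\sigma^2(\gamma_0)f(\gamma_0)/3}\,W(g^3)$. The profile nuisance $\widehat{\alpha}(\gamma)-\alpha_0 = O_p(n^{-1/2})$ operates on a finer scale than $n^{-1/3}$ and contributes $o_p(1)$ uniformly in $g$ on compacts, so that
$$Q_n(g) \Rightarrow Z(g) := 2\delta_{30}\sqrt{\tfrac{\sigma^2(\gamma_0)f(\gamma_0)}{3}}\,W(g^3) - \tfrac{\delta_{30}^2}{3}f(\gamma_0)|g|^3$$
as processes on compact subsets of $\mathbb{R}$.

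The cubic drift guarantees that $\sup Q_n$ is achieved in a stochastically bounded region, so the continuous mapping theorem for the sup functional gives $\sup_g Q_n(g) \overset{d}{\longrightarrow} \sup_g Z(g)$. I would finish by simplifying $\sup_g Z(g)$. The bijection $u=\operatorname{sign}(g)|g|^3$ preserves $W$ (since $W$ treats the two half-lines symmetrically via $B_1,B_2$) and rewrites $Z$ in the variable $u$ as $2aW(u)-b|u|$ with $a=\delta_{30}\sqrt{\sigma^2(\gamma_0)f(\gamma_0)/3}$ and $b=\delta_{30}^2 f(\gamma_0)/3$; a linear rescaling $u\mapsto(a^2/b^2)v$ combined with the Brownian scaling $W(c\cdot)\overset{d}{=}c^{1/2}W(\cdot)$ collapses this to $(a^2/b)\sup_v(2W(v)-|v|)$, and one computes $a^2/b=\sigma^2(\gamma_0)$. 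Hence the numerator converges to $\sigma^2(\gamma_0)\sup_g(2W(g)-|g|)$; Slutsky together with $\mathbb{S}_n(\widehat{\gamma})\overset{p}{\to}\sigma^2$ yields the claimed limit with $\zeta=\sigma^2(\gamma_0)/\sigma^2$.

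The hard part will be the weak convergence of $Q_n$ on compacts together with the argsup tightness: a non-standard cube-root M-estimation argument under dependence, with an additional profiling step over $\alpha$ that must be shown not to contaminate the $n^{-1/3}$ scale. In practice this reuses the very same empirical-process machinery underlying Theorem \ref{Th:AD_cube}, so once those tools are in hand, the present proposition reduces to the Brownian scaling arithmetic sketched above.
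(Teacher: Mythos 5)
Your proposal is correct and follows essentially the same route as the paper's proof: both use the asymptotic independence (decoupling) of the $\alpha$- and $\gamma$-parts established in the proof of Theorem~\ref{Th:AD_cube} to reduce the numerator to the $\gamma$-localized process $\widetilde{\mathbb{G}}_n^2(g)$, invoke the weak convergence of that process to the time-changed Brownian motion with cubic drift, and finish by the change of variables $u=g^3$ plus the Brownian scaling $W(c\,\cdot)\overset{d}{=}c^{1/2}W(\cdot)$, with the arithmetic $a^2/b=\sigma^2(\gamma_0)$ delivering the scale factor $\zeta$.
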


In comparison, we recall Hansen's (2000) results that
\begin{equation}
QLR_{n}\overset{d}{\longrightarrow }\xi \max_{g\in \mathbb{R}}\left(
2W\left( g\right) -\left\vert g\right\vert \right) \text{,}  \label{hansen_1}
\end{equation}%
where 
\begin{equation*}
\xi =\frac{E\big(\left( x_{t}^{\prime }d\varepsilon _{t}\right)
	^{2}|q_{t}=\gamma _{0}\big)}{\sigma ^{2}E\big(\left( x_{t}^{\prime }d\right)
	^{2}|q_{t}=\gamma _{0}\big)},
\end{equation*}%
and that the distribution function of $\max_{g\in \mathbb{R}}\left(
2W\left( g\right) -\left\vert g\right\vert \right) $ is given by $F\left(
z\right) =\left( 1-e^{-z/2}\right) ^{2}$.

The results of our Proposition \ref{Th:QLR} and that in $\left( \ref{hansen_1}%
\right) $ indicate that the only difference between the limit distributions
of $QLR_n$ under the kink and jump specifications is the scaling factor.
This is the case despite the fact the estimator $\widehat{\gamma }$ exhibits
different rates of convergence across the two settings.

Next, we propose an estimator of the unknown scaling of $QLR_{n}$ that
converges in probability to $\xi $ under Assumption \textbf{J}, while it
converges to $\zeta $ under Assumption \textbf{C}, thus adapting to the
unknown true scaling in each situation. We begin with a natural estimator of 
$\xi $, which is a ratio of two Nadaraya-Watson estimators of the
conditional expectations. That is, 
\begin{equation}
\widehat{\xi }=\frac{\frac{1}{n}\sum_{t=1}^{n}\big(\widehat{\delta }^{\prime
}x_{t}\big)^{2}\widehat{\varepsilon }_{t}^{2}K\left( \frac{q_{t}-\widehat{%
\gamma }}{a}\right) }{\mathbb{S}_{n}\big(\widehat{\theta }\big)\frac{1}{n}%
\sum_{t=1}^{n}\big(\widehat{\delta }^{\prime }x_{t}\big)^{2}K\left( \frac{%
q_{t}-\widehat{\gamma }}{a}\right) }\text{,}  \label{xhihat}
\end{equation}%
where $K\left( \cdot \right) $ and $a$ are, respectively, the kernel
function and bandwidth parameter and $\widehat{\varepsilon }_{t}$'s are the
least squares residuals. The consistency of $\widehat{\xi}$ to $\xi $ is
standard, as argued in Hansen $(2000)$.

However, it is not trivial to establish that $\widehat{\xi }\overset{p}{%
\longrightarrow }\zeta $ when the true model has a kink at $\gamma_{0} $
because both numerator and denominator degenerates asymptotically in
Assumption \textbf{C}. It turns out that we need to impose some
unconventional restrictions on the kernel function $K$ and the bandwidth $a$%
. Specifically, we assume

\begin{AsmK}
Assume the following for $K\left( \cdot \right) $ and $a.$
\end{AsmK}

\begin{description}
\item[$\mathbf{K1}$] $K\left( \cdot \right) $ is symmetric and $\kappa
_{\ell }=\int_{-\infty }^{\infty }u^{\ell }K\left( u\right) du<C$ for $\ell
\leq 4$ and $\kappa _{2}\neq 0$.

\item[$\mathbf{K2}$] $K\left( \cdot \right) $ is twice continuously
differentiable with the first derivative $K^{\prime }\left( \cdot \right) $
and for all $u\ $such that $\left\vert w/u\right\vert \leq C$ as $%
w\rightarrow 0$ $K^{\prime }\left( u+w\right) /K^{\prime }\left( u\right)
\rightarrow 1$.

\item[$\mathbf{K3}$] $K\left( u\right) =\int \phi \left( v\right) e^{ivu}dv$
, where the characteristic function $\phi \left( v\right) $ satisfies that $%
v\phi \left( v\right) $ is integrable.

\item[$\mathbf{K4}$] $a^{-3}n^{-1}+a\rightarrow 0$ as $n\rightarrow \infty $.
\end{description}

It is clear that the Epanechnikov and the Gaussian kernel functions satisfy $%
\mathbf{K1}$, $\mathbf{K2}$ and $\mathbf{K3}$. One important observation is
that $\mathbf{K1}$ rules out higher-order kernels by assuming $\kappa
_{2}\neq 0$. The consequence of dropping the assumption that $\kappa
_{2}\not=0$ is discussed in detail in Remark \ref{Rem_k2} that follows the
next proposition.

\begin{proposition}
\label{Prop:xihat}Suppose Assumptions \textbf{Z}, \textbf{Q} and \textbf{K}
hold true. Then, under Assumption \textbf{C}%
\begin{equation*}
\widehat{\xi }\overset{P}{\rightarrow }\zeta \text{,}
\end{equation*}%
while $\widehat{\xi }\overset{P}{\rightarrow }\xi $ under Assumption \textbf{%
J}.
\end{proposition}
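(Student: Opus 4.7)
The plan is to decompose $\widehat{\xi} = R_n / \mathbb{S}_n(\widehat{\theta})$, where $R_n$ is the ratio of the two kernel-weighted sums in $(\ref{xhihat})$. By standard consistency of the unconstrained LSE, $\mathbb{S}_n(\widehat{\theta}) \overset{P}{\to} \sigma^2$, so the task reduces to identifying the probability limit of $R_n$: it should equal $\sigma^2 \xi$ under Assumption \textbf{J} and $\sigma^2(\gamma_0)$ under Assumption \textbf{C}.

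Under Assumption \textbf{J}, $\widehat{\delta}$ concentrates at $\delta_n = c n^{-\varphi}$, and factoring the scalar $n^{-2\varphi}$ out of $(\widehat{\delta}' x_t)^2$ cancels between the numerator and denominator of $R_n$. Standard Nadaraya--Watson kernel regression arguments then yield $R_n \overset{P}{\to} E[(c'x_t)^2 \varepsilon_t^2 \mid q_t = \gamma_0] / E[(c'x_t)^2 \mid q_t = \gamma_0]$, so that $\widehat{\xi} \to \xi$; this is essentially Hansen's $(2000)$ argument.

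Under Assumption \textbf{C} the situation is more delicate, since $\delta_0' x_t = \delta_{30}(q_t - \gamma_0)$ vanishes at $q_t = \gamma_0$ and both numerator and denominator of $R_n$ degenerate. The key computation, via the substitution $u = (q - \gamma_0)/a$ and the symmetry of $K$, gives
\begin{equation*}
E\big[(\delta_0' x_t)^2 K((q_t - \gamma_0)/a)\big] = a^3 \delta_{30}^2 f(\gamma_0) \kappa_2 + o(a^3),
\end{equation*}
and exactly the same calculation with an extra $\sigma^2(q)$ factor shows that the numerator population moment equals $a^3 \delta_{30}^2 f(\gamma_0) \sigma^2(\gamma_0) \kappa_2 + o(a^3)$. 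The common $a^3 \delta_{30}^2 f(\gamma_0) \kappa_2$ factor cancels in the ratio, leaving $\sigma^2(\gamma_0)$, so that $\widehat{\xi} \to \sigma^2(\gamma_0)/\sigma^2 = \zeta$ after division by $\mathbb{S}_n(\widehat{\theta})$. This is the L'H\^opital-type cancellation flagged in the introduction, and it is precisely because the cancellation runs through $\kappa_2$ that Assumption \textbf{K1} must rule out higher-order kernels: if $\kappa_2 = 0$ the $a^3$-terms vanish and one would have to go to the next-order expansion, where the leading coefficient need not reduce to the clean ratio $\sigma^2(\gamma_0)$.

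The principal obstacle is showing that the substitution errors from replacing $(\delta_0, \gamma_0, \varepsilon_t)$ by $(\widehat{\delta}, \widehat{\gamma}, \widehat{\varepsilon}_t)$ in the kernel-weighted sums are of order $o_p(a^3)$ under Assumption \textbf{C}, so that they do not contaminate the leading terms. Since $\widehat{\gamma} - \gamma_0 = O_p(n^{-1/3})$ by Proposition \ref{Consistency} and \textbf{K4} forces $a \gg n^{-1/3}$, the shift in the kernel argument is $o_p(1)$ on the $a$-scale; a first-order expansion of $K$ together with the slow-variation property of $K'$ in \textbf{K2} bounds the induced error, while \textbf{K3} supplies a Fourier-inversion representation convenient for controlling the random fluctuations of the sums about their $a^3$-scaled means. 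The error from $\widehat{\delta} - \delta_0 = O_p(n^{-1/2})$ enters through $(\widehat{\delta}'x_t)^2 - (\delta_0'x_t)^2$, which decomposes into a cross term linear in $(q_t - \gamma_0)$ and a pure quadratic term of order $n^{-1}$; both, after integration against the kernel, are $o(a^3)$ under \textbf{K4}. Finally, $\widehat{\varepsilon}_t^2 - \varepsilon_t^2$ is expanded in $\widehat{\alpha} - \alpha_0 = O_p(n^{-1/2})$ and handled using the fourth-moment conditions in \textbf{Z}.
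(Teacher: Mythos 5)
Your proposal is correct and follows essentially the same approach as the paper's proof: you isolate $\mathbb{S}_n(\widehat{\theta})\to\sigma^2$, reduce to the ratio of the two kernel sums, identify the L'H\^opital-type cancellation of the common $a^3\kappa_2\delta_{30}^2 f(\gamma_0)$ factor under Assumption \textbf{C}, and then control the substitution errors from $(\widehat{\gamma},\widehat{\delta},\widehat{\varepsilon}_t)$ using \textbf{K2}, \textbf{K3}, and \textbf{K4} in exactly the way the paper's auxiliary Lemmas do. Your order-of-magnitude accounting (cross term $O_p(n^{-1/2}a^2)$, quadratic term $O_p(n^{-1}a)$, kernel-argument shift $o_p(1)$ on the $a$-scale since $n^{-1/3}=o(a)$) reproduces the paper's expansions of $\widehat{\varepsilon}_t^2$ and $(\widehat{\delta}'x_t)^2$ and correctly uses \textbf{K4} to render all of them $o(a^3)$.
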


\begin{remark}
\label{Rem_k2} We now comment on the consequence of dropping the assumption
that $\kappa _{2}\not=0$. If we allowed for higher-order kernels, that is $%
\kappa _{2}=0$ and $\kappa _{3}=0$ but $\kappa _{4}\neq 0$, $\widehat{\xi }$
would not be consistent. Indeed, Proposition \ref{Prop:xihat} and Lemma \ref%
{lem:4proposition}\ in the Appendix indicate that, without loss of
generality for $\gamma _{0}=0$ and $\sigma ^{2}=1$, $\widehat{\xi }$
converges in probability to 
\begin{equation*}
\frac{\frac{\partial ^{2}}{\partial q^{2}}f\left( q\right) g_{0}\left(
q\right) \mid _{q=0}}{\frac{\partial ^{2}}{\partial q^{2}}f\left( q\right)
g_{0}^{\ast }\left( q\right) \mid _{q=0}}\text{,}
\end{equation*}%
where $g_{r}\left( q\right) =E\left( x_{t2}^{r}\varepsilon _{t}^{2}\mid
q_{t}=q\right) \ $and $g_{r}^{\ast }\left( q\right) =E\left( x_{t2}^{r}\mid
q_{t}=q\right) $. This is the case because dropping in $\mathbf{K1}$ the
assumption of $\kappa _{2}\not=0$ and letting $\kappa _{2}=\kappa _{3}=0$,
the numerator in $\left( \ref{xhihat}\right) $ will be 
\begin{equation*}
\kappa _{4}\delta _{3}^{2}a^{4}\frac{\partial ^{2}}{\partial q^{2}}\left(
f\left( 0\right) g_{0}\left( 0\right) \right) \left( 1+o_{p}\left( 1\right)
\right) \text{,}
\end{equation*}%
whereas the denominator in $\left( \ref{xhihat}\right) $ becomes 
\begin{equation*}
\kappa _{4}\delta _{3}^{2}a^{4}\frac{\partial ^{2}}{\partial q^{2}}\left(
f\left( 0\right) g_{0}^{\ast }\left( 0\right) \right) \left( 1+o_{p}\left(
1\right) \right) \text{.}
\end{equation*}%
So that, unless $E(\varepsilon _{t}^{2}\mid q_{t}=\gamma _{0})=E(\varepsilon
_{t}^{2})$, we obtain that (similar to the L'Hopital rule): 
\begin{equation*}
\widehat{\xi }\overset{P}{\rightarrow }\frac{\frac{\partial ^{2}}{\partial
q^{2}}f\left( q\right) g_{0}\left( q\right) \mid _{q=0}}{\frac{\partial ^{2}%
}{\partial q^{2}}f\left( q\right) g_{0}^{\ast }\left( q\right) \mid _{q=0}}=%
\frac{\frac{\partial ^{2}}{\partial q^{2}}\left( f\left( q\right) E\left[
\varepsilon _{t}^{2}\mid q_{t}=q\right] \right) \mid _{q=0}}{\frac{\partial
^{2}}{\partial q^{2}}f\left( q\right) \mid _{q=0}}\neq \zeta \text{,}
\end{equation*}%
and hence $\widehat{\xi }$ would not be a consistent estimator of the scale
factor $\zeta $.
\end{remark}

We can construct the $100s$ percent confidence set of $\gamma _{0}$ by 
\begin{equation*}
\widehat{\Gamma }_{s}=\left\{ \gamma \in \Gamma :\widehat{\xi }%
^{-1}QLR_{n}\left( \gamma \right) \leq F^{-1}\left( s\right) \right\} \text{.%
} 
\end{equation*}%
As we have already argued, this confidence set is valid under both
scenarios, as the next theorem shows.

\begin{theorem}
\label{Th:Gamma_s} Let Assumption \textbf{K}, \textbf{Z} and \textbf{Q} hold
true and suppose that either Assumption \textbf{C} or \textbf{J} hold. Then,
for any $s\in \left( 0,1\right) $, 
\begin{equation*}
P\{\gamma _{0}\in \widehat{\Gamma }_{s}\}\rightarrow s\text{.}
\end{equation*}
\end{theorem}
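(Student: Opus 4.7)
The plan is to reduce the theorem to a direct application of Slutsky's theorem once we combine the two preceding results (Proposition \ref{Th:QLR} and Proposition \ref{Prop:xihat}) together with Hansen's (2000) result stated in (\ref{hansen_1}). The first observation is that, by construction, $\gamma_0 \in \widehat{\Gamma}_s$ if and only if $\widehat{\xi}^{-1} QLR_n(\gamma_0) \leq F^{-1}(s)$. So it suffices to establish that, in both regimes,
\[
\widehat{\xi}^{-1} QLR_n(\gamma_0) \overset{d}{\longrightarrow} \max_{g\in\mathbb{R}}\bigl(2W(g)-|g|\bigr),
\]
whose CDF is precisely $F(z) = (1-e^{-z/2})^2$.

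First, I would record the limit of the numerator. Under Assumption \textbf{J}, (\ref{hansen_1}) gives $QLR_n(\gamma_0) \overset{d}{\to} \xi \cdot \max_g (2W(g)-|g|)$ with $\xi > 0$, while under Assumption \textbf{C}, Proposition \ref{Th:QLR} gives the same limit with scaling $\zeta = \sigma^2(\gamma_0)/\sigma^2 > 0$. Second, the scaling estimator adapts to the correct regime: Proposition \ref{Prop:xihat} delivers $\widehat{\xi} \overset{p}{\to} \xi$ under \textbf{J} and $\widehat{\xi} \overset{p}{\to} \zeta$ under \textbf{C}. Since both scaling constants are strictly positive and the denominators in Slutsky's theorem are continuous and bounded away from zero in probability, the continuous mapping theorem together with Slutsky yields the pivotal limit above in both cases.

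Finally, since $F(z) = (1-e^{-z/2})^2$ is continuous and strictly increasing on $(0,\infty)$, $F^{-1}(s)$ is a continuity point of the limit distribution for every $s \in (0,1)$. Hence
\[
P\bigl(\gamma_0 \in \widehat{\Gamma}_s\bigr) = P\bigl(\widehat{\xi}^{-1} QLR_n(\gamma_0) \leq F^{-1}(s)\bigr) \longrightarrow F\bigl(F^{-1}(s)\bigr) = s,
\]
which is the desired conclusion.

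There is essentially no serious obstacle here: the heavy lifting has already been done in Proposition \ref{Th:QLR} (asymptotic distribution of $QLR_n$ under the kink design, which is the novel content) and Proposition \ref{Prop:xihat} (the delicate L'Hopital-style argument showing that the naive Nadaraya-Watson ratio $\widehat{\xi}$ still converges to the correct scaling $\zeta$ under Assumption \textbf{C} despite both numerator and denominator degenerating). The only point worth double-checking is that the scaling factors $\xi$ and $\zeta$ are strictly positive in each regime so that $\widehat{\xi}^{-1}$ is well-defined asymptotically; this is immediate from Assumption \textbf{J} ($c'Vc, c'Dc > 0$) and from $\sigma^2(\gamma_0) > 0$ implicit in Assumption \textbf{Q}.
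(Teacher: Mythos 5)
Your proof is correct and follows essentially the same route as the paper's: combine Proposition \ref{Th:QLR} (under \textbf{C}) or display (\ref{hansen_1}) (under \textbf{J}) with Proposition \ref{Prop:xihat} via Slutsky, then use continuity of $F$. The paper's own argument is terser (it simply cites the two propositions under \textbf{C} and defers to Hansen's Theorem 2 under \textbf{J}), but the content is the same; you have merely spelled out the Slutsky/continuous-mapping step that the paper leaves implicit.
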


\section{\textbf{BOOTSTRAP}\label{sec:btrp}}

This section develops a bootstrap-based test inversion confidence interval
for the unknown threshold parameter $\gamma _{0}$, which is valid under 
Assumption \textbf{C} as well as under Assumption \textbf{J}. We do not discuss the bootstrap for $%
\alpha _{0}$ in detail but note that the bootstrap for the linear regression
can be employed,\footnote{%
This excludes the case where $\gamma _{0}$ is not strongly identified in the
sense that $\delta _{0}=d\cdot n^{-\varphi }$ with $\varphi \geq 1/2$. This
case has not been explored except when $d=0$, see e.g. Hansen (1996) and it
is an interesting future research area.} see e.g. Shao and Tu $\left(
1995\right) $, since we can treat $\widehat{\gamma }$ as $\gamma _{0}$ for
the inference on $\alpha _{0}$ due to the arguments leading to the
asymptotic independence between $\widehat{\alpha }$ and $\widehat{\gamma }$.

We propose using the bootstrap test inversion method, also known as the grid
bootstrap, of D\"{u}mbgen $\left( 1991\right) $ to build confidence
intervals for the parameter $\gamma $, see also Carpenter $\left(
1999\right) $ and Hansen $\left( 1999\right) $. Such a test inversion
bootstrap confidence interval (BCI) is known to have certain optimality
properties as in e.g. Brown, Casella and Hwang $\left( 1995\right) $ from
the Bayesian perspective. Mikusheva $\left( 2007\right) $ showed that test
inversion BCI attains correct coverage probability uniformly over the
parameter space for the sum of coefficients in autoregressive models,
despite the behavior of the estimator not being uniform over the parameter
space.

For a given confidence level $s$, one can exploit the duality between
hypothesis testing and confidence interval by inverting tests to obtain a
confidence region 
\begin{equation*}
\widehat{\Gamma}_{s}^{\ast }=\left\{ \gamma \in \Gamma :\widehat{\xi }\left(
\gamma \right) ^{-1}QLR_{n}\left( \gamma \right) \leq F_{n}^{\ast }\left(
s|\gamma \right) \right\} \text{ ,}
\end{equation*}%
where $F_{n}^{\ast }\left( s|\gamma \right) $ is the bootstrap estimate of
the $s$th quantile of the statistic $\widehat{\xi }\left( \gamma \right)
^{-1}QLR_{n}\left( \gamma \right) $ when $\gamma _{0}=\gamma $. In other
words, it denotes the bootstrap critical value of level ($1-s$) testing for $%
\mathcal{H}_{0}:\gamma _{0}=\gamma $. In practice, one would estimate $%
F_{n}^{\ast }\left( s|\gamma \right) $ over a grid of $\gamma ^{\prime }s$
and use some smoothing method such as linear interpolation or kernel
averaging to obtain a smoothed bootstrap quantile function over a range of $%
\gamma $. The region $\widehat{\Gamma}_{s}^{\ast }$ is known as $s$-level
grid bootstrap confidence interval (BCI) of $\gamma $ in the terminology of
Hansen $\left( 1999\right) $.


\begin{figure}[tbp]
\begin{center}
\includegraphics[width=6in]{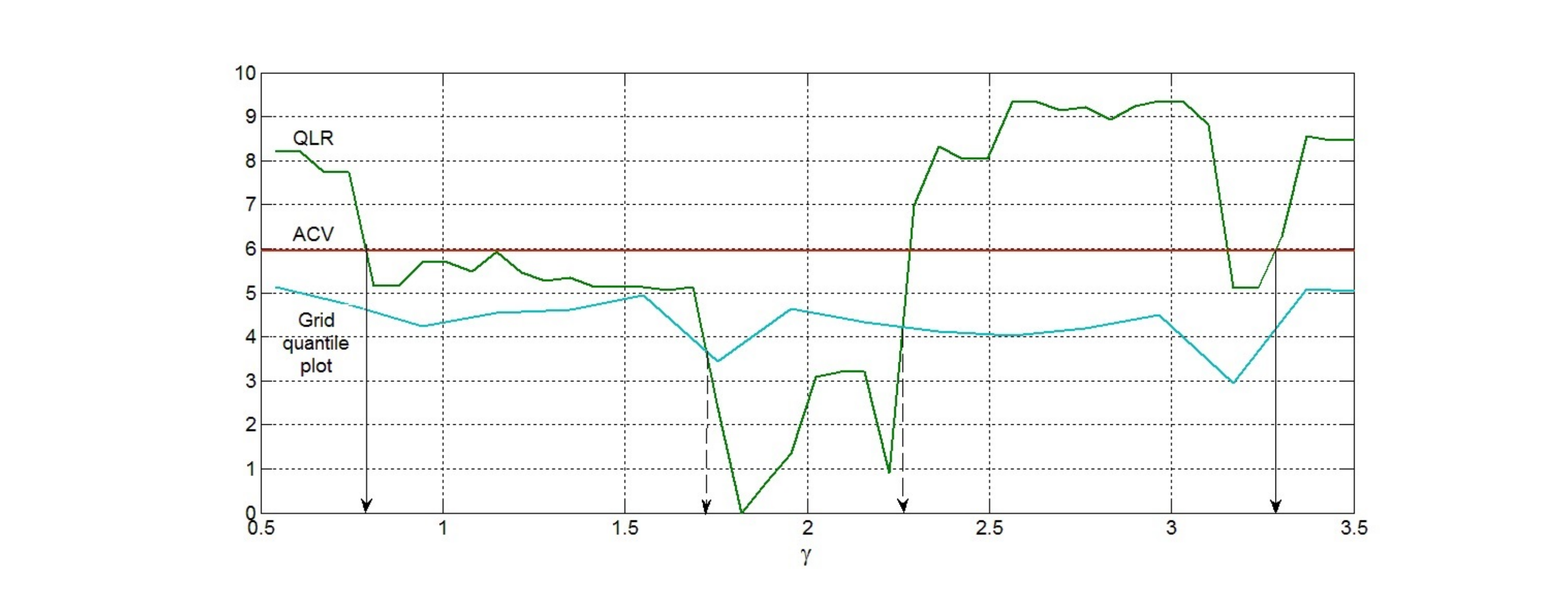}
\end{center}
\caption{$90\%$ asymptotic and grid bootstrap confidence intervals, $n=100$, 
$\protect\gamma_0=2$}
\end{figure}

Figure 1 illustrates how this confidence interval can be obtained in
practice. The $QLR_{n}\left( \gamma \right) $ line is the linear
interpolation of the rescaled $QLR_{n}\left( \gamma \right) $ statistic over
the grid of $\gamma $ at 50 points. The ACV line is the asymptotic critical
value of Hansen $\left( 2000\right) $. The true value of $\gamma_0$ was $2$.
We estimated bootstrap quantile function (described in the sequel) at 17
grid points and present the interpolated line as Grid quantile plot. The
vertical arrow at intersections between $QLR_{n}\left( \gamma \right) $ and
ACV yield the asymptotic confidence interval (ACI), while the vertical
broken arrows indicate grid BCI based on the bootstrap.

Now, we describe the bootstrap procedure for the grid bootstrap. We repeat
the following procedure for each values of $\gamma _{j}\in \left\{ \gamma
_{1},...,\gamma _{g}\right\} $.

\subsection{\textbf{Bootstrap Algorithm for each $\protect\gamma _{j}$}}

\begin{description}
\item[\emph{STEP 1}] Obtain LSE $\left( \widehat{\alpha }^{\prime },\widehat{%
\gamma }\right) ^{\prime }$ by minimizing $\left( \ref{s_theta}\right) $ and
compute the LSE residuals 
\begin{equation*}
\widehat{\varepsilon }_{t}=y_{t}-\widehat{\alpha }^{\prime }x_{t}\left( 
\widehat{\gamma }\right), \text{ \ }t=1,...,n\text{.}
\end{equation*}

\item[\emph{STEP 2}] Generate $\left\{ \eta _{t}\right\} _{t=1}^{n}$ as $%
i.i.d.$ zero mean random variables with unit variance and finite fourth
moments, and compute 
\begin{equation*}
y_{t}^{\ast }=\widehat{\alpha }^{\prime }x_{t}\left( \gamma _{j}\right) +%
\widehat{\varepsilon }_{t}\eta _{t}\text{,\ \ \ }t=1,...,n\text{.}
\end{equation*}

\item[\emph{STEP 3}] Obtain the least squares estimate using $\{y_{t}^{\ast
}\}_{t=1}^{n}$ and $\{x_{t}\}_{t=1}^{n},$ 
\begin{equation}
\widehat{\theta }^{\ast }= \underset{\theta}{\func{argmin}} \ {{\mathbb{S}}}%
_{n}^{\ast }\left( \theta \right) :=\frac{1}{n}\sum_{t=1}^{n}\left(
y_{t}^{\ast }-x_{t}\left( \gamma \right) ^{\prime }\alpha \right) ^{2}\text{.%
}  \label{theta_star}
\end{equation}

\item[\emph{STEP 4}] Compute the bootstrap analogues of $QLR_{n}$ and $%
\widehat{\xi }$ as 
\begin{equation*}
QLR_{n}^{\ast }=n\frac{\widehat{\mathbb{S}}_{n}^{\ast }\left( \gamma
_{j}\right) -\widehat{\mathbb{S}}_{n}^{\ast }\left( \widehat{\gamma }^{\ast
}\right) }{\widehat{\mathbb{S}}_{n}^{\ast }\left( \widehat{\gamma }^{\ast
}\right) },
\end{equation*}%
and 
\begin{equation}
\widehat{\xi }^{\ast }=\frac{\sum_{t=1}^{n}(\widehat{\delta }^{\ast \prime
}x_{t})^{2}\widehat{\varepsilon }_{t}^{\ast 2}K\left( \frac{q_{t}-\widehat{%
\gamma }^{\ast }}{a}\right) }{\mathbb{S}_{n}(\widehat{\theta }^{\ast
})\sum_{t=1}^{n}(\widehat{\delta }^{\ast \prime }x_{t})^{2}K\left( \frac{%
q_{t}-\widehat{\gamma }^{\ast }}{a}\right) }\text{,}  \label{xhihatBoot}
\end{equation}%
where $\widehat{\mathbb{S}}_{n}^{\ast }\left( \gamma \right) $ is defined
analogously as $\widehat{\mathbb{S}}_{n}\left( \gamma \right) $ in (\ref%
{ssngm}) by replacing $y_{t}$ with $y_{t}^{\ast }$.

\item[\emph{STEP 5}] Compute the bootstrap 100$s$-th quantile $F_{n}^{\ast
}\left( s|\gamma _{j}\right) $ from the empirical distribution of $\widehat{%
\xi }^{\ast -1}QLR_{n}^{\ast }$ by repeating STEPs 2-4.
\end{description}

Next, we derive the convergences of the bootstrap LSE $\widehat{\alpha }%
^{\ast }$ and $\widehat{\gamma }^{\ast }$ for both continuous and
discontinuous setups and show the consistency of the bootstrap statistic $%
\widehat{\xi }^{\ast }$. These results then yield the validity of the
bootstrap test inversion confidence set following the same arguments in the
proof of Theorem \ref{Th:Gamma_s}.

As usual, the superscript \textquotedblleft $^{\ast }$\textquotedblright 
{\Large \ }indicates the bootstrap quantities and convergences of bootstrap
statistics conditional on the original data. As in Shao and Tu (1995), the
notation \textquotedblleft $\overset{d^{\ast }}{\longrightarrow },$ \textit{%
in Probability\textquotedblright\ }signifies the the convergence in
Probability of the random distribution functions of the bootstrap statistics
in terms of the uniform metric and $A_{n}^{\ast }=o_{p^{\ast }}\left(
1\right) $ means that $A_{n}^{\ast }\overset{d^{\ast }}{\longrightarrow }0,$ 
\textit{in Probability.}

\begin{theorem}
\label{Th:AD_cubeBoot} Suppose that Assumptions \textbf{Z} and \textbf{Q}
hold true. \newline
$\left( \mathbf{a}\right) $ Under Assumption \textbf{C}, $\widehat{\alpha }%
^{\ast }$ and $\widehat{\gamma }^{\ast }$ are asymptotically independent and
(in probability) 
\begin{eqnarray*}
&&n^{1/2}(\widehat{\alpha }^{\ast }-\widehat{\alpha })\overset{d^{\ast }}{%
\longrightarrow }\mathcal{N}\left( 0,M^{-1}\Omega M^{-1}\right) \text{, \ }
\\
&&n^{1/3}(\widehat{\gamma }^{\ast }-\gamma _{0})\overset{d^{\ast }}{%
\longrightarrow }\arg \max_{g\in \mathbb{R}}\left( 2\delta _{30}\sqrt{\frac{%
\sigma ^{2}\left( \gamma _{0}\right) }{3}f\left( \gamma _{0}\right) }W\left(
g^{3}\right) +\frac{\delta _{30}^{2}}{3}f\left( \gamma _{0}\right)
\left\vert g\right\vert ^{3}\right) \text{. }
\end{eqnarray*}
$\left( \mathbf{b}\right) $ Under Assumption \textbf{J}, $\widehat{\alpha }%
^{\ast }$ and $\widehat{\gamma }^{\ast }$ are asymptotically independent and
(in probability) 
\begin{eqnarray*}
&&n^{1/2}(\widehat{\alpha }^{\ast }-\widehat{\alpha })\overset{d^{\ast }}{%
\longrightarrow }\mathcal{N}\left( 0,M^{-1}\Omega M^{-1}\right) \text{, } \\
&&n^{1-2\varphi }(\widehat{\gamma }^{\ast }-\gamma _{0})\overset{d^{\ast }}{%
\longrightarrow }\frac{2d^{\prime }Vd}{\left( d^{\prime }Dd\right)
^{2}f\left( \gamma _{0}\right) }\arg \max_{g\in \mathbb{R}}\left( 2W\left(
g\right) -\left\vert g\right\vert \right) \text{. }
\end{eqnarray*}
\end{theorem}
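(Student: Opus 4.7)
The plan is to adapt the proof of Theorem \ref{Th:AD_cube} (case a) and of Hansen (2000) (case b) to the bootstrap criterion $\mathbb{S}_{n}^{\ast }(\alpha, \gamma)$, working conditionally on the data in probability. The starting point in both parts is the exact linearity of $y_{t}^{\ast}$ in the weights $\eta _{t}$: by construction $y_{t}^{\ast }-\widetilde{\alpha}^{\prime}x_{t}(\widetilde{\gamma })=\widehat{\varepsilon }_{t}\eta _{t}$ is an iid-weighted, mean-zero noise with conditional variance $\widehat{\varepsilon}_{t}^{2}$. Consequently all first-order stochastic quantities reduce to sums of the form $n^{-1/2}\sum_{t}f(x_{t},q_{t})\widehat{\varepsilon}_{t}\eta _{t}$ for various deterministic-in-data triggers $f$, and a wild-bootstrap CLT (Liu 1988; Mammen 1993) can be applied once the conditional second moments converge to their population counterparts. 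The asymptotic independence of $\widehat{\alpha}^{\ast }$ and $\widehat{\gamma}^{\ast }$ will then follow from the rate-mismatch argument of Chan (1993) exactly as in Theorem \ref{Th:AD_cube}.

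For the slope part, the profile bootstrap estimator is explicit, $\widehat{\alpha}^{\ast }(\gamma )=(n^{-1}\sum_{t}x_{t}(\gamma)x_{t}(\gamma)^{\prime })^{-1}n^{-1}\sum_{t}x_{t}(\gamma )y_{t}^{\ast }$. Combining the wild-bootstrap CLT for $n^{-1/2}\sum_{t}x_{t}(\widehat{\gamma}^{\ast})\widehat{\varepsilon}_{t}\eta _{t}$ with the standard fact that the bootstrap sandwich converges in probability to $M^{-1}\Omega M^{-1}$ (using $n^{-1}\sum_{t}x_{t}x_{t}^{\prime }\widehat{\varepsilon}_{t}^{2}\to \Omega$ by Assumption \textbf{Z} and the consistency of $\widehat{\gamma}$) yields the claimed conditional normal limit at rate $n^{1/2}$ in both (a) and (b).

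For the threshold part, I would expand the concentrated bootstrap criterion at the local scale $\gamma =\widetilde{\gamma}+g\, r_{n}^{-1}$ with $r_{n}=n^{1/3}$ under \textbf{C} and $r_{n}=n^{1-2\varphi }$ under \textbf{J}. The expansion splits into a deterministic drift and a stochastic empirical-process piece. The drift is identical to its non-bootstrap analogue because only the mean $\widetilde{\alpha}^{\prime }x_{t}(\widetilde{\gamma})$ of $y_{t}^{\ast }$ enters it; it is derived in the heuristics preceding Corollary \ref{Th:alpha} for (a), giving $\tfrac{\delta_{30}^{2}}{3}f(\gamma _{0})|g|^{3}$, and by Hansen (2000) for (b), giving $c^{\prime }Dc\cdot f(\gamma _{0})|g|$. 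The stochastic piece is a bootstrap-weighted process which, on the shrinking window of width $r_{n}^{-1}$, must be shown to converge weakly (conditionally in probability) to $2\delta _{30}\sqrt{\sigma ^{2}(\gamma _{0})f(\gamma _{0})/3}\,W(g^{3})$ in case (a) and to $-2\sqrt{c^{\prime }Vc\,f(\gamma _{0})}\,W(g)$ in case (b). Argmax continuous mapping on $C_{\max}(\mathbb{R})$ then delivers the stated limits.

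The main technical obstacle is exactly this conditional functional CLT on shrinking neighborhoods of $\widetilde{\gamma}$. Finite-dimensional conditional convergence is routine: the Lindeberg condition holds because $\eta _{t}$ are iid with unit variance and finite fourth moments, and the conditional covariance over the shrinking window reduces, by Assumption \textbf{Q}, to an integral against $\sigma ^{2}(q)f(q)$ localized at $\gamma _{0}$, with $\widehat{\varepsilon}_{t}^{2}$ replacing $\varepsilon _{t}^{2}$ absorbed through uniform consistency of the residuals on compact $q$-neighborhoods. Conditional tightness requires a maximal inequality for wild-bootstrap-weighted partial sums over the VC-type class $\{\mathbf{1}\{q_{t}\leq \gamma \}:|\gamma -\widetilde{\gamma}|\leq G\,r_{n}^{-1}\}$; I would use Praestgaard--Wellner (1993) style chaining for multiplier empirical processes, adapted to the $\rho$-mixing setting of Assumption \textbf{Z} via a coupling to an independent block sequence. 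This chaining argument is where the bulk of the technical work lies, but it is structurally parallel to the one used in the original (non-bootstrap) cube-root and shrinking-jump asymptotics, so that the bootstrap analogue should go through with careful bookkeeping of the additional randomness contributed by $\{\eta_t\}$.
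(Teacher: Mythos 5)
Your overall architecture---local expansion of the bootstrap criterion, drift plus stochastic piece, finite-dimensional convergence and tightness, then argmax continuous mapping---matches the paper's. Two points, however, are genuine gaps.

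First, the argmax continuous mapping step silently requires that $r_n(\widehat{\gamma}^\ast-\gamma_0)=O_{p^\ast}(1)$ before you may localize at scale $r_n^{-1}$; a conditional functional CLT on a fixed compact $g$-window does not by itself deliver this. The paper establishes the bootstrap rate in a separate step, Proposition~\ref{ConsistencyBoot}, via a peeling argument over the shells $\Xi_j(\upsilon),\Xi_k(\gamma)$ that mirrors the proof of Proposition~\ref{Consistency}, using the bootstrap maximal inequality Lemma~\ref{lem:max_boot}. Your sketch omits this entirely, so the localization at $r_n=n^{1/3}$ (or $n^{1-2\varphi}$) is not justified.

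Second, your proposed tightness argument via Praestgaard--Wellner multiplier chaining, ``adapted to the $\rho$-mixing setting of Assumption~\textbf{Z} via a coupling to an independent block sequence,'' is misdirected. Conditional on the sample, the bootstrap process $J_n^\ast(\gamma,\gamma')=n^{-1/2}\sum_t x_t\mathbf{1}_t(\gamma;\gamma')\widehat{\varepsilon}_t\eta_t$ is a sum of \emph{conditionally independent} terms, because $\{\eta_t\}$ is generated iid irrespective of any serial dependence in $\{x_t,q_t,\varepsilon_t\}$. No coupling is needed; the wild bootstrap manufactures independence for free. The $\rho$-mixing of the original data only matters for controlling the magnitude of the random (data-dependent) conditional variances and fourth moments such as $n^{-1}\sum_t\|x_t\|^2\widehat{\varepsilon}_t^{\,2}\mathbf{1}_t(\gamma_j;\gamma_k)$, and these are bounded in probability by applying the non-bootstrap maximal inequality (Lemma~\ref{lem:maxineq2}) to the original data. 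That is precisely what the paper's Lemma~\ref{lem:max_boot} does: it bounds the fourth \emph{bootstrap} moment by a data-dependent quantity, then controls that quantity using the original moment inequalities, yielding bounds of order $H_n(k-j)^2\zeta_m^2$ with $H_n=O_p(1)$. Your route could in principle be made to work but imports machinery whose difficulties (block coupling for dependent data) simply do not arise in the conditional bootstrap world, and it misses the structural fact that makes the wild bootstrap tractable here.

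A minor further caveat: the asymptotic independence of $\widehat{\alpha}^\ast$ and $\widehat{\gamma}^\ast$ in part~(b) is not a pure rate-mismatch argument, since $n^{1-2\varphi}$ can be faster, equal to, or slower than $n^{1/2}$ depending on $\varphi$. It instead follows from the vanishing covariance between the full-sample score $n^{-1/2}\sum_t\mathbf{x}_t\varepsilon_t^\ast$ and the shrinking-window process, exactly as computed at the end of the proof of Theorem~\ref{Th:AD_cube}.
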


Our results can be compared with those already obtained in the literature
regarding the validity of bootstrap for non-standard estimators. First, our
consistency result seems to contradict Seijo and Sen's $\left( 2011\right) $
result on the inconsistency of a residual-based bootstrap and the
nonparametric bootstrap (with $i.i.d.$ data) for the case where $\varphi =0$%
, see also Yu $\left( 2014\right) $. The reason behind such contradictory
conclusions lies in the observation that our setup differs from theirs in an
important and vital way: they consider the case of a fixed size of the break
whereas we consider the situation that $\delta _{0}=d\cdot n^{-\varphi }$
decreases with the sample size. Thus, their limiting\ distribution depends
on the whole conditional distribution of $\varepsilon _{t}\eta _{t}d^{\prime
}x_{t}$ given $q_{t}=\gamma _{0}$ in a complicated manner, whereas ours
contains only an unknown scaling factor.

It is worth mentioning that the centering term for $\widehat{\gamma}^{\ast }$
is $\gamma _{0}$, which reflects the fact that our resampling scheme imposes
the hypothesized true value for the unknown threshold. This is important for
the validity of our bootstrap since we do not impose the continuity
restriction in our bootstrap resampling. By imposing the null value, our
resampling scheme builds on $\sqrt{n}$-consistent estimates.

Next, the consistency of $\widehat{\xi }^{\ast }$ is established in the
following proposition.

\begin{proposition}
\label{Prop:xihatstar}Suppose Assumptions \textbf{Z}, \textbf{Q} and \textbf{%
K} hold and either of Assumption \textbf{J} or Assumption \textbf{C} holds
true. Then, 
\begin{equation*}
\widehat{\xi }^{\ast }-\widehat{\xi }=o_{p^{\ast }}\left( 1\right) \text{.}
\end{equation*}
\end{proposition}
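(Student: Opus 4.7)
My plan is to show $\widehat{\xi}^{\ast}-\widehat{\xi}=o_{p^{\ast}}(1)$ in two separate regimes (Assumption \textbf{J} and Assumption \textbf{C}), essentially by replaying the proof of Proposition \ref{Prop:xihat} inside the bootstrap world and using Theorem \ref{Th:AD_cubeBoot} to handle the randomness in $\widehat{\theta}^{\ast}$. In each regime I would argue $\widehat{\xi}^{\ast}\overset{P^{\ast}}{\to}\xi$ (resp.~$\zeta$) in probability, and then combine with $\widehat{\xi}\overset{P}{\to}\xi$ (resp.~$\zeta$) from Proposition \ref{Prop:xihat} to conclude.

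First I would set the stage. Theorem \ref{Th:AD_cubeBoot} gives $\widehat{\gamma}^{\ast}-\gamma_{0}=O_{p^{\ast}}(n^{-1/3})$ and $\widehat{\alpha}^{\ast}-\widetilde{\alpha}=O_{p^{\ast}}(n^{-1/2})$ under \textbf{C} (and the jump-design analogues under \textbf{J}); combined with the original-sample rates this yields $\widehat{\gamma}^{\ast}-\widehat{\gamma}$ and $\widehat{\delta}^{\ast}-\widehat{\delta}$ both $o_{p^{\ast}}(1)$. The bootstrap residuals decompose as $\widehat{\varepsilon}_{t}^{\ast}=\widehat{\varepsilon}_{t}\eta_{t}+r_{t}^{\ast}$ with $r_{t}^{\ast}=(\widehat{\alpha}-\widehat{\alpha}^{\ast})^{\prime}x_{t}(\widehat{\gamma}^{\ast})+\widehat{\alpha}^{\prime}(x_{t}(\widehat{\gamma})-x_{t}(\widehat{\gamma}^{\ast}))$, which is negligible at the relevant rates.

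Under Assumption \textbf{J}, the numerator $\frac{1}{na}\sum(\widehat{\delta}^{\ast\prime}x_{t})^{2}\widehat{\varepsilon}_{t}^{\ast 2}K((q_{t}-\widehat{\gamma}^{\ast})/a)$ is a standard Nadaraya-Watson object. Because the bootstrap DGP conditionally preserves the covariate structure and the heteroskedasticity (through $\widehat{\varepsilon}_{t}^{2}$ with $E^{\ast}\eta_{t}^{2}=1$), the bootstrap conditional moment $E^{\ast}[(\widehat{\delta}^{\ast\prime}x_{t})^{2}\widehat{\varepsilon}_{t}^{\ast 2}\mid q_{t}=\gamma_{0}]$ matches the original conditional moment $E[(\delta_{0}^{\prime}x_{t})^{2}\varepsilon_{t}^{2}\mid q_{t}=\gamma_{0}]$ up to $o_{p^{\ast}}(1)$. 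Kernel consistency (Assumptions \textbf{K1}, \textbf{K4}) then gives $\widehat{\xi}^{\ast}\overset{P^{\ast}}{\to}\xi$ in $P$, and the conclusion follows.

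Under Assumption \textbf{C}, both numerator and denominator of $\widehat{\xi}^{\ast}$ degenerate, exactly as in the original-sample argument of Proposition \ref{Prop:xihat}; I would appeal to Lemma \ref{lem:4proposition} and its bootstrap analogue. The key point is that $\widehat{\alpha}\to\alpha_{0}$ in $P$, so the bootstrap DGP asymptotically satisfies the continuity restriction; hence the Taylor expansion of $K((q_{t}-\widehat{\gamma}^{\ast})/a)$ around $q_{t}=\gamma_{0}$ delivers the same leading term of order $\kappa_{2}a^{2}\delta_{30}^{2}\frac{\partial}{\partial q}\{f(q)\sigma^{2}(q)\}\mid_{q=\gamma_{0}}$ for the numerator and $\kappa_{2}a^{2}\delta_{30}^{2}f^{\prime}(\gamma_{0})$ (up to the appropriate factors) for the denominator. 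The ratio then converges in bootstrap probability to $\zeta=\sigma^{2}(\gamma_{0})/\sigma^{2}$, matching $\widehat{\xi}$.

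The main obstacle is the \textbf{C} case, specifically verifying that the sampling noise introduced by $\widehat{\alpha}^{\ast}-\widetilde{\alpha}=O_{p^{\ast}}(n^{-1/2})$ and $\widehat{\gamma}^{\ast}-\gamma_{0}=O_{p^{\ast}}(n^{-1/3})$ does not contaminate the leading $\kappa_{2}a^{2}$ term in numerator and denominator, since the ratio is scale-free but each piece is of order $a^{2}$. Concretely, remainder terms in the Taylor expansion contribute at scale $n^{-1/2}$ and $n^{-2/3}$ respectively; after dividing by the $a^{2}$ leading order one needs $n^{-1/2}/a^{2}$ and $n^{-2/3}/a^{2}$ to be $o_{p^{\ast}}(1)$, which is exactly what Assumption \textbf{K4} (via $a^{-3}n^{-1}\to 0$, together with $a\to 0$) is tailored for, paralleling its role in Proposition \ref{Prop:xihat}. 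Showing these bounds uniformly in the bootstrap draw is the technical heart of the argument.
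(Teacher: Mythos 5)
Your overall strategy matches the paper's: the proof of Proposition~\ref{Prop:xihatstar} is indeed a bootstrap replay of Proposition~\ref{Prop:xihat}, using the rates from Proposition~\ref{ConsistencyBoot} (stated inside the proof of Theorem~\ref{Th:AD_cubeBoot}) to decompose $\widehat{\varepsilon}_t^{\ast}$, reducing the numerator and denominator of $\widehat{\xi}^{\ast}$ to a $\widetilde{\delta}_3^2 q_t^2$ kernel average, and invoking the bootstrap kernel lemmas (Lemmas~\ref{lem:4propositionBoot} and~\ref{lem:4proposition_1Boot}) together with Assumption~\textbf{K4}. Your identification of where the technical work lies—controlling the $O_{p^{\ast}}(n^{-1/2})$ and $O_{p^{\ast}}(n^{-2/3})$ remainders relative to the $a^2$-scale leading term—is also on target.

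However, you misstate the leading terms of the kernel averages. Under Assumption~\textbf{C} with $\kappa_2 \neq 0$ (which Assumption~\textbf{K1} enforces), the numerator of $\widehat{\xi}^{\ast}$, after the usual normalization, has leading term $\kappa_2\,\delta_{30}^2\,a^2\, f(\gamma_0)\,\sigma^2(\gamma_0)$ and the denominator has leading term $\kappa_2\,\delta_{30}^2\,a^2\, f(\gamma_0)$; these are level quantities, not derivatives. You wrote the leading terms as $\kappa_2 a^2 \delta_{30}^2 \tfrac{\partial}{\partial q}\{f(q)\sigma^2(q)\}\bigm|_{q=\gamma_0}$ and $\kappa_2 a^2 \delta_{30}^2 f^{\prime}(\gamma_0)$, which is the structure that appears only when $\kappa_2 = 0$ (i.e., higher-order kernels), and this is precisely the pathological case that Remark~\ref{Rem_k2} discusses and Assumption~\textbf{K1} is designed to rule out. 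Indeed, in the notation of Lemma~\ref{lem:4propositionBoot}, $h_{0,2}(0)=\kappa_2 f(0)g_0(0)+O(a)$ with $g_0(0)=\sigma^2(\gamma_0)$; the first-derivative term is the $\kappa_3$-term, which vanishes because $K$ is symmetric. If you carried your stated leading terms through, the ratio would not simplify to $\sigma^2(\gamma_0)$, so this is a substantive misstep even though the final conclusion you write down (convergence to $\zeta=\sigma^2(\gamma_0)/\sigma^2$) happens to be the right one.
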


A direct consequence of Theorem \ref{Th:AD_cubeBoot} and Proposition \ref%
{Prop:xihatstar} is the following theorem.

\begin{theorem}
\label{Th:QLRBoot} Now, suppose either Assumption \textbf{J} or Assumption 
\textbf{C} hold true in addition to Assumptions \textbf{Z}, \textbf{Q} and 
\textbf{K}. Then, (in probability) 
\begin{equation*}
\widehat{\xi }^{\ast -1}QLR_{n}^{\ast }\overset{d^{\ast }}{\longrightarrow }%
\max_{g\in \mathbb{R}}\left( 2W\left( g\right) -\left\vert g\right\vert
\right) \text{.}
\end{equation*}
\end{theorem}

\section{\textbf{Monte Carlo Experiment}}

\label{sec:MC}

We generate data based on the following 3 specifications, with settings A
and B being jump models akin to that considered in Hansen (2000, Section
4.2) and setting C representing the kink case. 
\begin{align*}
& A:\,y_{t}=2+3x_{t}+\delta x_{t}1\left\{ q_{t}>\gamma _{0}\right\}
+\varepsilon _{t}, \\
& B:\,y_{t}=2+3q_{t}+\delta q_{t}1\left\{ q_{t}>\gamma _{0}\right\}
+\varepsilon _{t}, \\
& C:\,y_{t}=2+3q_{t}+\delta (q_{t}-\gamma_{0})1\left\{ q_{t}>\gamma_{0}\right\} +\varepsilon
_{t}.
\end{align*}%
The main difference in our data generating process from that of Hansen
(2000) is the conditional heteroscedasticity in $\varepsilon _{t}$: we set $%
\varepsilon _{t}=|q_{t}|e_{t}$ where $\left\{ e_{t}\right\} _{t\geq 1}$ and $%
\left\{ q_{t}\right\} _{t\geq 1}$ were generated as mutually independent and 
$i.i.d.$ normal random variables with unit variance. This leads to
conditional heteroscedasticity of the form $E(\varepsilon
_{t}^{2}|q_{t})=q_{t}^{2}$, in contrast to Hansen (2000) where $\varepsilon
_{t}$ was generated from $N(0,1)$. In setting A, we generated $x_{t}$ as $%
i.i.d.$ draws from $N(2,1)$, independent of $\left\{ e_{t}\right\} _{t\geq 1}
$ and $\left\{ q_{t}\right\} _{t\geq 1}$, while we set $Eq_{t}=2.$ We
generate $\left\{ e_{t}\right\} _{t\geq 1}$ and $\left\{ q_{t}\right\}
_{t\geq 1}$ the same for setting B. For both settings A and B, we try $%
\gamma _{0}=2$ and $2.674$, which correspond to the median and third
quartile of $q_{t}$, respectively. In setting C, we set $ \gamma_{0}=0 $ and try $Eq_{t}=0$ or $-0.674
$ so that the threshold corresponds to the median or the third quartile of $%
q_{t}$, respectively. 
For the grid $\Gamma _{n}$ used in estimation of $\gamma _{0}$, we discarded 
$10\%$ of extreme values of realized $q_{t}$ and used $n/2$ number of
equidistant points.

\begin{table}[tbph]
	\caption{Monte Carlo size of test $H_{0}:\protect\gamma =\protect\gamma _{0}$
		and coverage probability of confidence intervals of $\protect\gamma _{0}$,
		model A: $q_{t}\neq x_{t}$, $\protect\delta =n^{-\protect\varphi }\protect%
		\sqrt{10}/4$ }{\small \centering
		\setlength{\tabcolsep}{1pt} }
	\par
	{\small \ 
		\begin{tabular}{cc|r|rrr|c|rrr|rrr}
			\hline
			&  & \multicolumn{4}{|c|}{Size} & \multicolumn{7}{|c}{Coverage Probability}
			\\ \hline
			&  & $\gamma _{0}$ & \multicolumn{3}{|c|}{median of $q_{t}$(2)} & $\gamma
			_{0}$ & \multicolumn{3}{|c}{median of $q_{t}$(2)} & \multicolumn{3}{|c}{
				third quart. of $q_{t}$(2.674)} \\ 
			$\varphi$ &  & $s$\textbackslash {}$n$ & 100 & 250 & 500 & 
			\multicolumn{1}{|r|}{$\zeta $\textbackslash{}$n$} & 100 & 250 & 500 & 100 & 
			250 & 500 \\ \hline
			1/4 & Asym & 0.01 & 0.095 & 0.059 & 0.044 & \multicolumn{1}{|r|}{0.9} & 0.733
			& 0.770 & 0.774 & 0.811 & 0.834 & 0.844 \\ 
			&  & 0.05 & 0.195 & 0.153 & 0.130 & \multicolumn{1}{|r|}{0.95} & 0.818 & 
			0.832 & 0.857 & 0.870 & 0.895 & 0.914 \\ 
			&  & 0.1 & 0.290 & 0.242 & 0.200 & \multicolumn{1}{|r|}{0.99} & 0.916 & 0.938
			& 0.950 & 0.953 & 0.971 & 0.980 \\ 
			& B/rap & 0.01 & 0.003 & 0.015 & 0.009 & \multicolumn{1}{|r|}{0.9} & 0.756 & 
			0.810 & 0.840 & 0.783 & 0.826 & 0.852 \\ 
			&  & 0.05 & 0.052 & 0.055 & 0.037 & \multicolumn{1}{|r|}{0.95} & 0.833 & 
			0.880 & 0.910 & 0.859 & 0.892 & 0.915 \\ 
			&  & 0.1 & 0.106 & 0.095 & 0.083 & \multicolumn{1}{|r|}{0.99} & 0.928 & 0.959
			& 0.969 & 0.935 & 0.965 & 0.980 \\ \hline
			1/8 & Asym & 0.01 & 0.068 & 0.037 & 0.029 & \multicolumn{1}{|r|}{0.9} & 0.79
			& 0.837 & 0.897 & 0.817 & 0.835 & 0.872 \\ 
			&  & 0.05 & 0.164 & 0.092 & 0.077 & \multicolumn{1}{|r|}{0.95} & 0.856 & 
			0.898 & 0.923 & 0.873 & 0.91 & 0.914 \\ 
			&  & 0.1 & 0.214 & 0.15 & 0.129 & \multicolumn{1}{|r|}{0.99} & 0.933 & 0.961
			& 0.975 & 0.949 & 0.964 & 0.972 \\ 
			& B/rap & 0.01 & 0.006 & 0.009 & 0.008 & \multicolumn{1}{|r|}{0.9} & 0.791 & 
			0.846 & 0.881 & 0.792 & 0.827 & 0.871 \\ 
			&  & 0.05 & 0.046 & 0.052 & 0.049 & \multicolumn{1}{|r|}{0.95} & 0.858 & 
			0.907 & 0.93 & 0.859 & 0.9 & 0.917 \\ 
			&  & 0.1 & 0.099 & 0.095 & 0.105 & \multicolumn{1}{|r|}{0.99} & 0.936 & 0.968
			& 0.98 & 0.938 & 0.963 & 0.972 \\ \hline
		\end{tabular}
	}
	\\
	\parbox{1\textwidth}{\footnotesize \emph{Note: }  Size results for test of $H_{0}:\gamma
		=\gamma _{0}$ with nominal size $s$ based on Hansen $(2000)$'s asymptotic
		distribution (Asym), and our bootstrap (B/rap). Coverage probability results
		for $\gamma _{0}$ with asymptotic confidence interval based on Hansen $%
		(2000) $ and our grid bootstrap confidence interval, with nominal confidence
		level $\zeta $. $\delta =n^{-1/4}\sqrt{10}/4 = 0.25, 0.1988, 0.1672$, $%
		\delta = n^{-1/8}\sqrt{10}/4 =0.4446,0.3965,0.3636$ for $n=100, 250, 500$ }
	
\end{table}

We investigate finite-sample performance of testing and confidence regions
for $\gamma $ given in Sections \ref{sec:Unified} and \ref{sec:btrp}. We
first compare the Monte Carlo size of tests for the correct location of the
threshold, based on the asymptotic theory of Hansen $\left( 2000\right) $,
which covers diminishing jump models, and our bootstrap method. We then
investigate coverage probabilities of confidence intervals, constructed from
either the asymptotic theory of Hansen $\left( 2000\right) $, or
test-inversion based on our bootstrap. Our method has the virtue of
robustness across different settings, and the objective is to see how it
works across the jump settings of A and B and the kink setting of C. In A
and B, we try two sets of $\delta $ with $\varphi =1/4,1/8$: $\delta
=n^{-1/4}\sqrt{10}/4=0.25,0.1988,0.1672$, and $\delta =n^{-1/8}\sqrt{10}%
/4=0.4446,0.3965,0.3636$ for $n=100,250,500$ reflecting Assumption \textbf{J}%
. In setting C, $\delta $ is fixed at $\delta =2$ in line with Assumption 
\textbf{C}.\footnote{%
Note that $\delta =0.25,2$ were the smallest and the largest values of $%
\delta $ tried in Hansen (2000), respectively.} For the estimate $\widehat{%
\xi }$ of the scale factor for the $QLR_{n}$ statistic, Epanechnikov kernel
and minimum-MSE bandwidth choice, given in H\"{a}rdle and Linton $(1994)$,
were deployed. 

\begin{table}[tbp]
\caption{Monte Carlo size of test $H_{0}:\protect\gamma =\protect\gamma _{0}$
and coverage probability of confidence intervals of $\protect\gamma _{0}$,
model B: $q_{t}=x_{t}$, $\protect\delta =n^{-\protect\varphi}\protect\sqrt{10%
}/4 $ }{\small \centering
\setlength{\tabcolsep}{1pt} }
\par
{\small \ 
\begin{tabular}{cc|r|rrr|c|rrr|rrr}
\hline
&  & \multicolumn{4}{|c|}{Size} & \multicolumn{7}{|c}{Coverage Probability}
\\ \hline
&  & $\gamma _{0}$ & \multicolumn{3}{|c|}{median of $q_{t}$(2)} & $\gamma
_{0}$ & \multicolumn{3}{|c}{median of $q_{t}$(2)} & \multicolumn{3}{|c}{
third quart. of $q_{t}$(2.674)} \\ 
$\varphi$ &  & $s$\textbackslash {}$n$ & 100 & 250 & 500 & 
\multicolumn{1}{|r|}{$\zeta $\textbackslash{}$n$} & 100 & 250 & 500 & 100 & 
250 & 500 \\ \hline
1/4 & Asym & 0.01 & 0.185 & 0.145 & 0.155 & \multicolumn{1}{|r|}{0.9} & 0.608
& 0.612 & 0.658 & 0.740 & 0.730 & 0.725 \\ 
&  & 0.05 & 0.344 & 0.293 & 0.268 & \multicolumn{1}{|r|}{0.95} & 0.687 & 
0.707 & 0.742 & 0.813 & 0.817 & 0.827 \\ 
&  & 0.1 & 0.437 & 0.379 & 0.365 & \multicolumn{1}{|r|}{0.99} & 0.831 & 0.851
& 0.859 & 0.905 & 0.924 & 0.926 \\ 
& B/rap & 0.01 & 0.022 & 0.013 & 0.021 & \multicolumn{1}{|r|}{0.9} & 0.770 & 
0.836 & 0.866 & 0.868 & 0.882 & 0.878 \\ 
&  & 0.05 & 0.101 & 0.066 & 0.071 & \multicolumn{1}{|r|}{0.95} & 0.853 & 
0.894 & 0.924 & 0.932 & 0.943 & 0.943 \\ 
&  & 0.1 & 0.203 & 0.126 & 0.133 & \multicolumn{1}{|r|}{0.99} & 0.946 & 0.972
& 0.982 & 0.975 & 0.984 & 0.980 \\ \hline
1/8 & Asym & 0.01 & 0.155 & 0.098 & 0.079 & \multicolumn{1}{|r|}{0.9} & 0.661
& 0.72 & 0.786 & 0.771 & 0.779 & 0.791 \\ 
&  & 0.05 & 0.285 & 0.207 & 0.158 & \multicolumn{1}{|r|}{0.95} & 0.745 & 
0.802 & 0.852 & 0.852 & 0.844 & 0.855 \\ 
&  & 0.1 & 0.368 & 0.275 & 0.224 & \multicolumn{1}{|r|}{0.99} & 0.86 & 0.886
& 0.921 & 0.925 & 0.941 & 0.938 \\ 
& B/rap & 0.01 & 0.029 & 0.009 & 0.017 & \multicolumn{1}{|r|}{0.9} & 0.797 & 
0.871 & 0.904 & 0.886 & 0.891 & 0.888 \\ 
&  & 0.05 & 0.093 & 0.073 & 0.065 & \multicolumn{1}{|r|}{0.95} & 0.878 & 
0.917 & 0.945 & 0.936 & 0.946 & 0.943 \\ 
&  & 0.1 & 0.171 & 0.113 & 0.109 & \multicolumn{1}{|r|}{0.99} & 0.95 & 0.981
& 0.99 & 0.984 & 0.984 & 0.98 \\ \hline
\end{tabular}
}
\parbox{1\textwidth}{\footnotesize \emph{Note: }  Size results for test of $H_{0}:\gamma
=\gamma _{0}$ with nominal size $s$ based on Hansen $(2000)$'s asymptotic
distribution (Asym), and our bootstrap (B/rap). Coverage probability results
for $\gamma _{0}$ with asymptotic confidence interval based on Hansen $%
(2000) $ and our grid bootstrap confidence interval, with nominal confidence
level $\zeta $. $\delta =n^{-1/4}\sqrt{10}/4 = 0.25, 0.1988, 0.1672$, $%
\delta = n^{-1/8}\sqrt{10}/4 =0.4446,0.3965,0.3636$ for $n=100, 250, 500$.}
\end{table}

Columns 4-6 of Tables 1-3 present Monte Carlo size of test of $%
H_0:\gamma=\gamma_0$ when $\gamma_0$ is the median of $q_t$ for nominal
sizes $s=0.1, 0.05, 0.01$ for the three settings. We carried out 10,000
iterations, with one bootstrap per iteration, using the warp-speed method of
Giacomini, Dimitris and White (2013). Using the asymptotic critical values
delivers poor Monte Carlo sizes in settings A and B with substantial
over-sizing, which is more severe in setting B. In contrast, the bootstrap
test produces sizes that are close to the nominal ones, apart from $n=100$
in B, for both $\varphi$. For the asymptotic test, the size results are
somewhat better when $\varphi =1/8$ compared to $\varphi =1/4$ in settings A
and B, although the over-sizing remains severe even for $\varphi =1/8$ in
setting B as shown in Table 2. For the kink setting C, asymptotic test based
on Hansen's $(2000)$ results produces sizes that become very small with
increasing $n$, while the bootstrap test leads to good size results for $%
n=250, 500$.

Columns 8-10 of Tables 1-3 report the coverage probabilities of confidence
intervals for $\gamma_0$ in the three settings, when $\gamma_0$ is the
median of $q_t$, and columns 11-13 present the case when $\gamma_0$ is the
third quartile of $q_t$, for confidence levels $\zeta=0.9, 0.95, 0.99$.
Results are based on 1,000 iterations and in each iteration, we generated
bootstrap quantile plots by interpolating bootstrap quantiles obtained at 10
equidistant points of the realized support of $q_t$ from 399 bootstraps, and
found intersections with the sample $QLR_n$ plot formed by interpolating
between $n/2$ number of equidistant points after discarding $10\%$ of
extreme values of realized $q_t$.

In settings A and B reported in Tables 1 and 2, the coverage probability
results are better when $\gamma _{0}$ is the third quartile of $q_{t}$ for
both methods when $\varphi =1/4$. For $\varphi =1/8$, this is still the
case, with the exception of bootstrap coverage probabilities in setting A,
which are similar between the two values of $\gamma _{0}$. In setting A as
shown in Table 1, the asymptotic and bootstrap methods perform similarly,
reporting lower-than-nominal coverage probabilities which improve with
larger $n$. In setting B, the bootstrap method delivers substantially better
coverage probabilities than the asymptotic confidence intervals based on
Hansen $(2000)$, which remain substantially lower than the nominal level
even for $n=500$ for $\varphi =1/4$. Such under-coverage of asymptotic
confidence intervals for small $\delta =0.25$ was also reported in Hansen's
(2000) Table 2, for homoskedastic error case. The coverage probability
results are better when $\varphi =1/8$ compared to $\varphi =1/4$ for both
methods in setting B, especially so for asymptotic confidence intervals. In
Hansen's (2000) Table 2, coverage probability was also good for $\delta =0.5$.

In setting C reported in Table 3, the asymptotic coverage probabilities
becomes close to 1 for all values of $\zeta $ for $n=250, 500$, while
bootstrap coverage probabilities are satisfactory for $n=250, 500$. The
bootstrap coverage probability is better when $\gamma_0$ is the third
quartile of $q_t$ compared to when it is the median.\footnote{%
In Table 4 in Online Appendix, we report Monte Carlo size and coverage
probability results for $\gamma$ when $\varphi=0$ with $\delta$ fixed at $%
\sqrt{10}/4=0.7906$ and $0.25$ in setting A ($q_t\neq x_t$) with
homoscedastic error. Fixed jump setup is not covered by Hansen (2000) or our
bootstrap of Section \ref{sec:btrp}, but nonetheless we investigate how the
two methods perform in this setting for completeness.}

\begin{table}[tbp]
	\caption{Monte Carlo size of test $H_{0}:\protect\gamma =\protect\gamma _{0}$
		and coverage probability of confidence intervals of $\protect\gamma _{0}$,
		model C, kink, $\protect\delta=2$}{\small \centering
		\setlength{\tabcolsep}{1pt} }
	\par
	{\small \ 
		\begin{tabular}{cc|r|rrr|c|rrr|rrr}
			\hline
			&  & \multicolumn{4}{|c|}{Size} & \multicolumn{7}{|c}{Coverage Probability}
			\\ \hline
			&  & $\gamma _{0}$ & \multicolumn{3}{|c|}{median of $q_{t}$} & $\gamma _{0}$
			& \multicolumn{3}{|c}{median of $q_{t}$} & \multicolumn{3}{|c}{third quart.
				of $q_{t}$} \\ 
			&  & $s$\textbackslash {}$n$ & 100 & 250 & 500 & \multicolumn{1}{|r|}{$\zeta 
				$\textbackslash{}$n$} & 100 & 250 & 500 & 100 & 250 & 500 \\ \hline
			C & Asym & 0.01 & 0.123 & 0.028 & 0.005 & \multicolumn{1}{|r|}{0.9} & 0.802
			& 0.946 & 0.975 & 0.749 & 0.925 & 0.972 \\ 
			&  & 0.05 & 0.168 & 0.043 & 0.015 & \multicolumn{1}{|r|}{0.95} & 0.84 & 0.965
			& 0.983 & 0.784 & 0.945 & 0.98 \\ 
			&  & 0.1 & 0.200 & 0.056 & 0.024 & \multicolumn{1}{|r|}{0.99} & 0.892 & 0.982
			& 0.992 & 0.852 & 0.966 & 0.99 \\ 
			& B/rap & 0.01 & 0.027 & 0.014 & 0.012 & \multicolumn{1}{|r|}{0.9} & 0.768 & 
			0.854 & 0.805 & 0.828 & 0.894 & 0.877 \\ 
			&  & 0.05 & 0.091 & 0.054 & 0.052 & \multicolumn{1}{|r|}{0.95} & 0.817 & 
			0.918 & 0.889 & 0.88 & 0.949 & 0.943 \\ 
			&  & 0.1 & 0.153 & 0.108 & 0.104 & \multicolumn{1}{|r|}{0.99} & 0.905 & 0.979
			& 0.975 & 0.954 & 0.981 & 0.984 \\ \hline
		\end{tabular}
	}
	\parbox{1\textwidth}{\footnotesize \emph{Note: }  Size results for test of $H_{0}:\gamma
		=\gamma _{0}$ with nominal size $s$ based on Hansen $(2000)$'s asymptotic
		distribution(Asym), and our bootstrap(B/rap). Coverage probability results
		for $\gamma _{0}$ with asymptotic confidence interval based on Hansen $%
		(2000) $ and our grid bootstrap confidence interval, with nominal confidence
		level $\zeta $. When $\gamma_0$ is median, $q_t \sim N(0,1) $. When $%
		\gamma_0$ is third quartile, $q_t \sim N(-0.674,1)$ and $ \gamma_0 =0 $.}
\end{table}

\section{\textbf{EMPIRICAL APPLICATION: GROWTH AND DEBT}}

\label{sec:Application}

The so-called Reinhart-Rogoff hypothesis postulates that above some
threshold (90$\%$ being their estimate of this threshold), higher
debt-to-GDP ratio is associated with lower GDP growth rate. There have been
numerous studies that utilize the threshold regression models to assess this
hypothesis, including Hansen $(2017)$ who fitted a kink model to a time
series of US annual data, see Hansen $(2017)$ for references on earlier
studies which fitted jump models to various data sets. As there is little
guidance from economic theory on the choice between kink and jump models in
this setting, we advocate the use of our robust inference on the threshold
and slope parameters of the model.

Hansen $(2017)$ had fitted a kink model to US annual data on real GDP growth
rate in year $t$ ($y_{t}$) and debt-to-GDP ratio from the \textit{previous}
year ($q_{t}$) for the period spanning 1792-2009 ($n=218$), and estimated
the threshold to be $43.8\%$, while the slope parameters of $q_{t}$ were not
significant. Before fitting the jump model to this data, we first tested for
the presence of threshold effect using the testing procedure of Hansen
(1996) with 1,000 bootstrap replications, and obtained $p$-value of 0.047,
rejecting the null hypothesis of no threshold effect. This is in contrast to
the $p$-value of 0.15 obtained by Hansen's $(2017)$ test for presence of
threshold effect when imposing the kink model. Hansen $(2017)$ had remained
inconclusive on the presence of kink threshold effect, since the bootstrap
method used there did not account for the time series nature of data and the
high $p$-value could have been due to modest power of the test.

The fitted jump model is given by: 
\begin{equation*}
\widehat{y}_{t}=\left\{ 
\begin{array}{ll}
\underset{(0.87)}{4.82}-\underset{(0.16)}{0.052}y_{t-1}-\underset{(0.049)}{%
0.114}q_{t}, & \text{if }q_{t}\leq 17.2 \\ 
\underset{(0.74)}{2.78}+\underset{(0.082)}{0.49}y_{t-1}-\underset{(0.012)}{%
0.017}q_{t}, & \text{if }q_{t}>17.2%
\end{array}%
\right.
\end{equation*}%
The sizes of the two regimes were 99 (below 17.2$\%$) and 109 (above 17.2$\%$%
). We obtained grid bootstrap confidence intervals for $\gamma _{0}$ to be
(10.5, 39) for 95$\%$ confidence level and (10.8, 38.6) for 90$\%$, based on
399 bootstrap iterations. Bootstrap quantiles were obtained at 38 grid
points, which included $\widehat{\gamma }$, $\widetilde{\gamma }$ and
equidistant points on the realized support of $q_{t}$ after discarding 7.5$%
\% $ of the largest and smallest values of $q_{t}$ in the sample.{\footnote{%
There is currently no theoretical guide to the choice of the trimming
parameter. Our choice of trimming out 7.5$\%$ was guided by Sweden's
estimated $\tilde{\gamma}$ being the 12-th percentile of the $q_{t}$ in the
data. Sensitivity check on changing choices of the trimming value is
recommended.} }We find the points of intersection between the linearly
interpolated bootstrap quantile line and the linear interpolation of sample $%
QLR_{n}(\gamma )$ test statistics for $H_{0}:\gamma _{0}=\gamma _{j}$ at
grid points $\gamma _{j}$ consisting of 73 equidistant points and $\widehat{%
\gamma }$, $\widetilde{\gamma }$, as shown in Figure 2 for 90$\%$ confidence
level. 
\begin{figure}[tbp]
\begin{center}
\includegraphics[width=5in]{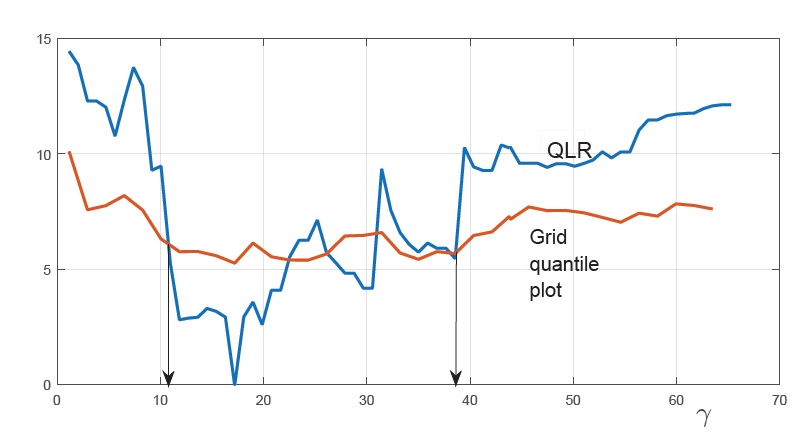}
\end{center}
\caption{$90\%$ grid bootstrap confidence interval for the US}
\end{figure}

As the estimated threshold under the jump model is noticeably small at 17.2$%
\%$, our estimated jump model which suggests insignificance of effect of $%
q_t $ on $y_t$ above the threshold does not necessarily contradict the
Reinhart-Rogoff hypothesis. To see if this could be an indication of
presence of further threshold points, we applied Hansen (1996)'s testing
procedure for presence of threshold effect on the lower and upper subsamples
with 1000 bootstraps and obtained $p$-values of 0.025 and 0.016,
respectively. Hence, we conclude that the US time series data should be
fitted to a threshold regression model with multiple threshold points.

To see if such conclusion holds across different countries, we proceeded by
first applying Hansen (1996)'s test for the presence of threshold effect on
Reinhart and Rogoff's (2010) data for countries with relatively long time
spans without missing observations. For Australia($n=107$) and the UK($n=178 
$), the $p$-values with 1000 bootstraps were 0.795 and 0.98 so we conclude
that there is no threshold effect for these countries in the relationship
between the GDP growth and the debt-to-GDP ratio.

For data from Sweden for the period 1881-2009 ($n=129$), the $p$-value for
Hansen (1996)'s test of presence of threshold effect with 1000 bootstraps
for the whole sample is 0.048, while for the lower and upper regimes,
divided by $\widehat{\gamma}$, they were 0.979 and 0.131, respectively. The
estimated jump model is: 
\begin{equation*}
\widehat{y}_{t}=\left\{ 
\begin{array}{rr}
\underset{(2.17)}{1.12}-\underset{(0.24)}{0.2}y_{t-1}+\underset{(0.11)}{0.13}%
q_{t},\quad \text{if } q_{t}\leq {21.3} &  \\ 
\underset{(0.58)}{1.86}+\underset{(0.11)}{0.48}y_{t-1}-\underset{(0.0082)}{%
0.004}q_{t},\quad \text{if } q_{t}>21.3 & 
\end{array}%
\right.
\end{equation*}%
with the lower regime having 61 observations and upper regime containing 68.
The coefficient of debt-to-GDP ratio is not statistically significant.

The grid bootstrap confidence intervals for $\gamma _{0}$ were (15.3, $%
\infty $) and (16.4, $\infty$) for 95$\%$ and 90$\%$ confidence levels.
Shown in Figures 3 are linear interpolation of 90$\%$ bootstrap quantiles at
27 grid points with 399 bootstraps and linear interpolation of QLR test
statistic at each of 54 grid points. 

\begin{figure}[tbp]
\begin{center}
\includegraphics[width=5in]{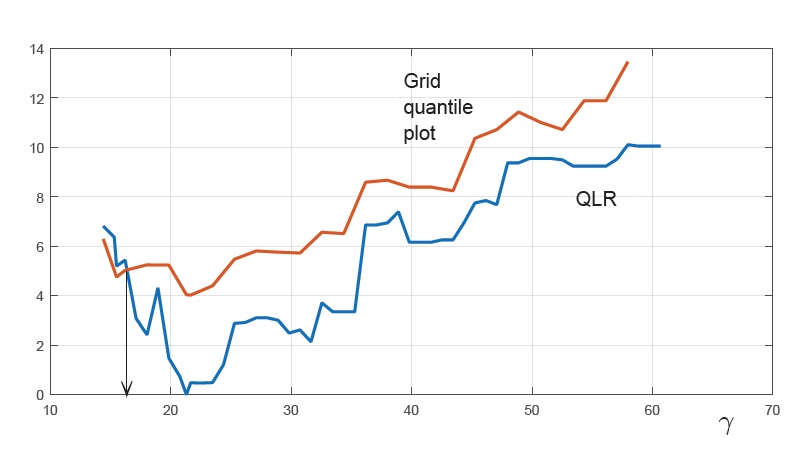}
\end{center}
\caption{$90\%$ grid bootstrap confidence interval for Sweden}
\end{figure}

We conclude that there is substantial heterogeneity across countries in the
relationship between the GDP growth and the debt-to-GDP ratio, not only in
the values of model parameters, but also in the kinds of models that are
suitable.

\section{\textbf{CONCLUSION}}

\label{sec:Conclusion}

This paper has developed unified inferential procedures for the threshold
regression model. The unconstrained least squares estimator of the
regression coefficient $\alpha $ turns out to enjoy the useful oracle
property, which enables the standard asymptotic normal inference as in the
linear regression model. On the other hand, we provide a judiciously
constructed statistic, with which one can make inference of the unknown
threshold without knowing the continuity of the threshold regression model.
Asymptotically valid bootstrap inference is also proposed and shown to
improve the finite sample performance of the asymptotic procedure.

An interesting future research area is extension to the nonparametric
setting. For instance, see Card et al. (2008) and Pan (2015), who use the
regression discontinuity methods \footnote{%
Pan (2015, p.378) and a referee emphasize that this setting is not identical
to the conventional regression discontinuity method (e.g. Angrist and Lavy
(1999); Hahn \textit{et al}. (2001)) due to the lack of knowledge on the
precise location of the discontinuity.} to test for the tipping phenomenon
in racial segregation and gender segregation, respectively, or Landais
(2014), who recommends testing for the location of the change-point as a
validity check for the regression discontinuity design, even when the
change-point is suggested by the institutional knowledge.

\appendix

\section{\textbf{PROOFS OF MAIN THEOREMS}}

Let us introduce some notation first. In what follows $C,C_{1},$... denote
generic positive finite constants, which may vary from line to line or
expression to expression. Recall that $x_{t}=\left( 1,x_{t2}^{\prime
},q_{t}\right) ^{\prime },$ $x_{t1}=\left( 1,x_{t2}^{\prime }\right)
^{\prime }$, and $\mathbf{1}_{t}\left( b\right) =\mathbf{1}\left\{
q_{t}>b\right\} $, and introduce $\mathbf{1}_{t}\left( a;b\right) =\mathbf{1}%
\left\{ a<q_{t}<b\right\} $. Finally, we abbreviate $\psi -\psi _{0}$ by $%
\overline{\psi }$ for any parameter $\psi $.

All the technical lemmas are given in the online supplement to this paper.

\subsection{\textbf{Proof of Proposition \protect\ref{Consistency}}}

$\left. {}\right. $

Without loss of generality we assume that $\widehat{\gamma }\geq \gamma _{0}$
and $\gamma _{0}=0$, so that $\delta _{10}=0\ $and $\delta _{20}=0$ under
Assumption C. By definition, we have that 
\begin{eqnarray*}
{{\mathbb{S}}}_{n}\left( \theta \right) -{{\mathbb{S}}}_{n}\left( \theta
_{0}\right) &=&\frac{1}{n}\sum_{t=1}^{n}\left\{ \left( y_{t}-\alpha ^{\prime
}x_{t}\left( \gamma \right) \right) ^{2}-\varepsilon _{t}^{2}\right\} \\
&=&\frac{1}{n}\sum_{t=1}^{n}\left\{ \left( \overline{\beta }^{\prime }x_{t}+ 
\overline{\delta }^{\prime }x_{t}\mathbf{1}_{t}\left( \gamma \right) +\delta
_{0}^{\prime }x_{t}\mathbf{1}_{t}\left( 0;\gamma \right) +\varepsilon
_{t}\right) ^{2}-\varepsilon _{t}^{2}\right\} \text{.}
\end{eqnarray*}

By standard algebra and denoting $\upsilon =\beta +\delta $, 
\begin{eqnarray*}
&&\overline{\beta }^{\prime }x_{t}+\overline{\delta }^{\prime }x_{t}\mathbf{%
\ 1 }_{t}\left( \gamma \right) +\delta _{0}^{\prime }x_{t}\mathbf{1}%
_{t}\left( 0;\gamma \right) \\
&=&\overline{\upsilon }^{\prime }x_{t}\mathbf{1}_{t}\left( \gamma \right)
+\left( \overline{\beta }+\delta _{0}\right) ^{\prime }x_{t}\mathbf{1}
_{t}\left( 0;\gamma \right) +\overline{\beta }^{\prime }x_{t}\mathbf{1}
_{t}\left( -\infty ;0\right) \text{,}
\end{eqnarray*}
which implies, because of the orthogonality of the terms on the right of the
last displayed expression, that 
\begin{equation*}
{{\mathbb{S}}}_{n}\left( \theta \right) -{{\mathbb{S}}}_{n}\left( \theta
_{0}\right) =\mathbb{A}_{n1}\left( \theta \right) +\mathbb{A}_{n2}\left(
\theta \right) +\mathbb{A}_{n3}\left( \theta \right) +\mathbb{B}_{n1}\left(
\theta \right) +\mathbb{B}_{n2}\left( \theta \right) +\mathbb{B}_{n3}\left(
\theta \right) \text{,}
\end{equation*}
where 
\begin{eqnarray*}
\mathbb{A}_{n1}\left( \theta \right) &=&\overline{\upsilon }^{\prime }\frac{
1 }{n}\sum_{t=1}^{n}x_{t}x_{t}^{\prime }\mathbf{1}_{t}\left( \gamma \right) 
\overline{\upsilon };~\ \ \ \mathbb{A}_{n2}\left( \theta \right) =\overline{
\beta }^{\prime }\frac{1}{n}\sum_{t=1}^{n}x_{t}x_{t}^{\prime }\mathbf{1}
_{t}\left( -\infty ;0\right) \overline{\beta } \\
\mathbb{A}_{n3}\left( \theta \right) &=&\left( \overline{\beta }+\delta
_{0}\right) ^{\prime }\frac{1}{n}\sum_{t=1}^{n}x_{t}x_{t}^{\prime }\mathbf{1}
_{t}\left( 0;\gamma \right) \left( \overline{\beta }+\delta _{0}\right) \\
\mathbb{B}_{n1}\left( \theta \right) &=&\overline{\upsilon }^{\prime }\frac{
2 }{n}\sum_{t=1}^{n}x_{t}\varepsilon _{t}\mathbf{1}_{t}\left( \gamma \right)
;~\ \ \ \ \mathbb{B}_{n2}\left( \theta \right) =\overline{\beta }^{\prime } 
\frac{2}{n}\sum_{t=1}^{n}x_{t}\varepsilon _{t}\mathbf{1}_{t}\left( -\infty
;0\right) \\
\mathbb{B}_{n3}\left( \theta \right) &=&\left( \overline{\beta }+\delta
_{0}\right) ^{\prime }\frac{2}{n}\sum_{t=1}^{n}x_{t}\varepsilon _{t}\mathbf{%
\ 1 }_{t}\left( 0;\gamma \right) \text{.}
\end{eqnarray*}
\textbf{Consistency. }\ It suffices to show that for any $\epsilon >0$, $%
\eta >0$, there is $n_{0}$ such that for all $n>n_{0}$, $\Pr \left\{
\left\Vert \widehat{\theta }-\theta _{0}\right\Vert >\eta \right\} <\epsilon 
$, which is implied by 
\begin{equation}
\Pr \left\{ \inf_{\left\Vert \overline{\theta }\right\Vert >\eta }\sum_{\ell
=1}^{3}E\left( \mathbb{A}_{n\ell }\left( \theta \right) \right) +\mathbb{D}
_{n\ell }\left( \theta \right) \leq 0\right\} <\epsilon \text{,}
\label{consi_2}
\end{equation}
where $\mathbb{D}_{n\ell }\left( \theta \right) =\mathbb{B}_{n\ell }\left(
\theta \right) +\left( \mathbb{A}_{n\ell }\left( \theta \right) -E\left( 
\mathbb{A}_{n\ell }\left( \theta \right) \right) \right) $ for $\ell =1,2,3$.

First $\left\Vert \overline{\theta }\right\Vert >\eta $ implies that either $%
\left( \mathbf{i}\right) $ $\left\Vert \overline{\gamma }\right\Vert >\eta
/3 $ and $\left\Vert \overline{\beta }\right\Vert \leq \eta /3$, or $\left( 
\mathbf{ii}\right) $ $\left\Vert \overline{\beta }\right\Vert >\eta /3$ or $%
\left\Vert \overline{\upsilon }\right\Vert >\eta /3$. When $\left( \mathbf{\
ii}\right) $ holds true, it is clear that 
\begin{equation}
\inf_{\left\Vert \overline{\upsilon }\right\Vert >\eta /3}E\left( \mathbb{A}%
_{n1}\left( \theta \right) \right) >C\eta ^{2}\ \ \ \ \ \text{or \ \ \ }%
\inf_{\left\Vert \overline{\beta }\right\Vert >\eta /3}E\left( \mathbb{A}%
_{n2}\left( \theta \right) \right) >C\eta ^{2}  \label{ineq_2}
\end{equation}%
whereas when $\left( \mathbf{i}\right) $ holds true, we have that 
\begin{equation}
\inf_{\left\Vert \overline{\gamma }\right\Vert >\eta /3,\left\Vert \overline{%
\beta }\right\Vert \leq \eta /3}E\left( \frac{1}{n}\sum_{t=1}^{n}\left(
x_{t}^{\prime }\left( \bar{\beta}+\delta _{0}\right) \right) ^{2}\mathbf{1}%
_{t}\left( 0;\gamma \right) \right) >C\eta ^{3}\text{,}  \label{ineq_3}
\end{equation}%
because Assumption Q implies that $E\left( x_{t}x_{t}^{\prime }\mathbf{1}%
_{t}\left( \gamma \right) \right) $, $E\left( x_{t}x_{t}^{\prime }\mathbf{1}%
_{t}\left( -\infty ;0\right) \right) $ and $E\left( x_{t}x_{t}^{\prime }%
\mathbf{1}_{t}\left( 0;\gamma \right) \right) $ are positive definite
matrices uniformly in $\gamma >\eta $ and $\left\vert \left\vert \bar{\beta}%
+\delta _{0}\right\vert \right\vert >\eta /3$ if $\left\Vert \overline{\beta 
}\right\Vert \leq \eta /3$ because we can always choose $\eta $ such that $%
\left\vert \delta _{0}\right\vert \geq 2\eta /3$. We have that 
\begin{equation}
C_{1}\leq \frac{E\mathbb{A}_{n3}\left( \theta \right) }{\left( \overline{%
\tau }_{1},\overline{\tau }_{2}^{\prime }\right) E\left(
x_{t1}x_{t1}^{\prime }\mathbf{1}_{t}\left( 0;\gamma \right) \right) \left( 
\overline{\tau }_{1},\overline{\tau }_{2}^{\prime }\right) ^{\prime }+%
\overline{\tau }_{3}^{2}E\left( q_{t}^{2}\mathbf{1}_{t}\left( 0;\gamma
\right) \right) }\leq C_{2}\text{,}  \label{pd}
\end{equation}%
where $\overline{\tau }=\left( \beta _{0}-\beta \right) +\delta _{0}$. The
motivation for the last displayed inequality comes from the fact that , say,
implies that $E\left\{ x_{t}x_{t}^{\prime }\mathbf{1}_{t}\left( \gamma
_{1};\gamma _{2}\right) \right\} $ is a strictly positive and finite
definite matrix which implies that for any vector $a^{\prime }=\left(
a_{1}^{\prime },a_{2}\right) $, 
\begin{equation*}
C^{-1}\leq \frac{a^{\prime }E\left\{ x_{t}x_{t}^{\prime }\mathbf{1}%
_{t}\left( \gamma _{1};\gamma _{2}\right) \right\} a}{a_{1}^{\prime
}E\left\{ x_{t1}x_{t1}^{\prime }\mathbf{1}_{t}\left( \gamma _{1};\gamma
_{2}\right) \right\} a_{1}+a_{2}^{2}E\left( q_{t}^{2}\mathbf{1}_{t}\left(
\gamma _{1};\gamma _{2}\right) \right) }\leq C.
\end{equation*}%
So, $\left( \ref{ineq_2}\right) $ and $\left( \ref{ineq_3}\right) $ imply
that 
\begin{equation}
\inf_{\left\Vert \overline{\theta }\right\Vert >\eta }\sum_{\ell
=1}^{3}E\left( \mathbb{A}_{n\ell }\left( \theta \right) \right) >C\eta ^{3}%
\text{.}  \label{ineq_1}
\end{equation}

On the other hand, Lemma \ref{lem:maxineq2} and the uniform law of large
numbers, respectively, imply that 
\begin{equation*}
\sup_{\left\Vert \overline{\theta }\right\Vert >\eta }\left\Vert \mathbb{B}
_{n\ell }\left( \theta \right) \right\Vert =O_{p}\left( n^{-1/2}\right) ~\ \
\ell =1,2,3;\text{ \ \ \ \ }\sup_{\gamma _{1},\gamma _{2}}\left\Vert \mathbb{%
\ \ F}_{n}\left( \gamma _{1};\gamma _{2}\right) \right\Vert =o_{p}\left(
1\right) \text{,}
\end{equation*}
where $\mathbb{F}_{n}\left( \gamma _{1};\gamma _{2}\right) =\frac{1}{n}
\sum_{t=1}^{n}\left( x_{t}x_{t}^{\prime }\mathbf{1}_{t}\left( \gamma
_{1};\gamma _{2}\right) -E\left( x_{t}x_{t}^{\prime }\mathbf{1}_{t}\left(
\gamma _{1};\gamma _{2}\right) \right) \right) $, and hence 
\begin{equation}
\sup_{\left\Vert \overline{\theta }\right\Vert >\eta /3}\left\Vert
\sum_{\ell =1}^{3}\mathbb{D}_{n\ell }\left( \theta \right) \right\Vert
=o_{p}\left( 1\right) \text{.}  \label{ineq_4}
\end{equation}

Thus $\widehat{\theta }-\theta _{0}=o_{p}\left( 1\right) $ because the left
side of $\left( \ref{consi_2}\right) $ is bounded by 
\begin{equation*}
\Pr \left\{ \inf_{\left\Vert \overline{\theta }\right\Vert >\eta }\sum_{\ell
=1}^{3}E\left( \mathbb{A}_{n\ell }\left( \theta \right) \right) \leq
\sup_{\left\Vert \overline{\theta }\right\Vert >\eta /3}\left\Vert
\sum_{\ell =1}^{3}\mathbb{D}_{n\ell }\left( \theta \right) \right\Vert
\right\} \rightarrow 0\text{,}
\end{equation*}
using $\left( \ref{ineq_1}\right) $ and $\left( \ref{ineq_4}\right) $.

\textbf{Convergence Rate. \ }We shall show next that for any $\epsilon >0$
there exist $C>0$, $\eta >0$, $n_{0}$ such that for $n>n_{0}$ we have that 
\begin{equation}
\Pr \left\{ \inf_{\frac{C}{n^{1/2}}<\left\Vert \overline{\upsilon }%
\right\Vert ,\left\Vert \overline{\beta }\right\Vert <\eta ;\frac{C}{n^{1/3}}%
<\left\Vert \overline{\gamma }\right\Vert <\eta }\sum_{\ell =1}^{3}E\left( 
\mathbb{A}_{n\ell }\left( \theta \right) \right) +\mathbb{D}_{n\ell }\left(
\theta \right) \leq 0\right\} <\epsilon \text{.}  \label{rate_1}
\end{equation}%
Since $\Pr \left\{ X_{n}+Y_{n}<0\right\} \leq \Pr \left\{ X_{n}<0\right\}
+\Pr \left\{ Y_{n}<0\right\} $ for any sequence $X_{n}$ and $Y_{n}$ and $%
\inf_{x}\left\{ f\left( x\right) +g\left( x\right) \right\} \geq
\inf_{x}f\left( x\right) +\inf_{x}g\left( x\right) $ for any \ functions $f$
and $g$, it suffices to show that for each $\ell =1,2,3$ 
\begin{equation}
\Pr \left\{ \inf_{\frac{C}{n^{1/2}}<\left\Vert \overline{\upsilon }%
\right\Vert ,\left\Vert \overline{\beta }\right\Vert <\eta ;\frac{C}{n^{1/3}}%
<\left\Vert \overline{\gamma }\right\Vert <\eta }E\left( \mathbb{A}_{n\ell
}\left( \theta \right) \right) /2+\left( \mathbb{A}_{n\ell }\left( \theta
\right) -E\left( \mathbb{A}_{n\ell }\left( \theta \right) \right) \right)
\leq 0\right\} <\epsilon  \label{rate_1a}
\end{equation}%
\begin{equation}
\Pr \left\{ \inf_{\frac{C}{n^{1/2}}<\left\Vert \overline{\upsilon }%
\right\Vert ,\left\Vert \overline{\beta }\right\Vert <\eta ;\frac{C}{n^{1/3}}%
<\left\Vert \overline{\gamma }\right\Vert <\eta }E\left( \mathbb{A}_{n\ell
}\left( \theta \right) \right) /2+\mathbb{B}_{n\ell }\left( \theta \right)
\leq 0\right\} <\epsilon \text{.}  \label{rate_1b}
\end{equation}%
To that end, we shall first examine 
\begin{equation*}
\Pr \left\{ \inf_{\Xi _{j}\left( \upsilon \right) ;\Xi _{j}\left( \beta
\right) ;\Xi _{k}\left( \gamma \right) }E\left( \mathbb{A}_{n\ell }\left(
\theta \right) \right) /2+\mathbb{B}_{n\ell }\left( \theta \right) \leq
0\right\} \text{, \ }\ell =1,2,3\text{,}
\end{equation*}%
where 
\begin{eqnarray}
\Xi _{j}\left( \psi \right) &=&\left\{ \psi :\frac{C}{n^{1/2}}%
2^{j-1}<\left\Vert \overline{\psi }\right\Vert <\frac{C}{n^{1/2}}%
2^{j}\right\} ;~\ \ \ \ j=1,...,\log _{2}\frac{\eta }{C}n^{1/2}  \notag \\
\Xi _{k}\left( \gamma \right) &=&\left\{ \gamma :\frac{C}{n^{1/3}}2^{k-1}<%
\overline{\gamma }<\frac{C}{n^{1/3}}2^{k}\right\} ;\text{ \ \ \ }%
k=1,...,\log _{2}\frac{\eta }{C}n^{1/3}\text{.}  \label{set_jk}
\end{eqnarray}%
Recall that we have assumed that $\gamma \geq 0$, as the case $\gamma \leq 0$
follows similarly.

First by standard arguments, 
\begin{eqnarray}
&&\Pr \left\{ \inf_{\Xi _{j}\left( \upsilon \right) ;\Xi _{k}\left( \gamma
\right) }E\left( \mathbb{A}_{n1}\left( \theta \right) \right) /2+\mathbb{B}
_{n1}\left( \theta \right) \leq 0\right\}  \notag \\
&\leq &\Pr \left\{ \inf_{\Xi _{j}\left( \upsilon \right) }\left\Vert 
\overline{\upsilon }\right\Vert \lambda _{\min }\left( Ex_{t}x_{t}^{\prime } 
\mathbf{1}_{t}\left( 0\right) \right) \leq \sup_{\Xi _{k}\left( \gamma
\right) }\left\Vert \frac{4}{n^{1/2}}\sum_{t=1}^{n}x_{t}\varepsilon _{t} 
\mathbf{1}_{t}\left( \gamma \right) \right\Vert \right\}  \notag \\
&\leq &\Pr \left\{ C2^{j-2}\leq \sup_{\left\{ \gamma :\left\Vert \overline{
\gamma }\right\Vert <\eta \right\} }\left\Vert \frac{1}{n^{1/2}}
\sum_{t=1}^{n}x_{t}\varepsilon _{t}\mathbf{1}_{t}\left( \gamma \right)
\right\Vert \right\}  \label{rate_2} \\
&\leq &C^{-1}2^{-j+2}\eta ^{1/2}  \notag
\end{eqnarray}
by Lemma \ref{lem:maxineq2} and the Markov's inequality. Observe that the
latter inequality is independent of $\Xi _{k}\left( \gamma \right) $. Since $%
\sum_{j=1}^{\infty }2^{-j}<\infty $, the probability in $\left( \ref{rate_1b}
\right) $ can be made arbitrary small for large $C$ or small $\eta $, thus
satisfying the condition $\left( \ref{rate_1b}\right) $. $\left( \ref%
{rate_1a}\right) $ follows similarly as is the case for $\ell =2$ and thus
it is omitted.

We next examine $\left( \ref{rate_1a}\right) $ and $\left( \ref{rate_1b}%
\right) $ for $\ell =3$. Observing $\left( \ref{pd}\right) $ and the
arguments that follow, defining 
\begin{equation*}
\widetilde{\mathbb{A}}_{n3}\left( \theta \right) =\overline{\tau }%
^{2}E\left( q_{t}^{2}\mathbf{1}_{t}\left( 0;\gamma \right) \right) ;\text{ \
\ \ \ \ \ }\widetilde{\mathbb{B}}_{n3}\left( \theta \right) =\overline{\tau }%
\frac{2}{n}\sum_{t=1}^{n}q_{t}\varepsilon _{t}\mathbf{1}_{t}\left( 0;\gamma
\right) \text{,}
\end{equation*}%
it suffices to show $\left( \ref{rate_1a}\right) $ and $\left( \ref{rate_1b}%
\right) $ for $\widetilde{\mathbb{A}}_{n3}\left( \theta \right) $ and \ $%
\widetilde{\mathbb{B}}_{n3}\left( \theta \right) $. To that end, because $%
\overline{\tau }>C_{1}$ as $\left\vert \delta _{30}\right\vert >C_{1}>0$, we
obtain, since $Eq_{t}^{2}\mathbf{1}_{t}\left( 0;\eta \right) \geq C_{1}\eta
^{3}$ 
\begin{eqnarray}
&&\Pr \left\{ \inf_{\Xi _{j}\left( \upsilon \right) ;\Xi _{k}\left( \gamma
\right) }E\left( \widetilde{\mathbb{A}}_{n3}\left( \theta \right) /2\right) +%
\widetilde{\mathbb{B}}_{n3}\left( \theta \right) \leq 0\right\}  \notag \\
&\leq &\Pr \left\{ \inf_{\Xi _{k}\left( \gamma \right) }\left\Vert \tau
_{0}\right\Vert E\left( q_{t}^{2}\mathbf{1}_{t}\left( 0;\gamma \right)
\right) \leq \sup_{\Xi _{k}\left( \gamma \right) }\left\Vert \frac{4}{n}%
\sum_{t=1}^{n}q_{t}\varepsilon _{t}\mathbf{1}_{t}\left( 0;\gamma \right)
\right\Vert \right\}  \notag \\
&\leq &\Pr \left\{ \frac{C}{n}2^{3\left( k-2\right) }\leq \sup_{\Xi
_{k}\left( \gamma \right) }\left\Vert \frac{1}{n}\sum_{t=1}^{n}q_{t}%
\varepsilon _{t}\mathbf{1}_{t}\left( 0;\gamma \right) \right\Vert \right\}
\label{rate_4} \\
&\leq &C^{-1}2^{-3k/2}\text{,}  \notag
\end{eqnarray}%
by Lemma \ref{lem:maxineq2} and Markov's inequality. Notice that this bound
is independent of $\Xi _{j}\left( \upsilon \right) $. But by summability of $%
2^{-3k/2}$, we conclude that $\left( \ref{rate_1b}\right) $ holds true for $%
\ell =3$ by choosing $C$ large enough.

We now conclude the proof after we note that the left side of $\left( \ref%
{rate_1}\right) $ is bounded by 
\begin{eqnarray*}
&&\Pr \left\{ \max_{j,k}\inf_{\Xi _{j}\left( \upsilon \right) ;\Xi
_{j}\left( \beta \right) ;\Xi _{k}\left( \gamma \right) }\sum_{\ell
=1}^{3}\left\{ E\mathbb{A}_{n\ell }\left( \theta \right) +\mathbb{B}_{n\ell
}\left( \theta \right) \right\} \leq 0\right\} \\
&\leq &C^{-1}\left( \sum_{j=1}^{\log _{2}\frac{\eta }{C}n^{1/2}}2^{-2j}+
\sum_{k=1}^{\log _{2}\frac{\eta }{C}n^{1/3}}2^{-3k/2}\right) <\epsilon
\end{eqnarray*}
using $\left( \ref{rate_2}\right) -\left( \ref{rate_4}\right) $.\hfill $%
\blacksquare $

\subsection{\textbf{Proof of Theorem \protect\ref{Th:AD_cube}}}

$\left. {}\right. $

Because the \textquotedblleft $\limfunc{argmin}$\textquotedblright\ is a
continuous mapping, see Kim and Pollard $\left( 1990\right) $, and the
convergence rates of $\widehat{\alpha }$ and $\widehat{\gamma }$ are
obtained in Proposition \ref{Consistency}, it suffices to examine the weak
limit of 
\begin{eqnarray*}
\mathbb{G}_{n}\left( h,g\right) &=&n\left( \mathbb{S}_{n}\left( \alpha _{0}+%
\frac{h}{n^{1/2}},\gamma _{0}+\frac{g}{n^{1/3}}\right) -\mathbb{S}_{n}\left(
\alpha _{0},\gamma _{0}\right) \right) \\
&=&\sum_{t=1}^{n}\left\{ \left( \varepsilon _{t}-\frac{h^{\prime }}{n^{1/2}}%
x_{t}\left( \frac{g}{n^{1/3}}\right) -\delta _{30}q_{t}\mathbf{1}_{t}\left(
0;\frac{g}{n^{1/3}}\right) \right) ^{2}-\varepsilon _{t}^{2}\right\}
\end{eqnarray*}%
over $\left\Vert h\right\Vert ,\left\vert g\right\vert \leq C$, where we
assume $\gamma _{0}=0$ as before for notational convenience and
reparametrize $h=\sqrt{n}\left( \alpha -\alpha _{0}\right) $ and $%
g=n^{1/3}\left( \gamma -\gamma _{0}\right) .$ First, due to the uniform law
of large numbers it follows that 
\begin{equation*}
\sup_{\left\vert g\right\vert \leq C}\left\vert \frac{1}{n}%
\sum_{t=1}^{n}\left\{ x_{t}\left( \frac{g}{n^{1/3}}\right) x_{t}^{\prime
}\left( \frac{g}{n^{1/3}}\right) -\mathbf{x_{t}x_{t}^{\prime }}\right\}
\right\vert =o_{p}\left( 1\right)
\end{equation*}%
whereas Lemma \ref{lem:maxineq2} and the expansion of $E\left\{ x_{t}\left( 
\frac{g}{n^{1/3}}\right) q_{t}\mathbf{1}_{t}\left( 0;\frac{g}{n^{1/3}}%
\right) \right\} $ as in (\ref{cubic exp}) imply that 
\begin{eqnarray*}
\sup_{\left\vert g\right\vert \leq C}\left\vert \frac{1}{\sqrt{n}}%
\sum_{t=1}^{n}\left\{ x_{t}\left( \frac{g}{n^{1/3}}\right) q_{t}\mathbf{1}%
_{t}\left( 0;\frac{g}{n^{1/3}}\right) \right\} \right\vert &=&O_{p}\left(
n^{-1/6}\right) \\
\sup_{\left\vert g\right\vert \leq C}\left\vert \frac{1}{\sqrt{n}}%
\sum_{t=1}^{n}\left( x_{t}\left( \frac{g}{n^{1/3}}\right) -\mathbf{x_{t}}%
\right) \varepsilon _{t}\right\vert &=&O_{p}\left( n^{-1/6}\right) \text{.}
\end{eqnarray*}%
Therefore%
\begin{equation}
\sup_{\left\Vert h\right\Vert ,\left\vert g\right\vert \leq C}\left\vert 
\mathbb{G}_{n}\left( h,g\right) -\widetilde{\mathbb{G}}_{n}\left( h,g\right)
\right\vert =o_{p}\left( 1\right) \text{,}  \label{diff}
\end{equation}%
where 
\begin{eqnarray*}
\widetilde{\mathbb{G}}_{n}\left( h,g\right) &=&\left\{ h^{\prime }\frac{1}{n}%
\sum_{t=1}^{n}\mathbf{x_{t}x_{t}^{\prime }}h-h^{\prime }\frac{2}{n^{1/2}}%
\sum_{t=1}^{n}\mathbf{x_{t}}\varepsilon _{t}\right\} \\
&&+\delta _{30}\left\{ \delta _{30}\sum_{t=1}^{n}q_{t}^{2}\mathbf{1}%
_{t}\left( 0;\frac{g}{n^{1/3}}\right) -2\sum_{t=1}^{n}q_{t}\varepsilon _{t}%
\mathbf{1}_{t}\left( 0;\frac{g}{n^{1/3}}\right) \right\} \\
&=&:\widetilde{\mathbb{G}}_{n}^{1}\left( h\right) +\widetilde{\mathbb{G}}%
_{n}^{2}\left( g\right) \text{.}
\end{eqnarray*}%
The consequence of $\left( \ref{diff}\right) $ is then that the minimizer of 
$\mathbb{G}_{n}\left( h,g\right) $ is asymptotically equivalent to that of $%
\widetilde{\mathbb{G}}_{n}\left( h,g\right) $. Thus, it suffices to show the
weak convergence of $\widetilde{\mathbb{G}}_{n}^{1}\left( h\right) $ and $%
\widetilde{\mathbb{G}}_{n}^{2}\left( g\right) $ and that 
\begin{equation*}
\widetilde{h}=:\arg \min_{h\in \mathbb{R}}\widetilde{\mathbb{G}}%
_{n}^{1}\left( h\right) ;\text{ \ \ \ \ }\widetilde{g}:=\underset{g\in 
\mathbb{R}}{\func{argmin}}\widetilde{\mathbb{G}}_{n}^{2}\left( g\right) 
\text{ }
\end{equation*}%
are $O_{p}\left( 1\right) $. The convergence of $\widetilde{\mathbb{G}}%
_{n}^{1}\left( h\right) $ and its minimization is straightforward since it
is a quadratic function of $h.$

Next, the first term of $\widetilde{\mathbb{G}}_{n}^{2}\left( g\right) $
converges to $3^{-1}\delta _{30}^{2}f\left( 0\right) \left\vert g\right\vert
^{3}$ uniformly in probability because Lemma \ref{lem:maxineq2}, i.e. (\ref%
{eq:maxineq2b}), implies the uniform law of large numbers and the Taylor
series expansion up to the third order yields 
\begin{equation}
nEq_{t}^{2}\mathbf{1}_{t}\left( 0;\frac{g}{n^{1/3}}\right) =n\int_{0}^{\frac{
g}{n^{1/3}}}q^{2}f\left( q\right) dq=n\frac{2f\left( \frac{\widetilde{g}}{
n^{1/3}}\right) }{3!}\left( \frac{g}{n^{1/3}}\right) ^{3}\rightarrow
3^{-1}f\left( 0\right) g^{3}\text{,}  \label{cubic exp}
\end{equation}
where $\widetilde{g}\in \left( 0,g\right) $. When $g<0$, it follows
similarly as in this case the derivative should be multiplied by $-1$, so
that the limit becomes $3^{-1}f\left( 0\right) \left\vert g\right\vert ^{3}$.

The second term in the definition of $\widetilde{\mathbb{G}}_{n}^{2}\left(
g\right) ,$ that is $-2\sum_{t=1}^{n}q_{t}\varepsilon _{t}\mathbf{1}%
_{t}\left( 0;\frac{g}{n^{1/3}}\right) $ converges weakly to $2\delta _{30}%
\sqrt{3^{-1}f\left( 0\right) \sigma _{\varepsilon }^{2}\left( 0\right) }%
W\left( g^{3}\right) $. To see this note that Lemma \ref{lem:maxineq2}, i.e.
(\ref{eq:maxineq2a}), yields the tightness of the process as explained in
Remark \ref{rem:tight}. For the finite dimensional convergence, we can
verify the conditions for martingale difference sequence CLT (e.g. Hall and
Heyde's (1980) Theorem 3.2). In particular, we need to show that for $u_{nt}=%
\sqrt{n}q_{t}\varepsilon _{t}\mathbf{1}_{t}\left( 0;\frac{g}{n^{1/3}}\right) 
$, 
\begin{eqnarray*}
&&\left( i\right) \ \ \ n^{-1/2}\max_{1\leq t\leq n}\left\vert
u_{nt}\right\vert \overset{p}{\longrightarrow }0 \\
&&\left( ii\right) \ \ \frac{1}{n}\sum_{t=1}^{n}u_{nt}^{2}\overset{p}{%
\longrightarrow }\frac{1}{3}E(\varepsilon _{t}^{2}|q_{t}=0)f(0)g^{3}
\end{eqnarray*}%
For $\left( i\right) $, note that $En^{-2}\max_{t}\left\vert
u_{nt}\right\vert ^{4}\leq n^{-1}E\left\vert u_{nt}\right\vert
^{4}=nEq_{t}^{4}\varepsilon _{t}^{4}\mathbf{1}_{t}\left( 0;\frac{g}{n^{1/3}}%
\right) \rightarrow 0$ as $n\rightarrow \infty $. For $\left( ii\right) $,
apply the same argument for the first term in $\widetilde{\mathbb{G}}%
_{n}^{2}\left( g\right) $ and an expansion similar to that in $\left( \ref%
{cubic exp}\right) $. We now characterize the covariance kernel. To that
end, we note that if $g_{1}$ and $g_{2}$ have different signs then the cross
product becomes zero and for $g_{2}>g_{1}>0$, similarly as with $\left( \ref%
{cubic exp}\right) $, we have that 
\begin{equation*}
nE\left( \varepsilon _{t}^{2}\left( q_{t}-\gamma _{0}\right) ^{2}\mathbf{1}%
\left\{ \frac{g_{1}}{n^{1/3}}<q_{t}<\frac{g_{2}}{n^{1/3}}\right\} \right) =%
\frac{f\left( \gamma _{0}\right) }{3}\sigma _{\varepsilon }^{2}\left( \gamma
_{0}\right) \left( g_{2}^{3}-g_{1}^{3}\right) +o\left( 1\right) \text{.}
\end{equation*}%
The cases for $g_{1}>g_{2}>0\ $or $g_{2}<g_{1}<0$ are similar and thus
omitted.

Finally, the covariance between $n^{-1/2}\sum_{t=1}^{n}\mathbf{x_{t}}%
\varepsilon _{t}$ and $\sum_{t=1}^{n}q_{t}\varepsilon _{t}\mathbf{1}%
_{t}\left( 0;g/n^{1/3}\right) $ vanishes for the same reasoning, yielding
the independence between $\widetilde{h}$ and $\widetilde{g}$ and thus the
asymptotic independence between $\widehat{\alpha }$ and the threshold
estimator $\widehat{\gamma }$.\hfill $\blacksquare $

\subsection{\textbf{Proof of Proposition \protect\ref{Th:QLR}}}

$\left. {}\right. $

Due to the asymptotic independence between $\widehat{\alpha }$ and $\widehat{
\gamma }$ in Theorem \ref{Th:AD_cube}, see (\ref{diff}) in its proof, we
have that 
\begin{equation*}
n\left( {{\mathbb{S}}}_{n}\left( \widehat{\alpha }\left( \gamma _{0}\right)
;\gamma _{0}\right) -{{\mathbb{S}}}_{n}\left( \widehat{\alpha };\widehat{
\gamma }\right) \right) =n\left( {{\mathbb{S}}}_{n}\left( \alpha _{0};\gamma
_{0}\right) -{{\mathbb{S}}}_{n}\left( \alpha _{0};\widehat{\gamma }\right)
\right) +o_{p}\left( 1\right) \text{,}
\end{equation*}%
which corresponds to $\min_{g}\widetilde{\mathbb{G}}_{n}^{2}\left( g\right) $
in the proof of Theorem \ref{Th:AD_cube} due to the reparameterization $%
g=n^{1/3}\left( \gamma -\gamma _{0}\right) $. It also shows that 
\begin{equation*}
\min_{g}\widetilde{\mathbb{G}}_{n}^{2}\left( g\right) \overset{d}{
\longrightarrow }f\left( \gamma _{0}\right) \min_{g\in \mathbb{R}}\left(
2\delta _{30}\sqrt{3^{-1}f\left( \gamma _{0}\right) \sigma _{\varepsilon
}^{2}\left( \gamma _{0}\right) }W\left( g^{3}\right) +3^{-1}\delta
_{30}^{2}f\left( \gamma _{0}\right) \left\vert g\right\vert ^{3}\right) .
\end{equation*}%
Finally, the desired result follows from applying the change of variables $%
g^{3}=3\phi \sigma _{\varepsilon }^{2}\left( \gamma _{0}\right) /\delta
_{30}^{2}f\left( \gamma _{0}\right) $ because of the distributional
equivalence $W\left( a^{2}g\right) =^{d}aW\left( g\right) $ (and $W\left(
s\right) =^{d}-W\left( s\right) $) and the fact that $\min_{x}g\left(
x\right) =-\max_{x}-g\left( x\right) $ for any function $g$. \hfill $%
\blacksquare $

\subsection{\textbf{Proof of Theorem \protect\ref{Th:Gamma_s}}}

$\left. {}\right. $

It is known that the distribution function of $\max_{g\in \mathbb{R}}\left(
2W\left( g\right) -\left\vert g\right\vert \right) $ is $F$, as in Hansen
(2000). Thus, under Assumption C, Propositions \ref{Th:QLR} and \ref%
{Prop:xihat} yield the conclusion, while under Assumption J, Theorem 2 of
Hansen (2000) verified the conclusion. \hfill $\blacksquare $

\subsection{\textbf{Proof of Theorem \protect\ref{Th:AD_cubeBoot}}}

$\left. {}\right. $

Recalling our definition of $\widehat{\alpha }^{\ast }$ and $\widehat{\gamma 
}^{\ast }$ in $\left( \ref{theta_star}\right) $, we begin by showing their
consistency and rate of convergence, which is given in Proposition \ref%
{Prop:ConsistencyBoot}.

We now discuss the asymptotic distribution of the bootstrap estimators. We
begin with part $\left( \mathbf{a}\right) $. We assume $\gamma _{0}=0$ to
simplify notation. Because the \textquotedblleft $\arg \max $%
\textquotedblright\ is continuous as mentioned in Theorem 2, it suffices to
examine the weak limit of 
\begin{eqnarray*}
\mathbb{G}_{n}^{\ast }\left( h,g\right) &=&n\left( \mathbb{S}_{n}^{\ast
}\left( \widetilde{\alpha }+\frac{h}{n^{1/2}},\frac{g}{n^{1/3}}\right) - 
\mathbb{S}_{n}^{\ast }\left( \widetilde{\alpha },0\right) \right) \\
&=&\sum_{t=1}^{n}\left\{ \left( \frac{h^{\prime }}{n^{1/2}}x_{t}\left( \frac{
g}{n^{1/3}}\right) +\widetilde{\delta }^{\prime }q_{t}\mathbf{1}_{t}\left(
0; \frac{g}{n^{1/3}}\right) +\varepsilon _{t}^{\ast }\right)
^{2}-\varepsilon _{t}^{\ast 2}\right\} \text{,}
\end{eqnarray*}%
where $\left\Vert h\right\Vert ,\left\vert g\right\vert \leq C$.

First, recall that $\widetilde{\delta }_{1}=O_{p}\left( n^{-1/2}\right) $
and $\widetilde{\delta }_{2}=O_{p}\left( n^{-1/2}\right) $ under Assumption
C and note that Lemma \ref{lem:maxineq2} and Lemma \ref{lem:max_boot} imply
that, uniformly in $\left\Vert h\right\Vert ,\left\vert g\right\vert <C$, 
\begin{eqnarray*}
\frac{1}{n}\sum_{t=1}^{n}\left\{ x_{t}\left( \frac{g}{n^{1/3}}\right)
x_{t}^{\prime }\left( \frac{g}{n^{1/3}}\right) -\mathbf{x}_{t}\mathbf{x}
_{t}^{\prime }\right\} &=&O_{p}\left( n^{-1/3}\right) \\
\frac{1}{n^{1/2}}\sum_{t=1}^{n}\left\{ x_{t}\left( \frac{g}{n^{1/3}}\right)
q_{t}\mathbf{1}_{t}\left( 0;\frac{g}{n^{1/3}}\right) \right\} &=&O_{p}\left(
n^{-1/6}\right) \\
E^{\ast }\left\Vert \frac{1}{n^{1/2}}\sum_{t=1}^{n}\left( x_{t}\left( \frac{
g }{n^{1/3}}\right) -\mathbf{x}_{t}\right) \varepsilon _{t}^{\ast
}\right\Vert ^{2} &=&O_{p}\left( n^{-1/3}\right) \text{.}
\end{eqnarray*}%
Thus, the latter implies that 
\begin{equation}
E^{\ast }\sup_{h,g\in \mathbb{R}}\left\vert \mathbb{G}_{n}^{\ast }\left(
h,g\right) -\widetilde{\mathbb{G}}_{n}^{\ast }\left( h,g\right) \right\vert
=O_{p}\left( n^{-1/6}\right) \text{,}  \label{diffboot}
\end{equation}%
where 
\begin{eqnarray*}
\widetilde{\mathbb{G}}_{n}^{\ast }\left( h,g\right) &=&\left\{ h^{\prime } 
\frac{1}{n}\sum_{t=1}^{n}\mathbf{x}_{t}\mathbf{x}_{t}^{\prime }h+h^{\prime } 
\frac{1}{n^{1/2}}\sum_{t=1}^{n}\mathbf{x}_{t}\varepsilon _{t}^{\ast }\right\}
\\
&&+\widetilde{\delta }_{3}\left\{ \widetilde{\delta }_{3}
\sum_{t=1}^{n}q_{t}^{2}\mathbf{1}_{t}\left( 0;\frac{g}{n^{1/3}}\right)
+\sum_{t=1}^{n}q_{t}\varepsilon _{t}^{\ast }\mathbf{1}_{t}\left( 0;\frac{g}{
n^{1/3}}\right) \right\} \\
&=&:\widetilde{\mathbb{G}}_{1n}^{\ast }\left( h\right) +\widetilde{\mathbb{G}
}_{2n}^{\ast }\left( g\right) \text{.}
\end{eqnarray*}%
The consequence of $\left( \ref{diffboot}\right) $ is then that the
minimizer of $\mathbb{G}_{n}^{\ast }\left( h,g\right) $ is asymptotically
equivalent to that of $\widetilde{\mathbb{G}}_{n}^{\ast }\left( h,g\right) $%
. Thus, it suffices to show the weak convergence of $\widetilde{\mathbb{G}}%
_{1n}^{\ast }\left( h\right) $ and $\widetilde{\mathbb{G}}_{2n}^{\ast
}\left( g\right) $ and that 
\begin{equation*}
\widetilde{h}=:\arg \max_{h\in \mathbb{R}}\widetilde{\mathbb{G}}_{1n}^{\ast
}\left( h\right) ;\text{ \ \ \ \ }\widetilde{g}=:\arg \max_{g\in \mathbb{R}}%
\widetilde{\mathbb{G}}_{2n}^{\ast }\left( g\right) \text{ }
\end{equation*}%
are $O_{p^{\ast }}\left( 1\right) $. The convergence of $\widetilde{\mathbb{%
\ \ G }}_{1n}^{\ast }\left( h\right) $ and its minimization follows by
standard arguments as it is a quadratic function of $h$ so that it suffices
to examine $\widetilde{\mathbb{G}}_{2n}^{\ast }\left( g\right) $ and it
minimum.

Turning to the second term in the definition of $\widetilde{\mathbb{G}}%
_{2n}^{\ast }\left( g\right) ,$ we show that it converges to $2\delta _{30}%
\sqrt{3^{-1}f\left( 0\right) \sigma _{\varepsilon }^{2}\left( 0\right) }%
W\left( g^{3}\right) $ weakly (in probability). To this end, note that Lemma %
\ref{lem:max_boot}'s, and the Remark 4 that follows, yields the tightness of
the process as explained in Remark \ref{rem:tight}. For the finite
dimensional convergence, it follows by standard arguments as 
\begin{equation*}
E^{\ast }\left( \sum_{t=1}^{n}q_{t}\varepsilon _{t}^{\ast }\mathbf{1}%
_{t}\left( 0;\frac{g}{n^{1/3}}\right) \right) ^{2}=\sum_{t=1}^{n}q_{t}^{2}%
\widehat{\varepsilon }_{t}^{2}\mathbf{1}_{t}\left( 0;\frac{g}{n^{1/3}}\right)
\end{equation*}%
which converges in probability to $3^{-1}f\left( 0\right) \sigma
_{\varepsilon }^{2}\left( 0\right) g^{3}$ and the Lindeberg's condition
follows easily.

Part $\left( \mathbf{b}\right) $ is also proved similarly and thus omitted
for the sake of space. \hfill $\blacksquare $

\subsection{\textbf{Proof of Theorem \protect\ref{Th:QLRBoot}}}

$\left. {}\right. $

This is a direct consequence of Theorem \ref{Th:AD_cubeBoot} and Proposition %
\ref{Prop:xihatstar} and the same arguments as the proof of Theorem \ref%
{Th:QLR}. \hfill $\blacksquare $


\newpage


\renewcommand\thesection{B-\arabic{section}} \setcounter{section}{0}

\renewcommand\thepage{A-\arabic{page}} \setcounter{page}{1}

\noindent {\Large \textbf{Online Supplement to \textquotedblleft Robust
Inference in Threshold Regression Models\textquotedblright }}

\begin{center}
by Javier Hidalgo, Jungyoon Lee, and Myung Hwan Seo \vspace{0.2in}

\parbox{5in}{
This supplement contains  more numerical results for Section \ref{sec:MC} and the remaining proofs of main theorems and supporting lemmas.}
\end{center}


\section{Table 4 for Monte Carlo study in Section \protect\ref{sec:MC}}

\begin{table}[htbp]
\caption{Monte Carlo size of test $H_{0}:\protect\gamma =\protect\gamma _{0}$
and coverage probability of confidence intervals of $\protect\gamma _{0}$,
model A: $q_{t}\neq x_{t}$, homoscedastic error, $\protect\varphi=0$ }%
{\small \centering
\setlength{\tabcolsep}{1pt} }
\par
{\small \ 
\begin{tabular}{cc|r|rrr|c|rrr|rrr}
\hline
&  & \multicolumn{4}{|c|}{Size} & \multicolumn{7}{|c}{Coverage Probability}
\\ \hline
&  & $\gamma _{0}$ & \multicolumn{3}{|c|}{median of $q_{t}$(2)} & $\gamma
_{0}$ & \multicolumn{3}{|c}{median of $q_{t}$(2)} & \multicolumn{3}{|c}{
third quart. of $q_{t}$(2.674)} \\ 
$\delta$ &  & $s$\textbackslash {}$n$ & 100 & 250 & 500 & 
\multicolumn{1}{|r|}{$\zeta $\textbackslash{}$n$} & 100 & 250 & 500 & 100 & 
250 & 500 \\ \hline
$\sqrt{10}/4$ & Asym & 0.01 & 0.0033 & 0.0032 & 0.002 & \multicolumn{1}{|r|}{
0.9} & 0.969 & 0.976 & 0.971 & 0.969 & 0.979 & 0.975 \\ 
(=0.7906) &  & 0.05 & 0.0133 & 0.0109 & 0.0093 & \multicolumn{1}{|r|}{0.95}
& 0.987 & 0.988 & 0.987 & 0.98 & 0.991 & 0.986 \\ 
&  & 0.1 & 0.0266 & 0.0219 & 0.0203 & \multicolumn{1}{|r|}{0.99} & 0.999 & 
0.998 & 0.998 & 0.998 & 0.999 & 0.997 \\ 
& B/rap & 0.01 & 0.0104 & 0.0173 & 0.0114 & \multicolumn{1}{|r|}{0.9} & 0.837
& 0.859 & 0.836 & 0.839 & 0.848 & 0.843 \\ 
&  & 0.05 & 0.0691 & 0.0713 & 0.0674 & \multicolumn{1}{|r|}{0.95} & 0.87 & 
0.901 & 0.868 & 0.87 & 0.883 & 0.875 \\ 
&  & 0.1 & 0.1353 & 0.1358 & 0.1276 & \multicolumn{1}{|r|}{0.99} & 0.935 & 
0.936 & 0.925 & 0.926 & 0.933 & 0.928 \\ \hline
0.25 & Asym & 0.01 & 0.016 & 0.0074 & 0.0075 & \multicolumn{1}{|r|}{0.9} & 
0.88 & 0.909 & 0.93 & 0.879 & 0.925 & 0.931 \\ 
&  & 0.05 & 0.0599 & 0.0402 & 0.0322 & \multicolumn{1}{|r|}{0.95} & 0.938 & 
0.95 & 0.972 & 0.927 & 0.958 & 0.961 \\ 
&  & 0.1 & 0.1102 & 0.076 & 0.0648 & \multicolumn{1}{|r|}{0.99} & 0.985 & 
0.992 & 0.993 & 0.982 & 0.994 & 0.984 \\ 
& B/rap & 0.01 & 0.0146 & 0.0075 & 0.0121 & \multicolumn{1}{|r|}{0.9} & 0.873
& 0.876 & 0.894 & 0.851 & 0.896 & 0.897 \\ 
&  & 0.05 & 0.0585 & 0.0518 & 0.0563 & \multicolumn{1}{|r|}{0.95} & 0.934 & 
0.93 & 0.939 & 0.916 & 0.949 & 0.943 \\ 
&  & 0.1 & 0.1123 & 0.1024 & 0.1117 & \multicolumn{1}{|r|}{0.99} & 0.984 & 
0.986 & 0.992 & 0.975 & 0.987 & 0.981 \\ \hline
\end{tabular}
}{\small \ } \raggedright{\small \ Size results for test of $H_{0}:\gamma
=\gamma _{0}$ with nominal size $s$ based on Hansen $(2000)$'s asymptotic
distribution(Asym), and bootstrap(B/rap). Coverage probability results for $%
\gamma _{0}$ with asymptotic confidence interval based on Hansen $(2000)$
and grid bootstrap confidence interval, with nominal confidence level $\zeta 
$. }
\end{table}

In Table 4, we report Monte Carlo size and coverage probability results for $%
\gamma$ when $\varphi=0$ with $\delta$ fixed at $\sqrt{10}/4=0.7906$ and $%
0.25$ in setting A ($q_t\neq x_t$) with homoscedastic error. In Table 2 of
Hansen (2000), Monte Carlo coverage probability of his asymptotic confidence
interval is reported in a similar setup. He found that coverage rates
increase with larger $\delta$ and larger $n$, significantly above the
nominal rate. Similar results are reported for Hansen's asymptotic method in
our Table 4: for $\delta=0.7906$, under-sizing of test $H_0: \gamma=\gamma_0$
and over-coverage of confidence intervals for $\gamma$ are severe for all $n$%
. For $\delta=0.25$, the under-sizing and over-coverage become an issue for
larger $n=250,500$. On the other hand, our bootstrap method for the case $%
\delta=0.7906$ led to some over-sizing and severe under-coverage for all $n$%
. For $\delta=0.25$, results were more satisfactory, with the Monte Carlo
size being close to the nominal size for all $n$, and the coverage
probability approaching the nominal level with larger $n$.

\section{Proofs of Propositions 3 and 4 and Proposition \protect\ref%
{Prop:ConsistencyBoot}}

\subsection{\textbf{Proof of Proposition \protect\ref{Prop:xihat}}}

$\left. {}\right. $

Recalling our notation in $\left( \ref{x_not}\right) $ and that $\delta
_{1}+\delta _{3}\gamma _{0}=0$ and $\delta _{2}=0$ under Assumption C, we
then have that 
\begin{equation}
\widehat{\delta }^{\prime }x_{t}=\left( \widehat{\delta }_{1}-\delta
_{1}\right) +\widehat{\delta }_{2}^{\prime }x_{2t}+\left( \widehat{\delta }%
_{3}-\delta _{3}\right) q_{t}+\delta _{3}\left( q_{t}-\gamma _{0}\right) 
\text{.}  \label{deltax}
\end{equation}%
Because we can rename $q_{t}-\gamma _{0}$ as $q_{t}$, we shall assume
without loss of generality that $\gamma _{0}=0$ so that $\delta _{1}=0$.

Consider the case where $\widehat{\gamma }>0$. The proof when $\widehat{
\gamma }<0$ is analogous and thus it is omitted. By construction, we have
that%
\begin{equation*}
\widehat{\varepsilon }_{t}=\varepsilon _{t}+\left( \widehat{\beta }-\beta
\right) ^{\prime }x_{t}+\left( \widehat{\delta }-\delta \right) ^{\prime
}x_{t}\boldsymbol{1}_{t}\left( \widehat{\gamma }\right) +\delta _{3}q_{t}%
\boldsymbol{1}_{t}\left( 0;\widehat{\gamma }\right) \text{.}
\end{equation*}%
Because $\left( \delta _{1},\delta _{2}^{\prime }\right) =0$ and $\widehat{
\beta }-\beta =O_{p}\left( n^{-1/2}\right) $, $\widehat{\delta }-\delta
=O_{p}\left( n^{-1/2}\right) $ and $\widehat{\gamma }=O_{p}\left(
n^{-1/3}\right) $, we obtain that%
\begin{eqnarray}
\widehat{\varepsilon }_{t}^{2} &=&\varepsilon _{t}^{2}+O_{p}\left(
n^{-1}\right) +\left( \delta _{3}q_{t}\right) ^{2}\boldsymbol{1}_{t}\left( 0;%
\widehat{\gamma }\right) +2\delta _{3}\varepsilon _{t}q_{t}\boldsymbol{1}%
_{t}\left( 0;\widehat{\gamma }\right)  \notag \\
&&+O_{p}\left( n^{-1/2}\right) \varepsilon _{t}x_{t}\left( 1+\boldsymbol{1}%
_{t}\left( \widehat{\gamma }\right) \right) +2\delta _{3}\left\Vert
x_{t}\right\Vert q_{t}\boldsymbol{1}_{t}\left( 0;\widehat{\gamma }\right)
O_{p}\left( n^{-1/2}\right)  \notag \\
&=&\varepsilon _{t}^{2}+O_{p}\left( n^{-1/2}\right) \left\Vert
x_{t}\right\Vert \varepsilon _{t}+2\delta _{3}\varepsilon _{t}q_{t}%
\boldsymbol{1}_{t}\left( 0;\widehat{\gamma }\right) +\left\Vert
x_{t}\right\Vert O_{p}\left( n^{-2/3}\right) \text{.}  \label{eps}
\end{eqnarray}%
Now $\left( \ref{deltax}\right) $ implies that $\left( \widehat{\delta }%
^{\prime }x_{t}\right) ^{2}=\delta _{3}^{2}q_{t}^{2}+O_{p}\left(
n^{-1/2}\right) \delta _{3}\left\Vert x_{t}\right\Vert q_{t}+O_{p}\left(
n^{-1}\right) $. So, by Lemma \ref{lem:4proposition} and \ref%
{lem:4proposition_1} and by the standard arguments using $na^{3}\rightarrow
\infty $, we conclude that the behaviour of numerator of $\left( \ref{xhihat}%
\right) $ is that of 
\begin{equation*}
\frac{1}{na^{3}}\sum_{t=1}^{n}\delta _{3}^{2}q_{t}^{2}\varepsilon
_{t}^{2}K\left( \frac{q_{t}-\widehat{\gamma }}{a}\right) =\kappa _{2}\delta
_{3}^{2}a^{2}\sigma ^{2}\left( 0\right) f\left( 0\right) \left(
1+o_{p}\left( 1\right) \right)
\end{equation*}%
when $\kappa _{2}\neq 0$, that is we do not assume higher-order kernels.
Observe that $g_{0}\left( q\right) \ $in Lemma \ref{lem:4proposition}
corresponds to $\sigma ^{2}\left( q\right) $. More specifically, the
contribution due to other terms in $\left( \ref{eps}\right) $ are indeed
negligible by Lemma \ref{lem:4proposition_1}.

Similarly, the leading term in the denominator in $\left( \ref{xhihat}%
\right) $ is%
\begin{equation*}
\frac{1}{na^{3}}\sum_{t=1}^{n}\left( \widehat{\delta }^{\prime }x_{t}\right)
^{2}K\left( \frac{q_{t}-\widehat{\gamma }}{a}\right) =\kappa _{2}\delta
_{3}^{2}a^{2}f\left( 0\right) \left( 1+o_{p}\left( 1\right) \right) \text{.}
\end{equation*}%
So, the convergence in $\left( \ref{xhihat}\right) $ follows from the last
two displayed expressions. Finally, it is standard to show that $\mathbb{S}%
_{n}(\widehat{\theta })-\sigma ^{2}=o_{p}\left( 1\right) $. This completes
the proof of the proposition.\hfill $\blacksquare $

\subsection{\textbf{Proof of Proposition \protect\ref{Prop:xihatstar}}}

$\left. {}\right. $

As before we assume $\gamma_0 = 0 $. We show this proposition under
Assumption C and the case with Assumption J is similar and thus omitted. Let 
$\widehat{\gamma }^{\ast }>0$. The case when $\widehat{\gamma }^{\ast }<0 $
is analogous and thus omitted. We shall examine the behaviour of the
numerator of $\left( \ref{xhihatBoot}\right) $, that of its denominator
being similarly handled. By construction, 
\begin{equation*}
\widehat{\varepsilon }_{t}^{\ast }=\varepsilon _{t}^{\ast }+\left( \widehat{
\beta }^{\ast }-\widetilde{\beta }\right) ^{\prime }x_{t}+\left( \widehat{
\delta }^{\ast }-\widetilde{\delta }\right) ^{\prime }x_{t}\boldsymbol{1}%
_{t}\left( \widehat{\gamma }^{\ast }\right) +\left( \widetilde{\delta }_{1}+%
\widetilde{\delta }_{3}q_{t}\right) \boldsymbol{1}_{t}\left( 0;\widehat{
\gamma }^{\ast }\right) \text{.}
\end{equation*}%
Recall that when the constraint given in $\left( \ref{eq:conti}\right) $
holds true $\widetilde{\delta }_{2}$ and $\widetilde{\delta }_{1}$ are both $%
O_{p}\left( n^{-1/2}\right) $. On the other hand Proposition \ref%
{Prop:ConsistencyBoot} yields that $\widehat{\beta }^{\ast }-\widetilde{%
\beta }=O_{p^{\ast }}\left( n^{-1/2}\right) $, $\widehat{\delta }^{\ast }-%
\widetilde{\delta }=O_{p^{\ast }}\left( n^{-1/2}\right) $ and $\widehat{
\gamma }^{\ast }=O_{p^{\ast }}\left( n^{-1/3}\right) $. Then, $\left( 
\widehat{\delta }^{\ast \prime }x_{t}\right) ^{2}=\widetilde{\delta }%
^{\prime 2}x_{t}^{2}+O_{p^{\ast }}\left( n^{-1/2}\right) \widetilde{\delta }%
^{\prime }x_{t}q_{t}+O_{p^{\ast }}\left( n^{-1}\right) $. And, proceeding as
we did in the proof of Proposition \ref{Prop:xihat}, we easily deduce that%
\begin{equation}
\widehat{\varepsilon }_{t}^{\ast 2}=\varepsilon _{t}^{\ast 2}+O_{p^{\ast
}}\left( n^{-1/2}\right) x_{t}\varepsilon _{t}^{\ast }+2\widetilde{\delta }
_{3}\varepsilon _{t}^{\ast }q_{t}\boldsymbol{1}_{t}\left( 0;\widehat{\gamma }
^{\ast }\right) +x_{t}O_{p^{\ast }}\left( n^{-2/3}\right) \text{.}
\label{epsBoot}
\end{equation}
By obvious arguments and those in $\left( \ref{eps_1Boot}\right) $, it
suffices to examine the behaviour of 
\begin{equation*}
\frac{1}{na}\sum_{t=1}^{n}\left( \widetilde{\delta }^{\prime }x_{t}\right)
^{2}\varepsilon _{t}^{\ast 2}K\left( \frac{q_{t}-\widehat{\gamma }^{\ast }}{
a }\right) \text{.}
\end{equation*}
Now, because $\widetilde{\delta }_{2}$ and $\widetilde{\delta }_{1}$ are
both $O_{p}\left( n^{-1/2}\right) $ when $\left( \ref{eq:conti}\right) $
holds true the behaviour of the last displayed expression is governed by 
\begin{equation*}
\frac{1}{na}\sum_{t=1}^{n}\widetilde{\delta }_{3}^{2}q_{t}^{2}\varepsilon
_{t}^{\ast 2}K\left( \frac{q_{t}-\widehat{\gamma }^{\ast }}{a}\right)
\end{equation*}
which is $\kappa _{2}\delta _{30}^{2}a^{2}E^{\ast }\left[ \varepsilon
_{t}^{\ast 2}\mid q_{t}=\gamma _{0}\right] f\left( 0\right) \left(
1+o_{p^{\ast }}\left( 1\right) \right) $ by Lemma \ref{lem:4propositionBoot}
when $\kappa _{2}\neq 0$, that is we do not assume higher-order kernels.
Notice that, by standard results, the contribution due to other terms in $%
\left( \ref{epsBoot}\right) $ are indeed negligible by Lemma \ref%
{lem:4proposition_1Boot}.

Likewise the denominator in $\left( \ref{xhihatBoot}\right) $, is 
\begin{equation*}
\frac{1}{na}\sum_{t=1}^{n}\left( \widetilde{\delta }^{\prime }x_{t}\right)
^{2}K\left( \frac{q_{t}-\widehat{\gamma }^{\ast }}{a}\right) =\kappa
_{2}\delta _{30}^{2}a^{2}f\left( 0\right) \left( 1+o_{p^{\ast }}\left(
1\right) \right) \text{.}
\end{equation*}
So, the convergence in $\left( \ref{xhihatBoot}\right) $ follows from the
last two displayed expressions. Finally, it is standard that $\mathbb{S}
_{n}( \widehat{\theta }^{\ast })-\sigma ^{2}=o_{p^{\ast }}\left( 1\right) $.
This completes the proof of the proposition.\hfill $\blacksquare $

\subsection{Convergence Rate of Bootstrap Estimator}

\begin{proposition}
\label{Prop:ConsistencyBoot}Suppose that Assumptions Z and Q hold. Then, 
\newline
$\left( \mathbf{a}\right) $ Under Assumption C, 
\begin{equation*}
\widehat{\alpha }^{\ast }-\widehat{\alpha }=O_{p^{\ast }}\left(
n^{-1/2}\right) \ \ \ \text{and \ \ }\widehat{\gamma }^{\ast }-\gamma
_{0}=O_{p^{\ast }}\left( n^{-1/3}\right) \text{.}
\end{equation*}
$\left( \mathbf{b}\right) $ Under Assumption J, 
\begin{equation*}
\widehat{\alpha }^{\ast }-\widehat{\alpha }=O_{p^{\ast }}\left(
n^{-1/2}\right) \ \ \ \text{and \ \ }\widehat{\gamma }^{\ast }-\gamma
_{0}=O_{p^{\ast }}\left( n^{2\varphi -1}\right) \text{.}
\end{equation*}
\end{proposition}

\noindent \textbf{Proof of Proposition\ \ref{Prop:ConsistencyBoot}} \ \
Assuming without loss of generality that $\gamma \geq \widehat{\gamma }%
=\gamma _{0}$ and abbreviating $\widehat{\psi }-\psi $ by $\overline{\psi }$
for any parameter $\psi $, proceeding as in Proposition \ref{Consistency},
we obtain that 
\begin{eqnarray*}
{{\mathbb{S}}}_{n}^{\ast }\left( \theta \right) -{{\mathbb{S}}}_{n}^{\ast
}\left( \widehat{\theta }\right) &=&\frac{1}{n}\sum_{t=1}^{n}\left\{ \left( 
\overline{\beta }^{\prime }x_{t}+\overline{\delta }^{\prime }x_{t}\mathbf{1}
_{t}\left( \gamma \right) +\widehat{\delta }^{\prime }x_{t}\mathbf{1}
_{t}\left( \widehat{\gamma };\gamma \right) +\varepsilon _{t}^{\ast }\right)
^{2}-\varepsilon _{t}^{\ast 2}\right\} \\
&=&\widehat{\mathbb{A}}_{n1}\left( \theta \right) +\widehat{\mathbb{A}}
_{n2}\left( \theta \right) +\widehat{\mathbb{A}}_{n3}\left( \theta \right) + 
\mathbb{B}_{n1}^{\ast }\left( \theta \right) +\mathbb{B}_{n2}^{\ast }\left(
\theta \right) +\mathbb{B}_{n3}^{\ast }\left( \theta \right) \text{,}
\end{eqnarray*}%
where 
\begin{eqnarray*}
\widehat{\mathbb{A}}_{n1}\left( \theta \right) &=&\overline{\upsilon }
^{\prime }M_{n}^{x}\left( \gamma \right) \overline{\upsilon };~\text{ \ } 
\widehat{\mathbb{A}}_{n2}\left( \theta \right) =\overline{\beta }^{\prime
}M_{n}^{x}\left( -\infty ;\widehat{\gamma }\right) \overline{\beta } \\
\widehat{\mathbb{A}}_{n3}\left( \theta \right) &=&\left( \overline{\beta }+ 
\widehat{\delta }\right) ^{\prime }M_{n}^{x}\left( \widehat{\gamma };\gamma
\right) \left( \overline{\beta }+\widehat{\delta }\right) \\
\mathbb{B}_{n1}^{\ast }\left( \theta \right) &=&\overline{\upsilon }^{\prime
}\frac{2}{n}\sum_{t=1}^{n}x_{t}\varepsilon _{t}^{\ast }\mathbf{1}_{t}\left(
\gamma \right) ;~\ \ \mathbb{B}_{n2}^{\ast }\left( \theta \right) =\overline{
\beta }^{\prime }\frac{2}{n}\sum_{t=1}^{n}x_{t}\varepsilon _{t}^{\ast } 
\mathbf{1}_{t}\left( -\infty ;\widehat{\gamma }\right) \\
\mathbb{B}_{n3}^{\ast }\left( \theta \right) &=&\left( \overline{\beta }+ 
\widehat{\delta }\right) ^{\prime }\frac{2}{n}\sum_{t=1}^{n}x_{t}\varepsilon
_{t}^{\ast }\mathbf{1}_{t}\left( \widehat{\gamma };\gamma \right) \text{,}
\end{eqnarray*}%
where, in what follows, for a generic sequence $\left\{ z_{t}\right\} _{t\in 
\mathbb{Z}}$ we employ the notation $M_{n}^{z}\left( \gamma \right) =\frac{1%
}{n}\sum_{t=1}^{n}z_{t}z_{t}^{\prime }\mathbf{1}_{t}\left( \gamma \right) $
and $M_{n}^{z}\left( \gamma _{1};\gamma _{2}\right) =\frac{1}{n}%
\sum_{t=1}^{n}z_{t}z_{t}^{\prime }\mathbf{1}_{t}\left( \gamma _{1};\gamma
_{2}\right) $. It is also worth recalling that for $n$ large enough $%
0<\sup_{\gamma \in \Gamma }\left\Vert M_{n}^{x}\left( \gamma \right)
\right\Vert =H_{n}$ and $0<\sup_{\gamma _{1}<\gamma _{2}}\left\Vert
M_{n}^{x}\left( \gamma _{1};\gamma _{2}\right) \right\Vert =H_{n}$, where in
what follows $H_{n}$ denotes a sequence of strictly positive $O_{p}\left(
1\right) $ random variables. Finally as we have in the proof of Proposition %
\ref{Consistency}, because $E\left( x_{t}x_{t}^{\prime }\mathbf{1}_{t}\left(
\gamma \right) \right) $ and $E\left( x_{t}x_{t}^{\prime }\mathbf{1}%
_{t}\left( 0;\gamma \right) \right) $ are strictly finite positive definite
matrices, $M_{n}^{x}\left( -\infty ;\gamma \right) -E\left(
x_{t}x_{t}^{\prime }\mathbf{1}_{t}\left( -\infty ;\gamma \right) \right)
=O_{p}\left( n^{-1/2}\right) $ and $M_{n}^{x}\left( \gamma \right) -E\left(
x_{t}x_{t}^{\prime }\mathbf{1}_{t}\left( \gamma \right) \right) =O_{p}\left(
n^{-1/2}\right) $ uniformly in $\gamma \in \Gamma $, we have that 
\begin{eqnarray}
C_{1}H_{n} \leq \frac{\widehat{\mathbb{A}}_{n2}\left( \theta \right) }{
\left( \overline{\beta }_{1},\overline{\beta }_{2}^{\prime }\right)
M_{n}^{x_{1}}\left( -\infty ;0\right) \left( \overline{\beta }_{1},\overline{
\beta }_{2}^{\prime }\right) ^{\prime }+\overline{\beta }_{3}^{2}M_{n}^{q} 
\mathbf{1}_{t}\left( -\infty ;0\right) }&\leq& C_{2}H_{n}  \notag \\
C_{1}H_{n} \leq \frac{\widehat{\mathbb{A}}_{n3}\left( \theta \right) }{
\left( \overline{\tau }_{1},\overline{\tau }_{2}^{\prime }\right)
M_{n}^{x_{1}}\left( 0;\gamma \right) \left( \overline{\tau }_{1},\overline{
\tau }_{2}^{\prime }\right) ^{\prime }+\overline{\tau }_{3}^{2}M_{n}^{q}
\left( 0;\gamma \right) }&\leq& C_{2}H_{n}\text{,}  \label{definboot}
\end{eqnarray}%
where $\overline{\tau }=\left( \widehat{\beta }-\beta \right) +\widehat{
\delta }$. The motivation is that we employ in the proof of Proposition \ref%
{Consistency}, after observing that Proposition \ref{Consistency} implies
that $\widehat{\gamma }-\gamma _{0}=O_{p}\left( n^{-1/3}\right) $ and Lemma %
\ref{lem:maxineq2} that uniformly in $\gamma _{1}<\gamma _{2}\in \Gamma $, 
\begin{equation*}
M_{n}^{x}\left( \gamma _{1};\gamma _{2}\right) -Ex_{t}x_{t}^{\prime }\mathbf{%
\ \ \ 1}_{t}\left( \gamma _{1};\gamma _{2}\right) =O_{p}\left(
n^{-1/2}\right)
\end{equation*}%
together with the fact that $M_{n}^{x}\left( -\infty ;\widehat{\gamma }%
\right) =M_{n}^{x}\left( -\infty ;\gamma _{0}\right) +M_{n}^{x}\left( \gamma
_{0};\widehat{\gamma }\right) $.

\textbf{Consistency}. We begin with part $\left( \mathbf{a}\right) $.
Arguing as in the proof of Proposition \ref{Consistency}, it suffices to
show that 
\begin{equation}
\Pr \left. ^{\ast }\right. \left\{ \inf_{\left\Vert \overline{\theta }
\right\Vert >\eta }\sum_{\ell =1}^{3}\widehat{\mathbb{A}}_{n\ell }\left(
\theta \right) +\mathbb{B}_{n\ell }^{\ast }\left( \theta \right) \leq
0\right\} \leq \epsilon H_{n}\text{.}  \label{consiboot_2}
\end{equation}

First, when $\left\Vert \overline{\theta }\right\Vert >\eta $, it implies
that either $\left( \mathbf{i}\right) $ $\left\Vert \overline{\gamma }%
\right\Vert >\eta /2$ or $\left( \mathbf{ii}\right) $ $\left\Vert \overline{
\beta }\right\Vert ,\left\Vert \overline{\upsilon }\right\Vert >\eta /2$.
When $\left( \mathbf{ii}\right) $ holds true, it is clear that 
\begin{equation}
\inf_{\left\Vert \overline{\upsilon }\right\Vert >\eta /2}\widehat{\mathbb{A}%
}_{n\ell }\left( \theta \right) >\eta ^{2}H_{n}~\ \ \ \ \ \ell =1,2
\label{ineqboot_2}
\end{equation}%
whereas when $\left( \mathbf{i}\right) $ holds true, we obtain that%
\begin{equation}
\inf_{\left\Vert \gamma \right\Vert >\eta /2}M_{n}^{x}\left( \widehat{\gamma 
};\gamma \right) >\eta H_{n}\text{,}  \label{ineqboot_3}
\end{equation}%
because $E\left( x_{t}x_{t}^{\prime }\mathbf{1}_{t}\left( \gamma \right)
\right) $ and $E\left( x_{t}x_{t}^{\prime }\mathbf{1}_{t}\left( 0;\gamma
\right) \right) $ are strictly positive definite matrices, since say $%
E\left( x_{t}x_{t}^{\prime }\mathbf{1}_{t}\left( 0;\gamma \right) \right)
-E\left( x_{t}x_{t}^{\prime }\mathbf{1}_{t}\left( 0;\eta /4\right) \right) $
is a positive definite matrix when $\left\Vert \overline{\gamma }\right\Vert
>\eta /2$, $M_{n}^{x}\left( \widehat{\gamma };\gamma \right) =E\left(
x_{t}x_{t}^{\prime }\mathbf{1}_{t}\left( 0;\gamma \right) \right) \left(
1+o_{p}\left( 1\right) \right) \ $and $\widehat{\mathbb{A}}_{n\ell }\left(
\theta \right) -E\left( \mathbb{A}_{n\ell }\left( \theta \right) \right)
=o_{p}\left( 1\right) $. Recall that $E\left( a^{\prime }x_{t}\mathbf{1}%
_{t}\left( 0;\eta \right) \right) >\eta \min_{q\in \left( 0,\eta \right)
}f\left( q\right) E\left( a^{\prime }x_{t}\right) $. So, $\left( \ref%
{ineqboot_2}\right) $ and $\left( \ref{ineqboot_3}\right) $ implies that 
\begin{equation}
\inf_{\left\Vert \overline{\theta }\right\Vert >\eta }\sum_{\ell =1}^{3}%
\widehat{\mathbb{A}}_{n\ell }\left( \theta \right) >\eta ^{2}H_{n}\text{. }
\label{ineqboot_7}
\end{equation}%
On the other hand, Lemma \ref{lem:max_boot} implies that 
\begin{equation}
E^{\ast }\left( \sup_{\gamma }\left\Vert \frac{1}{n^{1/2}}%
\sum_{t=1}^{n}x_{t}\varepsilon _{t}^{\ast }\mathbf{1}_{t}\left( \gamma
\right) \right\Vert \right) ^{2}+E^{\ast }\left( \sup_{\gamma }\left\Vert 
\frac{1}{n^{1/2}}\sum_{t=1}^{n}x_{t}\varepsilon _{t}^{\ast }\mathbf{1}%
_{t}\left( -\infty ;\gamma \right) \right\Vert \right) ^{2}=H_{n}\text{,}
\label{ineqboot_1}
\end{equation}%
so that 
\begin{equation}
E^{\ast }\sup_{\left\Vert \overline{\theta }\right\Vert >\eta /2}\left\Vert 
\mathbb{B}_{n\ell }^{\ast }\left( \theta \right) \right\Vert
=n^{-1/2}H_{n}~\ \ \ \ \ \ell =1,2,3\text{.}  \label{ineqboot_4}
\end{equation}

Thus $\left( \ref{ineqboot_7}\right) $ and $\left( \ref{ineqboot_4}\right) $
yields that $\widehat{\theta }^{\ast }-\widehat{\theta }=o_{p^{\ast }}\left(
1\right) $ because the left side of $\left( \ref{consiboot_2}\right) $ is
bounded by 
\begin{equation*}
\Pr \left. ^{\ast }\right. \left\{ \inf_{\left\Vert \overline{\theta }
\right\Vert >\eta }\sum_{\ell =1}^{3}\widehat{\mathbb{A}}_{n\ell }\left(
\theta \right) \leq \sup_{\left\Vert \overline{\theta }\right\Vert >\eta
}\left\Vert \sum_{\ell =1}^{3}\mathbb{B}_{n\ell }^{\ast }\left( \theta
\right) \right\Vert \right\}
\end{equation*}%
and then Markov's inequality. This concludes the consistency proof.

\textbf{Convergence rate}. To that end, we shall show that for some $C>0$
large enough and $\epsilon >0$, 
\begin{equation}
\Pr \left. ^{\ast }\right. \left\{ \inf_{\frac{C}{n^{1/2}}<\left\Vert 
\overline{\upsilon }\right\Vert ;\left\Vert \overline{\beta }\right\Vert
<\eta ;\frac{C}{n^{1/3}}<\left\Vert \gamma \right\Vert <\eta }\sum_{\ell
=1}^{3}\widehat{\mathbb{A}}_{n\ell }\left( \theta \right) +\mathbb{B}_{n\ell
}^{\ast }\left( \theta \right) \leq 0\right\} <\epsilon H_{n}\text{.}
\label{rateboot_1}
\end{equation}%
To that end, we shall first examine 
\begin{equation*}
\Pr \left. ^{\ast }\right. \left\{ \inf_{\Xi _{j}\left( \upsilon \right)
;\Xi _{j}\left( \beta \right) ;\Xi _{k}\left( \gamma \right) }\sum_{\ell
=1}^{3}\widehat{\mathbb{A}}_{n\ell }\left( \theta \right) +\mathbb{B}_{n\ell
}^{\ast }\left( \theta \right) \leq 0\right\}
\end{equation*}%
where for some $j=1,...,\log _{2}\frac{\eta }{C}n^{1/2}$ and $k=1,...,\log
_{2}\frac{\eta }{C}n^{1/3}$, and $\Xi _{j}\left( \upsilon \right) $ and $\Xi
_{k}\left( \gamma \right) $ are defined similarly to $\left( \ref{set_jk}%
\right) $. Recall that we have assumed that $\gamma \geq 0$ since when $%
\gamma \leq 0$ the proof follows similarly.

Now Lemma \ref{lem:max_boot} implies that 
\begin{eqnarray}
&&\Pr \left. ^{\ast }\right. \left\{ \inf_{\Xi _{j}\left( \upsilon \right)
;\Xi _{j}\left( \beta \right) ;\Xi _{k}\left( \gamma \right) }\widehat{ 
\mathbb{A}}_{n1}\left( \theta \right) +\mathbb{B}_{n1}^{\ast }\left( \theta
\right) \leq 0\right\}  \notag \\
&\leq &\Pr \left. ^{\ast }\right. \left\{ \inf_{\Xi _{j}\left( \upsilon
\right) ;\Xi _{j}\left( \beta \right) }\left\Vert \overline{\upsilon }%
\right\Vert \left\Vert M_{n}^{x}\left( \gamma \right) \right\Vert \leq
\sup_{\Xi _{k}\left( \gamma \right) }\left\Vert \frac{2}{n^{1/2}}%
\sum_{t=1}^{n}x_{t}\varepsilon _{t}^{\ast }\mathbf{1}_{t}\left( \gamma
\right) \right\Vert \right\}  \notag \\
&\leq &\Pr \left. ^{\ast }\right. \left\{ \left\Vert M_{n}^{x}\left( \gamma
\right) \right\Vert C2^{j-1}\leq \sup_{\left\{ \gamma :\left\Vert \gamma
\right\Vert <\eta \right\} }\left\Vert \frac{1}{n^{1/2}}\sum_{t=1}^{n}x_{t}%
\varepsilon _{t}^{\ast }\mathbf{1}_{t}\left( \gamma \right) \right\Vert
\right\}  \label{rateboot_2} \\
&\leq &C^{-1}2^{-2j}H_{n}\text{.}  \notag
\end{eqnarray}%
Observe that the bound in $\left( \ref{rateboot_2}\right) $ is independent
of $k$, i.e. the set $\Xi _{k}\left( \gamma \right) $. Defining%
\begin{eqnarray*}
\widetilde{\mathbb{A}}_{n2}\left( \theta \right) &=&\left( \overline{\beta }
_{1},\overline{\beta }_{2}^{\prime }\right) M_{n}^{x}\left( -\infty
;0\right) \left( \overline{\beta }_{1},\overline{\beta }_{2}^{\prime
}\right) ^{\prime } \\
\widetilde{\mathbb{B}}_{n2}^{\ast }\left( \theta \right) &=&\left( \overline{
\beta }_{1},\overline{\beta }_{2}^{\prime }\right) \frac{2}{n}
\sum_{t=1}^{n}x_{t1}\varepsilon _{t}^{\ast }\mathbf{1}_{t}\left( -\infty
;\gamma \right) \text{,}
\end{eqnarray*}%
$\left( \ref{definboot}\right) $ yields that 
\begin{eqnarray}
&&\Pr \left. ^{\ast }\right. \left\{ \inf_{\Xi _{j}\left( \upsilon \right)
;\Xi _{k}\left( \gamma \right) }\widetilde{\mathbb{A}}_{n2}^{\ast }\left(
\theta \right) +\widetilde{\mathbb{B}}_{n2}^{\ast }\left( \theta \right)
\leq 0\right\}  \notag \\
&\leq &\Pr \left. ^{\ast }\right. \left\{ \inf_{\Xi _{j}\left( \upsilon
\right) }\left\Vert \left( \overline{\beta }_{1},\overline{\beta }%
_{2}^{\prime }\right) \right\Vert M_{n}^{x_{1}}\left( -\infty ;0\right) \leq
\sup_{\Xi _{k}\left( \gamma \right) }\left\Vert \frac{2}{n}%
\sum_{t=1}^{n}x_{t1}\varepsilon _{t}^{\ast }\mathbf{1}_{t}\left( -\infty
;\gamma \right) \right\Vert \right\}  \notag \\
&\leq &\Pr \left. ^{\ast }\right. \left\{ \left\Vert M_{n}^{x_{1}}\left(
-\infty ;0\right) \right\Vert C2^{j-1}\leq \sup_{\left\{ \gamma :\left\Vert
\gamma \right\Vert <\eta \right\} }\left\Vert \frac{1}{n^{1/2}}%
\sum_{t=1}^{n}x_{t1}\varepsilon _{t}^{\ast }\mathbf{1}_{t}\left( -\infty
;\gamma \right) \right\Vert \right\}  \label{rateboot_3} \\
&\leq &C^{-1}2^{-2j}H_{n}\text{,}  \notag
\end{eqnarray}%
by Lemma \ref{lem:max_boot}, which once again the bound is independent of $k$%
.

Next, define%
\begin{equation*}
\widetilde{\mathbb{A}}_{n3}\left( \theta \right) =\widehat{\tau }%
^{2}q_{t}^{2}\mathbf{1}_{t}\left( 0;\gamma \right) ;\text{ \ \ }\widetilde{ 
\mathbb{B}}_{n3}^{\ast }\left( \theta \right) =\widehat{\tau }\frac{2}{n}%
\sum_{t=1}^{n}q_{t}\varepsilon _{t}^{\ast }\mathbf{1}_{t}\left( 0;\gamma
\right) \text{,}
\end{equation*}%
then, because $\widehat{\tau }=H_{n}+C_{1}$, 
\begin{eqnarray}
&&\Pr \left. ^{\ast }\right. \left\{ \inf_{\Xi _{j}\left( \upsilon \right)
;\Xi _{k}\left( \gamma \right) }\widetilde{\mathbb{A}}_{n3}\left( \theta
\right) +\widehat{\tau }\widetilde{\mathbb{B}}_{n3}^{\ast }\left( \theta
\right) \leq 0\right\}  \notag \\
&\leq &\Pr \left. ^{\ast }\right. \left\{ \inf_{\Xi _{j}\left( \upsilon
\right) }\left\Vert \widehat{\tau }\right\Vert \frac{1}{n}%
\sum_{t=1}^{n}q_{t}^{2}\mathbf{1}_{t}\left( 0;\gamma \right) \leq \sup_{\Xi
_{k}\left( \gamma \right) }\left\Vert \widetilde{\mathbb{B}}_{n3}^{\ast
}\left( \theta \right) /\tau _{0}\right\Vert \right\}  \notag \\
&\leq &\Pr \left. ^{\ast }\right. \left\{ \frac{C}{n}2^{3\left( k-1\right)
}\leq \sup_{\Xi _{k}\left( \gamma \right) }\left\Vert \widetilde{\mathbb{B}}%
_{n3}^{\ast }\left( \theta \right) /\widehat{\tau }\right\Vert \right\}
\label{rateboot_4} \\
&\leq &C^{-1}2^{-3k/2}H_{n}\text{,}  \notag
\end{eqnarray}%
by Lemma \ref{lem:max_boot} and Markov's inequality. Observe that the latter
displayed bound is independent of $j$, i.e. the set $\Xi _{j}\left( \upsilon
\right) $.

So, the left side of $\left( \ref{rateboot_1}\right) $ is bounded by 
\begin{eqnarray*}
&&\Pr \left. ^{\ast }\right. \left\{ \max_{j,k}\inf_{\Xi _{j}\left( \upsilon
\right) ;\Xi _{k}\left( \gamma \right) }\sum_{\ell =1}^{3}\widehat{\mathbb{A}
}_{n\ell }\left( \theta \right) +\mathbb{B}_{n\ell }^{\ast }\left( \theta
\right) \leq 0\right\} \\
&\leq &C^{-1}\left( \sum_{j=1}^{\log _{2}\frac{\eta }{C}n^{1/2}}2^{-2j}+
\sum_{k=1}^{\log _{2}\frac{\eta }{C}n^{1/3}}2^{-3k/2}\right) <\epsilon H_{n} 
\text{.}
\end{eqnarray*}%
using $\left( \ref{rateboot_2}\right) -\left( \ref{rateboot_4}\right) $.
This concludes the proof of part $\left( \mathbf{a}\right) $.

The proof of part $\left( \mathbf{b}\right) $ is similarly handled after
obvious changes, so it is omitted.\hfill $\blacksquare $

\section{\textbf{AUXILIARY LEMMAS}}

We begin with a set of maximal inequalities, which play a central role in
deriving convergence rates and tightness of various empirical processes. For 
$j=1$ or $2,$ let 
\begin{eqnarray*}
J_{n}\left( \gamma ,\gamma ^{\prime }\right) &=&\frac{1}{n^{1/2}}%
\sum_{t=1}^{n}\varepsilon _{t}x_{t}\mathbf{1}_{t}\left( \gamma ;\gamma
^{\prime }\right) \\
J_{1n}\left( \gamma ,\gamma ^{\prime }\right) &=&\frac{1}{n^{1/2}}%
\sum_{t=1}^{n}\varepsilon _{t}\left\vert q_{t}-\gamma \right\vert ^{j}%
\mathbf{1}_{t}\left( \gamma ;\gamma ^{\prime }\right) \\
J_{2n}\left( \gamma \right) &=&\frac{1}{n^{1/2}}\sum_{t=1}^{n}\left\{
\left\vert q_{t}-\gamma _{0}\right\vert ^{j}\mathbf{1}_{t}\left( \gamma
_{0};\gamma \right) -E\left\vert q_{t}-\gamma _{0}\right\vert ^{j}\mathbf{1}%
_{t}\left( \gamma _{0};\gamma \right) \right\}
\end{eqnarray*}%
and for some sequence $\left\{ z_{t}\right\} _{t=1}^{n}$, 
\begin{equation*}
J_{3n}\left( \gamma \right) =\frac{1}{n^{1/2}}\sum_{t=1}^{n}\left( z_{t}%
\mathbf{1}_{t}\left( \gamma _{0};\gamma \right) -Ez_{t}\mathbf{1}_{t}\left(
\gamma _{0};\gamma \right) \right) \text{.}
\end{equation*}

\begin{lemma}
\label{lem:maxineq2}Suppose Assumptions Z and Q hold for the sequence $%
\left\{ x_{t},\varepsilon _{t}\right\} _{t=1}^{n}$. In addition, for $%
J_{3n}\left( \gamma \right) ,$ assume that $\left\{ z_{t},q_{t}\right\}
_{t=1}^{n}$ be a sequence of strictly stationary, ergodic, and $\rho $
-mixing with $\sum_{m=1}^{\infty }\rho _{m}^{1/2}<\infty $, $E\left\vert
z_{t}\right\vert ^{4}<\infty $ and, for all $\gamma \in \Gamma $,$\ ${{\ 
\textrm{$E$}}}$\left( \left\vert z_{t}\right\vert ^{4}|q_{t}=\gamma \right)
<C<\infty $. Then, there exists $n_{0}<\infty $ such that for all $\gamma
^{\prime }\ $in a neighbourhood of $\gamma _{0}\ $and for all $n>n_{0}$ and $%
\epsilon \geq n_{0}^{-1}$, 
\begin{eqnarray}
\left( \mathbf{a}\right) \text{~\ ~}E\sup_{\gamma ^{\prime }<\gamma <\gamma
^{\prime }+\epsilon }\left\vert J_{n}\left( \gamma ^{\prime },\gamma \right)
\right\vert &\leq &C\epsilon ^{1/2}  \notag \\
\left( \mathbf{b}\right) \text{~~}E\sup_{\gamma ^{\prime }<\gamma <\gamma
^{\prime }+\epsilon }\left\vert J_{1n}\left( \gamma ^{\prime },\gamma
\right) \right\vert &\leq &C\epsilon ^{1/2}\left( \epsilon +\left\vert
\gamma _{0}-\gamma ^{\prime }\right\vert \right) ^{j}  \label{eq:maxineq2a}
\\
\left( \mathbf{c}\right) \ \ \text{~~}E\sup_{\gamma _{0}<\gamma <\gamma
_{0}+\epsilon }\left\vert J_{2n}\left( \gamma \right) \right\vert &\leq
&C\epsilon ^{j+1/2}  \label{eq:maxineq2b} \\
\left( \mathbf{d}\right) \ \ \text{~~}E\sup_{\gamma _{0}<\gamma <\gamma
_{0}+\epsilon }\left\vert J_{3n}\left( \gamma \right) \right\vert &\leq
&C\epsilon ^{1/2}\text{,}  \label{eq:maxineq2c}
\end{eqnarray}%
where $j=1$ or $2$.
\end{lemma}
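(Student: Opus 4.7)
\textbf{Proof proposal for Lemma \ref{lem:maxineq2}.}

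The plan is to treat all four inequalities via the same two-step template: first establish a second-moment bound for the process at a fixed endpoint $\gamma$ (which is of order $\epsilon$ in the lengths appearing on the right-hand side), and then upgrade from a pointwise bound to a bound on the supremum via a Doob/chaining-type maximal inequality exploiting the monotone structure of the indicator $\mathbf{1}_t(\gamma';\gamma)$ in $\gamma$. The monotonicity is crucial: as $\gamma$ increases from $\gamma'$ to $\gamma'+\epsilon$, $J_n(\gamma',\gamma)$ (or its analogue) is a pure-jump process in $\gamma$ whose jumps occur at the ordered observations of $q_t$ falling in $(\gamma',\gamma'+\epsilon)$, so its supremum is attained at one of a finite number of points and can be dominated by the increments of a martingale (in $t$) after re-ordering.

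For parts $(\mathbf{a})$ and $(\mathbf{b})$, I will exploit the martingale-difference structure $E(\varepsilon_t\mid{\mathcal F}_{t-1})=0$ from Assumption \textbf{Z}. Writing $\zeta_t(\gamma',\gamma)=\varepsilon_t x_t \mathbf{1}_t(\gamma';\gamma)$ (respectively weighted by $|q_t-\gamma|^j$ for $(\mathbf{b})$), the $\zeta_t$'s are martingale differences with respect to an enlarged filtration in which $(x_t,q_t)$ is predictable. Then by the Burkholder--Davis--Gundy (or simply orthogonality of martingale differences) inequality,
\begin{equation*}
E\,J_n(\gamma',\gamma)^2 \leq \frac{C}{n}\sum_{t=1}^n E\bigl(\varepsilon_t^2 \|x_t\|^2 \mathbf{1}_t(\gamma';\gamma)\bigr)
= C\int_{\gamma'}^{\gamma}\operatorname{tr} V(u)f(u)\,du,
\end{equation*}
and Assumption \textbf{Q} (boundedness of $f$ and of $E(\|x_t\varepsilon_t\|^2\mid q_t=u)$ on the neighbourhood of $\gamma_0$) bounds the right-hand side by $C(\gamma-\gamma')$. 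For part $(\mathbf{b})$ the extra weight $|q_t-\gamma|^j$ contributes a factor $(\epsilon+|\gamma_0-\gamma'|)^{2j}$ on the square scale, giving the stated $\epsilon^{1/2}(\epsilon+|\gamma_0-\gamma'|)^j$ after Cauchy--Schwarz. To pass from a pointwise $L^2$ bound to the supremum bound, I would apply a standard chaining argument over a dyadic partition of $(\gamma',\gamma'+\epsilon)$, or equivalently Doob's maximal inequality to the martingale obtained by ordering observations in increasing $q_t$.

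For parts $(\mathbf{c})$ and $(\mathbf{d})$, the martingale structure is unavailable since the summands are centred indicators (times polynomial weights) of $(x_t,q_t)$ alone; instead I rely on the $\rho$-mixing condition with $\sum_m\rho_m^{1/2}<\infty$. The Peligrad--Utev type covariance inequality yields
\begin{equation*}
E\,J_{2n}(\gamma)^2 \leq C\cdot E\bigl(|q_t-\gamma_0|^{2j}\mathbf{1}_t(\gamma_0;\gamma)\bigr)
\leq C\int_{\gamma_0}^{\gamma}(u-\gamma_0)^{2j} f(u)\,du = O(\epsilon^{2j+1}),
\end{equation*}
and analogously $E\,J_{3n}(\gamma)^2\leq C\cdot E(z_t^2\mathbf{1}_t(\gamma_0;\gamma))\leq C\epsilon$ using the conditional fourth-moment bound on $z_t$ and boundedness of $f$. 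Taking square roots gives the pointwise rates $\epsilon^{j+1/2}$ and $\epsilon^{1/2}$. The passage to the supremum uses the same dyadic chaining: since both processes are monotone-in-$\gamma$ pure-jump processes, a Kolmogorov-type maximal inequality for partial sums of $\rho$-mixing arrays (again relying on $\sum\rho_m^{1/2}<\infty$) absorbs the maximum at the cost of a multiplicative constant.

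The main technical obstacle is the chaining/maximal step under $\rho$-mixing in parts $(\mathbf{c})$ and $(\mathbf{d})$, because we lack the clean Doob inequality available in the martingale case. The required control is the inequality $E\max_{k\leq n}|\sum_{t\leq k}\zeta_t|^2 \leq C n E\zeta_t^2$, which is known for $\rho$-mixing sequences under $\sum\rho_m^{1/2}<\infty$; invoking this, together with the re-ordering trick that makes the process piecewise constant in $\gamma$ with jumps at ordered $q_t$'s, completes the argument. The bookkeeping for $(\mathbf{b})$ around $\gamma'\neq \gamma_0$ (where the weight $|q_t-\gamma|^j$ is not small) just contributes the factor $(\epsilon+|\gamma_0-\gamma'|)^j$ via a crude envelope bound and does not involve new ideas.
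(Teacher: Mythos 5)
Your decomposition into a pointwise moment bound plus a chaining step is the right overall architecture, but the specific tools you invoke for the chaining step are not strong enough, and the gap is not cosmetic.

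The central problem is that your maximal step rests on second moments of increments, whereas the rate $C\epsilon^{1/2}$ (without logarithmic loss) cannot be extracted from second moments via dyadic chaining: at dyadic level $k$ there are $2^k$ increments each with $L^2$ size $(\epsilon 2^{-k})^{1/2}$, so a union/Cauchy--Schwarz bound gives $E\max_j|\Delta_{k,j}|\lesssim 2^{k/2}\cdot\epsilon^{1/2}2^{-k/2}=\epsilon^{1/2}$ at every level, which does not sum over $k$. You need \emph{fourth} moments of increments, of order $|\gamma_1-\gamma_2|^2$, so that each level contributes $\epsilon^{1/2}2^{-k/4}$ and the geometric series closes. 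This is exactly what the paper uses. For parts $(\mathbf{a})$--$(\mathbf{b})$ it leans on Hansen's (2000) Lemmas A.1--A.3, which work with fourth moments $E h_t^4$ and the assumption $E\|x_t\varepsilon_t\|^4<\infty$ in Assumption Z; the novelty in $(\mathbf{b})$ is only that the moment bound inherits the extra factor $\epsilon_1^{jr}=(\epsilon+|\gamma_0-\gamma'|)^{jr}$. For parts $(\mathbf{c})$--$(\mathbf{d})$ the paper discretizes to a grid $\gamma_k=k/n$, bounds the discrete max via the crude inequality $(\sup_j|c_j|)^4\le\sum_j|c_j|^4$ and Billingsley's (1968) Theorem 12.2, and controls the grid increments by Peligrad's (1982) Lemma 3.6, which is a $\rho$-mixing fourth-moment (Rosenthal-type) inequality of the form $E|J_{2n}(\gamma_k)-J_{2n}(\gamma_i)|^4\le C\bigl(n^{-1}E|q_t|^{4j}\mathbf{1}_t(\gamma_i;\gamma_k)+(E|q_t|^{2j}\mathbf{1}_t(\gamma_i;\gamma_k))^2\bigr)$. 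Your Peligrad--Utev covariance bound supplies only the second-moment analogue, which runs into the same chaining obstruction.

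Your alternative route, applying Doob after ``re-ordering'' the observations in increasing $q_t$, does not work either: $\gamma\mapsto J_n(\gamma',\gamma)$ is a monotone pure-jump process in $\gamma$, but the re-ordered summands are no longer martingale differences with respect to any useful filtration (the conditioning information after re-ordering is not nested in $t$), so there is no Doob inequality to invoke. In short, both of the off-the-shelf upgrades you propose for the supremum step fail; the actual argument needs a genuinely fourth-moment chaining, supplied in $(\mathbf{a})$--$(\mathbf{b})$ by Hansen's lemmas for martingale-difference arrays and in $(\mathbf{c})$--$(\mathbf{d})$ by Peligrad's $\rho$-mixing Rosenthal inequality combined with Billingsley's maximal theorem. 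The pointwise moment computations in your write-up are fine and do match the paper's; it is the passage to the supremum that is the substantive content of this lemma and where your proposal is incomplete.
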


\begin{proof}
Part $\left( \mathbf{a}\right) $ proceeds as in Hansen's $\left( 2000\right) 
$ Lemma A.3, so it is omitted.

Next part $\left( \mathbf{b}\right) $. This is almost identical to that of
Hansen's $\left( 2000\right) $ Lemma A.3 once observing that if $|\gamma
_{1}-\gamma ^{\prime }|\leq \epsilon $ and $|\gamma _{2}-\gamma ^{\prime
}|\leq \epsilon $ and $h_{t}(\gamma _{1},\gamma _{2})=|\varepsilon
_{t}(q_{t}-\gamma _{0})^{j}|\mathbf{1}_{t}(\gamma _{1},\gamma _{2})$, then
the bound in his Lemma A.1 (12) should be updated to 
\begin{equation*}
Eh_{i}^{r}\left( \gamma _{1},\gamma _{2}\right) \leq C\int_{\gamma
_{1}}^{\gamma _{2}}\left\vert q-\gamma _{0}\right\vert ^{jr}dq\leq C|\gamma
_{1}-\gamma _{2}|\epsilon _{1}^{jr}\text{,}
\end{equation*}
where $C<\infty $ and $\epsilon _{1}=\left( \epsilon +\left\vert \gamma
_{0}-\gamma ^{\prime }\right\vert \right) $, since $E\left( \left\vert
\varepsilon _{t}^{r}\right\vert |q_{t}\right) $ and the density $f\left(
q\right) $ of $q_{t}$ are bounded around $q_{t}=\gamma _{0}$. Hansen's bound
in (13) should be changed to $|\gamma _{1}-\gamma _{2}|\epsilon _{1}^{jr}$
for the same reason. Then, these new bounds imply that the bounds (15) and
(16) in his Lemma A.3 and the bounds (18) and (20) in the proof of his Lemma
A.2 should change to $\left\vert \gamma _{1}-\gamma _{2}\right\vert
^{2}\epsilon _{1}^{4j}$ and $n^{-1}\left\vert \gamma _{1}-\gamma
_{2}\right\vert \epsilon _{1}^{4j}+\left\vert \gamma _{1}-\gamma
_{2}\right\vert ^{2}\epsilon _{1}^{4j}$, respectively, to yield the desired
bound in $\left( \ref{eq:maxineq2a}\right) $.

Part $\left( \mathbf{c}\right) $. For notational simplicity we assume that $%
\gamma _{0}=0$. Let $\gamma _{k}=k/n$, for $k=1,...,m$, where $m=\left[
\epsilon n\right] +1$. By triangle inequality, 
\begin{equation}
\sup_{\gamma _{0}<\gamma <\gamma _{0}+\epsilon }\left\vert J_{2n}\left(
\gamma \right) \right\vert \leq \max_{k=1,...,m-1}\left\vert J_{2n}\left(
\gamma _{k}\right) \right\vert +\max_{k=1,...,m}\sup_{\gamma _{k-1}\leq
\gamma \leq \gamma _{k}}\left\vert J_{2n}\left( \gamma \right) -J_{2n}\left(
\gamma _{k-1}\right) \right\vert \text{.}  \label{eq:max_2b_1}
\end{equation}
Now because $f\left( \cdot \right) $ is continuous differentiable at $\gamma
_{0}$, standard algebra yields that 
\begin{equation}
E\left\vert q_{t}\right\vert ^{j}~\mathbf{1}_{t}\left( \gamma _{k-1};\gamma
_{k}\right) \leq C\gamma _{k}^{j}/n\text{.}  \label{eq:max_1}
\end{equation}
Next, using $\left( \ref{eq:max_1}\right) $ 
\begin{align*}
& \sup_{\gamma _{k-1}\leq \gamma \leq \gamma _{k}}\left\vert \frac{1}{
n^{1/2} }\sum_{t=1}^{n}\left\vert q_{t}\right\vert ^{j}~\mathbf{1}_{t}\left(
\gamma _{k-1};\gamma \right) \right\vert \\
& \leq \left( J_{2n}\left( \gamma _{k}\right) -J_{2n}\left( \gamma
_{k-1}\right) \right) +n^{1/2}E\left\vert q_{t}\right\vert ^{j}\text{~} 
\mathbf{1}_{t}\left( \gamma _{k-1};\gamma _{k}\right) \\
& =\left( J_{2n}\left( \gamma _{k}\right) -J_{2n}\left( \gamma _{k-1}\right)
\right) +C\gamma _{k}^{j}/n^{1/2}\text{.}
\end{align*}

Thus, using the inequality $\left( \sup_{j=1,...,\ell }\left\vert
c_{j}\right\vert \right) ^{4}\leq \sum_{j=1}^{\ell }\left\vert
c_{j}\right\vert ^{4}$, we conclude that second term on the right of $\left( %
\ref{eq:max_2b_1}\right) $ has absolute moment bounded by 
\begin{equation}
\left( \sum_{k=1}^{m}E\left\vert J_{2n}\left( \gamma _{k}\right)
-J_{2n}\left( \gamma _{k-1}\right) \right\vert ^{4}\right) ^{1/4}+C\gamma
_{m}^{j}/n^{1/2}\text{.}  \label{eq:max_2}
\end{equation}%
However, from Lemma 3.6 of Peligrad $\left( 1982\right) $, for any $k>i$, 
\begin{equation}
E\left\vert J_{2n}\left( \gamma _{k}\right) -J_{2n}\left( \gamma _{i}\right)
\right\vert ^{4}\leq C\left( n^{-1}E\left\vert q_{t}\right\vert ^{4j}~%
\mathbf{1}_{t}\left( \gamma _{i};\gamma _{k}\right) +\left( E\left\vert
q_{t}\right\vert ^{2j}~\mathbf{1}_{t}\left( \gamma _{i};\gamma _{k}\right)
\right) ^{2}\right) \text{.}  \notag
\end{equation}%
So, using again $\left( \ref{eq:max_1}\right) $ and that $m=[\varepsilon
n]+1 $ and $n^{-1}<\varepsilon $, we conclude that the first moment of the
second term on the right of $\left( \ref{eq:max_2b_1}\right) $ is $C\epsilon
^{j+1/2}$.

Next the first moment of the first term on the right of $\left( \ref%
{eq:max_2b_1}\right) $ is also bounded by $C\epsilon ^{j+1/2}$ by
Billingsley's $\left( 1968\right) $ Theorem 12.2 using the last displayed
inequality.

Finally part $\left( \mathbf{d}\right) $\textbf{. }This is similar to that
of $\left( \ref{eq:maxineq2b}\right) $. It is sufficient to note that, with $%
J_{3n}\left( \gamma \right) $, the bounds in $\left( \ref{eq:max_1}\right) $
and $\left( \ref{eq:max_2}\right) $ change to $C/n^{1/2}$ and $C\epsilon
^{2} $, respectively. This yields the results as $n^{-1}<\epsilon $.
\end{proof}

\begin{remark}
\label{rem:tight}One of the consequences of the previous lemma $\left( 
\mathbf{a}\right) $ and $\left( \mathbf{b}\right) $, which allows the
maximal inequality to hold for any $\gamma ^{\prime }$ in a neighbourhood of 
$\gamma _{0}$, is that 
\begin{equation*}
nE\sup_{g_{1}<g<g_{1}+\epsilon }\left\vert J_{n}\left( \gamma
_{0}+g/r_{n}\right) -J_{n}\left( \gamma _{0}+g_{1}/r_{n}\right) \right\vert
\leq C\left( \epsilon +g_{1}\right) \epsilon ^{1/2}\text{,}
\end{equation*}
which can be made small by choosing small $\epsilon $ and $r_{n}\rightarrow
\infty $. This is used to verify the stochastic equicontinuity of the
rescaled and reparameterized empirical processes in the proof of Theorem \ref%
{Th:AD_cube}.
\end{remark}

The following two lemmas are used in the proof of Proposition \ref%
{Prop:xihat}. Before we state our next lemma, we need to introduce some
notation. In what follows 
\begin{eqnarray}
g_{r}\left( q\right) &=&E\left( x_{t2}^{r}\varepsilon _{t}^{2}\mid
q_{t}=q\right) ;\text{ \ \ }g_{r}^{\ast }\left( q\right) =E\left(
x_{t2}^{r}\mid q_{t}=q\right)  \notag \\
h_{r,k}\left( q\right) &=&\sum_{j=0}^{4-k}a^{j}\kappa _{j+k}\frac{\partial
^{j}}{\partial q^{j}}\left( f\left( q\right) g_{r}\left( q\right) \right) 
\text{, }k\leq 4  \label{not} \\
h_{r,k}^{\ast }\left( q\right) &=&\sum_{j=0}^{4-k}a^{j}\kappa _{j+k}\frac{
\partial ^{j}}{\partial q^{j}}\left( f\left( q\right) g_{r}^{\ast }\left(
q\right) \right) \text{, }k\leq 4\text{.}  \notag
\end{eqnarray}
Note that we have implicitly assumed that $g_{r}\left( q\right) $ and $%
f\left( q\right) $ have four continuous derivatives. Also, without loss of
generality, we assume $\gamma _{0}=0$ and $x_{t2}$ is a scalar to ease
notation.

\begin{lemma}
\label{lem:4proposition}Under $\mathbf{K1,K2}$ and $\mathbf{K4}$\textbf{, }
we have that for integers $0\leq \ell ,r\leq 4$, 
\begin{eqnarray}
&&\frac{1}{na^{1+\ell }}\sum_{t=1}^{n}\varepsilon
_{t}^{2}x_{t2}^{r}q_{t}^{\ell }K\left( \frac{q_{t}-\widehat{\gamma }}{a}
\right) -h_{r,\ell }\left( 0\right) =o_{p}\left( 1\right)  \notag \\
&&\frac{1}{na^{1+\ell }}\sum_{t=1}^{n}x_{t2}^{r}q_{t}^{\ell }K\left( \frac{
q_{t}-\widehat{\gamma }}{a}\right) -h_{r,\ell }^{\ast }\left( 0\right)
=o_{p}\left( 1\right) \text{.}  \label{kernel_22}
\end{eqnarray}
\end{lemma}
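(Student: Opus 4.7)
The approach proceeds in three stages: (A) replace the plug-in estimate $\widehat{\gamma}$ by the true value $\gamma_{0}$ inside the kernel argument, (B) replace the resulting sample mean by its expectation, and (C) compute that expectation by change of variable and Taylor expansion of the smooth functions $f g_{r}$ and $f g_{r}^{\ast}$. Only the first display requires the factor $\varepsilon_{t}^{2}$; the second is identical with $\varepsilon_{t}^{2}\equiv 1$ and $g_{r}$ replaced by $g_{r}^{\ast}$, so I will describe only the first. Without loss of generality set $\gamma_{0}=0$, and write $S_{n}(\gamma)=\frac{1}{na^{1+\ell}}\sum_{t=1}^{n}\varepsilon_{t}^{2}x_{t2}^{r}q_{t}^{\ell}K\!\left(\frac{q_{t}-\gamma}{a}\right)$, so the target is $S_{n}(\widehat{\gamma})-h_{r,\ell}(0)=o_{p}(1)$.

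Stage (A). Because $K$ is twice continuously differentiable under \textbf{K2}, a mean-value expansion gives $S_{n}(\widehat{\gamma})-S_{n}(0)=-(\widehat{\gamma}/a)T_{n}(\widetilde{\gamma})$, where $T_{n}(\gamma)=\frac{1}{na^{1+\ell}}\sum_{t=1}^{n}\varepsilon_{t}^{2}x_{t2}^{r}q_{t}^{\ell}K'\!\left(\frac{q_{t}-\gamma}{a}\right)$ and $\widetilde{\gamma}$ lies between $\widehat{\gamma}$ and $0$. Applying the same change-of-variable $u=q/a$ to $E[T_{n}(0)]$ and then integrating by parts (using the regularity of $K'$ in \textbf{K2}\ and moment control from \textbf{K1}) yields $E[T_{n}(0)]=-\int K(u)\big\{\ell u^{\ell-1}+a u^{\ell}(\partial/\partial q)\big\}\!\big(f g_{r}\big)(au)\,du=O(1)$. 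A variance bound analogous to Stage (B) below, together with the local-in-$\gamma$ smoothness from \textbf{K2}, gives $\sup_{|\gamma|\le |\widehat{\gamma}|}|T_{n}(\gamma)|=O_{p}(1)$. Hence $S_{n}(\widehat{\gamma})-S_{n}(0)=O_{p}(\widehat{\gamma}/a)=O_{p}(n^{-1/3}/a)$, which is $o_{p}(1)$ because \textbf{K4} forces $a\,n^{1/3}\to\infty$ (equivalently $a^{3}n\to\infty$). Under Assumption \textbf{J} with $\varphi<1/3$ the rate $\widehat{\gamma}-\gamma_{0}=O_{p}(n^{2\varphi-1})$ is even faster, so the same conclusion holds.

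Stages (B) and (C). For the variance, the change of variable $u=q/a$ combined with the conditional-moment bounds in Assumption \textbf{Q} gives $E\!\left[\big(\varepsilon_{t}^{2}x_{t2}^{r}q_{t}^{\ell}K(q_{t}/a)\big)^{2}\right]=O(a^{2\ell+1})$, so $\mathrm{Var}(S_{n}(0))=O(1/(na))$ after summing the $\rho$-mixing covariance series from Assumption \textbf{Z}; this is $o(1)$ since $na\to\infty$ under \textbf{K4}. For the mean, the change of variable yields $E[S_{n}(0)]=\int u^{\ell}K(u)\,g_{r}(au)f(au)\,du$, and a fourth-order Taylor expansion of $g_{r}f$ around $0$, paired with the kernel-moment definitions $\kappa_{j+\ell}=\int u^{j+\ell}K(u)\,du$ from \textbf{K1}, produces $\sum_{j=0}^{4-\ell}a^{j}\kappa_{j+\ell}\frac{\partial^{j}}{\partial q^{j}}(f g_{r})(0)+o(1)=h_{r,\ell}(0)+o(1)$. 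Combining Stages (A)--(C) gives the first display; the second is immediate by setting $\varepsilon_{t}^{2}\equiv 1$, whereby $g_{r}$ is replaced by $g_{r}^{\ast}$ and $h_{r,\ell}$ by $h_{r,\ell}^{\ast}$.

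The main obstacle is Stage (A): differentiation of $K((q_{t}-\gamma)/a)$ in $\gamma$ brings in a factor $1/a$, so the apparent rate of the plug-in error is $O_{p}(n^{-1/3}/a)$, which is exactly on the edge of vanishing. The bandwidth condition $a^{-3}n^{-1}\to 0$ in \textbf{K4} is tight for this step, and the smoothness of $K'$ postulated in \textbf{K2} is exactly what is needed to apply the mean-value expansion uniformly along the segment from $\gamma_{0}$ to $\widehat{\gamma}$.
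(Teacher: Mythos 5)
Your Stages (B) and (C) are in good shape and match the paper's strategy for the ``centred at $\gamma_{0}$'' piece: compute the mean via a change of variable and a fourth-order Taylor expansion of $f g_{r}$ (which is exactly how $h_{r,\ell}(0)$ arises), and kill the variance by $1/(na)\to 0$. Your instinct that Stage (A) --- replacing $\widehat{\gamma}$ by $\gamma_{0}$ --- is the hard part, and that $\mathbf{K4}$ is the tight bandwidth condition, is also correct. But Stage (A) as written has a genuine gap.

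You reduce Stage (A) to a mean-value expansion $S_{n}(\widehat{\gamma})-S_{n}(0)=-(\widehat{\gamma}/a)\,T_{n}(\widetilde{\gamma})$ and then assert that $\sup_{|\gamma|\le|\widehat{\gamma}|}|T_{n}(\gamma)|=O_{p}(1)$ follows from ``a variance bound together with the local-in-$\gamma$ smoothness from K2.'' That sup bound is exactly the non-trivial claim, and neither ingredient you invoke delivers it. The variance bound is pointwise, not uniform over the (random, shrinking) interval $[0,\widehat{\gamma}]$. And if you try to upgrade it to a uniform bound by differentiating once more in $\gamma$, you land on a term of the form $(\gamma/a)\cdot\frac{1}{na^{1+\ell}}\sum_{t}\varepsilon_{t}^{2}|x_{t2}|^{r}|q_{t}|^{\ell}\,|K''(\cdot)|$, whose control requires integrability of $|u|^{\ell}|K''(u)|$ --- a condition that is not among $\mathbf{K1}$--$\mathbf{K4}$. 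The assumptions give moment bounds on $K$ itself ($\kappa_{\ell}<C$ for $\ell\le 4$ in $\mathbf{K1}$) and a \emph{local ratio} condition on $K'$ ($\mathbf{K2}$), but no global moment control on $K'$ or $K''$.

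This is precisely why the paper's proof does not use a single mean-value expansion. It splits the sum over three regions: the tail $|q_{t}|\ge a^{1/2}$, handled without any derivative of $K$ at all, using only the $\kappa_{\ell}$ bounds from $\mathbf{K1}$ to get an $o(a^{2-\ell/4})$ contribution; the middle region $a^{3/2}<|q_{t}|<a^{1/2}$, where the mean-value point $(q_{t}-\psi\gamma)/a$ is comparable to $q_{t}/a$ so that the ratio condition in $\mathbf{K2}$ controls $K'((q_{t}-\psi\gamma)/a)/K'(q_{t}/a)$; and the core $|q_{t}|<a^{3/2}$, where the factor $|q_{t}/a|^{\ell}$ is itself small and a local Lipschitz bound on $K'$ (display (\ref{k_1}) in the paper) suffices. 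The exponent choices $a^{1/2}$ and $a^{3/2}$ are tuned so that each region closes under $\mathbf{K4}$ without ever needing $\int|u|^{\ell}|K''(u)|\,du<\infty$. Your argument skips this trichotomy entirely, and that is where the proof actually lives. To repair it you would either need to add a moment assumption on $K''$ (which would change the lemma's hypotheses) or reproduce the paper's region-by-region analysis.

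A smaller remark: the lemma uses $\widehat{\gamma}=O_{p}(n^{-1/3})$, which comes from Proposition \ref{Consistency} under Assumption \textbf{C}. Your side comment about Assumption \textbf{J} with $\varphi<1/3$ is fine, but note that the lemma as stated and used lives entirely in the \textbf{C} world; the \textbf{J} case of Proposition \ref{Prop:xihat} is dispatched separately as ``standard.''
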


\begin{proof}
First, observe that we are using the normalization $\left( na^{1+\ell
}\right) ^{-1}$ instead of the standard $\left( na\right) ^{-1}$. This is
due to the factor $q_{t}^{\ell }$. We shall consider only the first equality
in $\left( \ref{kernel_22}\right) $, the second one being similarly handled.
Now abbreviating $K_{t}\left( \gamma \right) =K\left( \frac{q_{t}-\gamma }{a}
\right) $, we have that standard kernel arguments imply 
\begin{equation*}
\frac{1}{na^{1+\ell }}\sum_{t=1}^{n}\varepsilon
_{t}^{2}x_{t2}^{r}q_{t}^{\ell }K_{t}\left( 0\right) -h_{r,\ell }\left(
0\right) =O_{p}\left( \left( na\right) ^{-1/2}\right) +o\left( a^{4-\ell
}\right) \text{.}
\end{equation*}

So, to complete the proof of the lemma, it suffices to show that 
\begin{equation}
\frac{1}{na^{1+\ell }}\sum_{t=1}^{n}\varepsilon
_{t}^{2}x_{t2}^{r}q_{t}^{\ell }\left\{ K_{t}\left( \widehat{\gamma }\right)
-K_{t}\left( 0\right) \right\} =o_{p}\left( 1\right) \text{.}
\label{kernel_3}
\end{equation}

Proposition \ref{Consistency} implies that there exists $C$ such that $\Pr
\left\{ \left\vert \widehat{\gamma }\right\vert >Cn^{-1/3}\right\} \leq \eta 
$, for any $\eta >0$. So, we only need to show that $\left( \ref{kernel_3}
\right) $ holds true when $\left\vert \widehat{\gamma }\right\vert \leq
Cn^{-1/3}$. In that case, we have that the left side of $\left( \ref%
{kernel_3}\right) $ is bounded by 
\begin{eqnarray}
&&\sup_{\left\vert \gamma \right\vert \leq Cn^{-1/3}}\left\vert \frac{1}{
na^{1+\ell }}\sum_{t=1}^{n}\varepsilon _{t}^{2}x_{t2}^{r}q_{t}^{\ell
}\left\{ K_{t}\left( \gamma \right) -K_{t}\left( 0\right) \right\}
\right\vert  \notag \\
&\leq &\sup_{\left\vert \gamma \right\vert \leq Cn^{-1/3}}\left\vert \frac{1 
}{na^{1+\ell }}\sum_{t=1}^{n}\varepsilon _{t}^{2}x_{t2}^{r}q_{t}^{\ell
}\left\{ K_{t}\left( \gamma \right) -K_{t}\left( 0\right) \right\} \mathbf{1}
\left( \left\vert q_{t}\right\vert <a^{1/2}\right) \right\vert
\label{kernel_31} \\
&&+\sup_{\left\vert \gamma \right\vert \leq Cn^{-1/3}}\left\vert \frac{1}{
na^{1+\ell }}\sum_{t=1}^{n}\varepsilon _{t}^{2}x_{t2}^{r}q_{t}^{\ell
}\left\{ K_{t}\left( \gamma \right) -K_{t}\left( 0\right) \right\} \mathbf{1}
\left( \left\vert q_{t}\right\vert \geq a^{1/2}\right) \right\vert \text{.} 
\notag
\end{eqnarray}

The expectation of second term on the right of $\left( \ref{kernel_31}
\right) $ is bounded by 
\begin{eqnarray*}
&&\frac{C_{1}}{na}\sum_{t=1}^{n}E\left( \varepsilon _{t}^{2}\left\vert
x_{t2}\right\vert ^{r}\left\vert \frac{q_{t}}{a}\right\vert ^{\ell }K\left( 
\frac{q_{t}}{a}\right) \mathbf{1}\left( \left\vert q_{t}\right\vert \geq
a^{1/2}\right) \right) \\
&\leq &\frac{C_{1}}{a}\int_{q}\left\vert \frac{q}{a}\right\vert ^{\ell
}g_{r}\left( q\right) f\left( q\right) K\left( \frac{q}{a}\right) \mathbf{1}
\left( \left\vert q_{t}\right\vert \geq a^{1/2}\right) dq \\
&=&C_{1}\int_{\left\vert q\right\vert \geq a^{-1/2}}\left\vert q\right\vert
^{\ell }g_{r}\left( aq\right) f\left( aq\right) K\left( q\right) dq \\
&=&o\left( a^{2-\ell /4}\right) \text{,}
\end{eqnarray*}
because by $\mathbf{K1}$, $\kappa _{\ell }<C_{1}$, for $\ell \leq 4$.

For some $0<\psi <1$, the first term on the right of $\left( \ref{kernel_31}
\right) $ is bounded by 
\begin{eqnarray}
&&\frac{C}{n^{1/3}}\sup_{\left\vert \gamma \right\vert \leq
Cn^{-1/3}}\left\vert \frac{1}{na^{2}}\sum_{t=1}^{n}\varepsilon
_{t}^{2}\left\vert x_{t2}\right\vert ^{r}\left\vert \frac{q_{t}}{a}
\right\vert ^{\ell }K^{\prime }\left( \frac{q_{t}-\psi \gamma }{a}\right) 
\mathbf{1}\left( \left\vert q_{t}\right\vert <a^{1/2}\right) \right\vert 
\notag \\
&\leq &\frac{C}{n^{1/3}}\left\vert \frac{1}{na^{2}}\sum_{t=1}^{n}\varepsilon
_{t}^{2}\left\vert x_{t2}\right\vert ^{r}\left\vert \frac{q_{t}}{a}
\right\vert ^{\ell }K^{\prime }\left( \frac{q_{t}}{a}\right) \mathbf{1}
\left( a^{3/2}<\left\vert q_{t}\right\vert <a^{1/2}\right) \right\vert
\label{ineq} \\
&&+\frac{C}{n^{1/3}}\sup_{\left\vert \gamma \right\vert \leq
Cn^{-1/3}}\left\vert \frac{1}{na^{2}}\sum_{t=1}^{n}\varepsilon
_{t}^{2}\left\vert x_{t2}\right\vert ^{r}\left\vert \frac{q_{t}}{a}
\right\vert ^{\ell }K^{\prime }\left( \frac{q_{t}-\phi \gamma }{a}\right) 
\mathbf{1}\left( \left\vert q_{t}\right\vert <a^{3/2}\right) \right\vert 
\notag
\end{eqnarray}
because $\mathbf{K4}$ implies that $\gamma =o\left( a\right) $ when $%
\left\vert \gamma \right\vert \leq Cn^{-1/3}$, and hence if $%
a^{3/2}<\left\vert q_{t}\right\vert <a^{1/2}$ we have $\left\vert K^{\prime
}\left( \frac{q_{t}-\phi \gamma }{a}\right) /K^{\prime }\left( \frac{q_{t}}{%
a }\right) \right\vert \leq C_{1}$ by $\mathbf{K2}$. But, it is well known
that the first moment of the first term on the right of $\left( \ref{ineq}
\right) $ is bounded, whereas that of the second term on the right is also
bounded because $E\left\vert \frac{q_{t}}{a}\right\vert ^{\ell }\mathbf{1}
\left( \left\vert q_{t}\right\vert <a^{3/2}\right) <a^{\left( \ell +3\right)
/2}$ and 
\begin{equation}
\left\vert K^{\prime }\left( \frac{q_{t}-\phi \gamma }{a}\right)
-K_{t}^{\prime }\left( 0\right) \right\vert \mathbf{1}\left( \left\vert
q_{t}\right\vert <a^{3/2}\right) \leq Ca^{1/2}\text{.}  \label{k_1}
\end{equation}

So, the expectation of the first term on the right of $\left( \ref{kernel_31}
\right) $ is $O\left( n^{-1/3}\right) $. This concludes the proof of the
lemma.
\end{proof}

\begin{lemma}
\label{lem:4proposition_1}Under $\mathbf{K1-K4}$\textbf{, }we have that for
integers $0\leq r,\ell \leq 4$, 
\begin{equation}
\frac{1}{na}\sum_{t=1}^{n}x_{t2}^{r}q_{t}^{\ell }K_{t}\left( \widehat{\gamma 
}\right) \varepsilon _{t}=o_{p}\left( a^{\ell }n^{1/2}\right) \text{.}
\label{prop9_1}
\end{equation}
\end{lemma}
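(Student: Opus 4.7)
The plan mirrors the proof of Lemma \ref{lem:4proposition}, exploiting two features absent there: first, $\varepsilon_t$ is a martingale difference with respect to $\mathcal{F}_{t-1}$ under Assumption \textbf{Z} (unlike $\varepsilon_t^2$), giving an exactly mean-zero leading term; second, the target rate $o_p(a^\ell n^{1/2})$ is far looser than the $o_p(1)$ rate in Lemma \ref{lem:4proposition}, leaving ample slack. I will begin by invoking Proposition \ref{Consistency} to restrict to the event $\{|\widehat{\gamma}|\leq Cn^{-1/3}\}$ (outside which the probability is negligible for $C$ large) and work with a supremum over $|\gamma|\leq Cn^{-1/3}$. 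Then I decompose
\begin{equation*}
\frac{1}{na}\sum_{t=1}^n x_{t2}^r q_t^\ell K_t(\gamma)\varepsilon_t = T_0 + T_1(\gamma),
\end{equation*}
where $T_0$ is the centered-kernel term (at $\gamma=0$) and $T_1(\gamma):=\frac{1}{na}\sum_t x_{t2}^r q_t^\ell [K_t(\gamma)-K_t(0)]\varepsilon_t$.

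For $T_0$, I will treat $(x_{t2},q_t)$ as $\mathcal{F}_{t-1}$-measurable (standard in threshold regression, and implicit in Hansen (2000)), so that the summands form an MDS with $ET_0=0$. A change of variables $u=q_t/a$ together with the conditional-moment bounds of Assumption \textbf{Q} yields $\mathrm{Var}(T_0)=O(a^{2\ell-1}/n)$, whence $T_0=O_p(a^{\ell-1/2}/n^{1/2})$; this is $o_p(a^\ell n^{1/2})$ whenever $na^{1/2}\to\infty$, which follows from $\mathbf{K4}$ (since $na^3\to\infty$ forces $na^{1/2}\gg a^{-5/2}\to\infty$).

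For $T_1$, the mean value theorem gives $K_t(\gamma)-K_t(0)=-(\gamma/a)K'((q_t-\psi\gamma)/a)$ for some $\psi\in(0,1)$, so
\begin{equation*}
\sup_{|\gamma|\leq Cn^{-1/3}}|T_1(\gamma)|\leq \frac{C n^{-1/3}}{na^2}\sum_{t=1}^n |x_{t2}|^r|q_t|^\ell\sup_{|\gamma|\leq Cn^{-1/3}}\bigl|K'\bigl((q_t-\psi\gamma)/a\bigr)\bigr|\cdot|\varepsilon_t|.
\end{equation*}
I will borrow the three-zone split from the proof of Lemma \ref{lem:4proposition}: on $|q_t|\geq a^{1/2}$ and on the intermediate zone $a^{3/2}<|q_t|<a^{1/2}$, $\mathbf{K2}$ is applicable because $|\psi\gamma/q_t|=o(1)$, so $|K'((q_t-\psi\gamma)/a)|\leq C|K'(q_t/a)|$, and a further change of variables yields a per-summand expectation of order $a^{\ell+1}$ (the bandwidth contributes the kernel localization); on the near-center zone $|q_t|<a^{3/2}$, the $K'$-Taylor bound (\ref{k_1}) from the proof of Lemma \ref{lem:4proposition} combined with $|q_t|^\ell\leq a^{3\ell/2}$ suffices. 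Summing, $E\sup_\gamma|T_1(\gamma)|=O(n^{-1/3}a^{\ell-1})$, which is $o(a^\ell n^{1/2})$ whenever $an^{5/6}\to\infty$; this is guaranteed by $a\gg n^{-1/3}$ under $\mathbf{K4}$. Markov's inequality then completes the argument.

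The main obstacle is the bookkeeping of $\sup_\gamma|K'((q_t-\psi\gamma)/a)|$ across the three zones of $q_t$: one must verify that the local-ratio property in $\mathbf{K2}$ is usable exactly where the bandwidth-induced localization factor $a^{\ell+1}$ must be recovered, leaving only the thin near-center slab to be handled by a direct Taylor bound. Once this decomposition is in place, the generous target $a^\ell n^{1/2}$ makes every piece straightforwardly negligible under $\mathbf{K4}$.
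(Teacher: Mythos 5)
Your decomposition into the centered-kernel term $T_{0}$ and the difference $T_{1}$ matches the paper's, and your MDS variance computation for $T_{0}$ is consistent with (indeed more explicit than) the paper's terse claim that this piece is $O_{p}(n^{-1/2}a^{\ell-1/2})$. For $T_{1}$, however, you take a genuinely different route: you transplant the three-zone split ($|q_{t}|\geq a^{1/2}$, $a^{3/2}<|q_{t}|<a^{1/2}$, $|q_{t}|<a^{3/2}$) together with the mean-value bound and the $\mathbf{K2}$-ratio argument from the proof of Lemma \ref{lem:4proposition}, and never invoke $\mathbf{K3}$. The paper's own proof of \emph{this} lemma splits only at $|q_{t}|\lessgtr a^{\zeta}$ with an $\ell$-dependent $\zeta$, and handles the whole near-center region $|q_{t}|<a^{\zeta}$ via the Fourier identity from $\mathbf{K3}$, writing (up to a factor $a$) $K_{t}(\widehat{\gamma})-K_{t}(0)$ as $\int\phi(av)\left(e^{iv\widehat{\gamma}}-1\right)e^{ivq_{t}}\,dv$. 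That route has two advantages over yours: the Lipschitz bound $|e^{iv\widehat{\gamma}}-1|\leq|v\widehat{\gamma}|$ replaces the mean value theorem and requires no uniform control on $K'$ near the origin, and the inner sum $n^{-1}\sum_{t}q_{t}^{\ell}\varepsilon_{t}e^{ivq_{t}}\mathbf{1}(\cdot)$ retains the zero-mean structure of $\varepsilon_{t}$, giving an $n^{-1/2}$ cancellation factor that you forgo by taking absolute values. You are right that the generous target $o_{p}(a^{\ell}n^{1/2})$ means this cancellation can be afforded to lose; that observation is what makes your approach feasible at all.

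The genuine gap is in your intermediate zone. You claim that for $a^{3/2}<|q_{t}|<a^{1/2}$ and $|\gamma|\leq Cn^{-1/3}$ one has $|\psi\gamma/q_{t}|=o(1)$, so that $\mathbf{K2}$ justifies $\sup_{\gamma}|K'((q_{t}-\psi\gamma)/a)|\leq C|K'(q_{t}/a)|$. But on that zone $|\psi\gamma/q_{t}|\leq Cn^{-1/3}a^{-3/2}$, and $\mathbf{K4}$ (which gives only $a\gg n^{-1/3}$, hence $a^{-3/2}\ll n^{1/2}$) yields the bound $n^{-1/3}a^{-3/2}\ll n^{1/6}$, which is \emph{not} $o(1)$ and need not even be bounded. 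Boundedness of the ratio on this zone actually requires the strictly stronger bandwidth rate $a\gg n^{-2/9}$, which $\mathbf{K4}$ does not deliver. The paper's proof of the present lemma avoids this step entirely by routing the whole region $|q_{t}|<a^{\zeta}$ through the characteristic-function representation in $\mathbf{K3}$ — this is in fact the place where $\mathbf{K3}$ earns its keep in Assumption $\mathbf{K}$. To close the gap you would need either to strengthen $\mathbf{K4}$ to $a\gg n^{-2/9}$ or to replace your intermediate/near-center treatment with the $\mathbf{K3}$-based argument, after which the far-zone bound you already have covers the remainder.
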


\begin{proof}
To simplify the notation, we assume that $r=0$. The left side of $\left( \ref%
{prop9_1}\right) $ is 
\begin{equation*}
\frac{1}{na}\sum_{t=1}^{n}q_{t}^{\ell }\left\{ K_{t}\left( \widehat{\gamma }
\right) -K_{t}\left( 0\right) \right\} \varepsilon _{t}+\frac{1}{na}
\sum_{t=1}^{n}q_{t}^{\ell }K_{t}\left( 0\right) \varepsilon _{t}\text{.}
\end{equation*}
The second term is easily shown to be $O_{p}\left( n^{-1/2}a^{\ell
-1/2}\right) $. Next the first term of the last displayed expression is 
\begin{eqnarray}
&&\frac{1}{na}\sum_{t=1}^{n}q_{t}^{\ell }\left\{ K_{t}\left( \widehat{\gamma 
}\right) -K_{t}\left( 0\right) \right\} \varepsilon _{t}\mathbf{1}\left(
\left\vert q_{t}\right\vert <a^{\zeta }\right)  \label{eps_1} \\
&&+\frac{1}{na}\sum_{t=1}^{n}q_{t}^{\ell }\left\{ K_{t}\left( \widehat{
\gamma }\right) -K_{t}\left( 0\right) \right\} \varepsilon _{t}\mathbf{1}
\left( \left\vert q_{t}\right\vert \geq a^{\zeta }\right) \text{,}  \notag
\end{eqnarray}
where $\zeta =1-2/\ell $, if $\ell >2$, and $\zeta <1$ if $\ell \leq 2$. The
second term of $\left( \ref{eps_1}\right) $ is 
\begin{equation*}
a^{\ell }\frac{1}{na}\sum_{t=1}^{n}\left( \frac{q_{t}}{a}\right) ^{\ell
}\left\{ K_{t}\left( \widehat{\gamma }\right) -K_{t}\left( 0\right) \right\}
\varepsilon _{t}\mathbf{1}\left( \left\vert q_{t}\right\vert \geq a^{\zeta
}\right) \text{,}
\end{equation*}
whose first absolute moment is bounded by 
\begin{equation*}
a^{\ell -1}\int_{\left\vert q\right\vert \geq a^{\zeta }}\left( \frac{q}{a}
\right) ^{\ell }K\left( \frac{q}{a}\right) f_{q}\left( q\right) dq\leq
C_{1}a^{\ell }\int_{\left\vert q\right\vert \geq a^{\zeta -1}}q^{\ell
}K\left( q\right) f_{q}\left( aq\right) dq=o\left( a^{\ell }\right)
\end{equation*}
because by $\mathbf{K1}$, $\kappa _{4}<\infty $. So to complete the proof we
need to examine the first term of $\left( \ref{eps_1}\right) $, which using
the characteristic function of the kernel function is 
\begin{equation*}
\int \phi \left( av\right) \left( e^{iv\widehat{\gamma }}-1\right) \left\{ 
\frac{1}{n}\sum_{t=1}^{n}q_{t}^{\ell }\varepsilon _{t}e^{ivq_{t}}\mathbf{1}
\left( \left\vert q_{t}\right\vert <a^{\zeta }\right) \right\} dv\text{.}
\end{equation*}
But its clear that the last displayed expression is bounded by 
\begin{eqnarray*}
&&\widehat{\gamma }\int v\left\vert \phi \left( av\right) \right\vert
\left\vert \frac{1}{n}\sum_{t=1}^{n}q_{t}^{\ell }\varepsilon _{t}e^{ivq_{t}} 
\mathbf{1}\left( \left\vert q_{t}\right\vert <a^{\zeta }\right) \right\vert
dv=O_{p}\left( a^{\ell \zeta }n^{-1/2}\widehat{\gamma }\right) \int
v\left\vert \phi \left( av\right) \right\vert dv \\
&=&O_{p}\left( a^{\ell }\left( na^{3}\right) ^{-4/3}n^{1/2}\right)
\end{eqnarray*}
using that $\zeta =1-2/\ell $, if $\ell \geq 2$ and $\zeta <1$ when $0\leq
\ell <2$, $\widehat{\gamma }=O_{p}\left( n^{-1/3}\right) $ and $\mathbf{K4}$
. This concludes the proof of the lemma.
\end{proof}

We now extend the maximal inequalities in Lemma \ref{lem:maxineq2} to its
bootstrap analogues. Define $J_{n}^{\ast }\left( \gamma ,\gamma ^{\prime
}\right) $ and $J_{1n}^{\ast }\left( \gamma ,\gamma ^{\prime }\right) $ by
replacing $\varepsilon _{t}$ in $J_{n}\ $and $J_{1n}$ with $\widehat{e}%
_{t}\eta _{t}$, that is 
\begin{eqnarray*}
J_{n}^{\ast }\left( \gamma ,\gamma ^{\prime }\right) &=&\frac{1}{n^{1/2}}
\sum_{t=1}^{n}x_{t}\mathbf{1}_{t}\left( \gamma ,\gamma ^{\prime }\right) 
\widehat{e}_{t}\eta _{t} \\
J_{1n}^{\ast }\left( \gamma ,\gamma ^{\prime }\right) &=&\frac{1}{n^{1/2}}
\sum_{t=1}^{n}\left\vert q_{t}-\gamma \right\vert ^{j}\mathbf{1}_{t}\left(
\gamma ;\gamma ^{\prime }\right) \widehat{e}_{t}\eta _{t}\text{,}
\end{eqnarray*}%
and recall that $H_{n}$ denotes a sequence of positive $O_{p}\left( 1\right) 
$ random variables.

\begin{lemma}
\label{lem:max_boot}Under Assumption Z, we have that for all $\epsilon
,\varsigma >0$, there exists $\zeta >0$ such that 
\begin{eqnarray}
\Pr \left. ^{\ast }\right. \left\{ \sup_{\gamma ^{\prime }<\gamma <\gamma
^{\prime }+\epsilon }\left\vert J_{n}^{\ast }\left( \gamma ^{\prime },\gamma
\right) \right\vert >\epsilon \right\} &\leq &\zeta \varsigma H_{n}\text{,}
\label{propboot_1} \\
\Pr \left. ^{\ast }\right. \left\{ \sup_{\gamma ^{\prime }<\gamma <\gamma
^{\prime }+\epsilon }\left\vert J_{1n}^{\ast }\left( \gamma ^{\prime
},\gamma \right) \right\vert >C\epsilon ^{1/2}\left( \epsilon +\left\vert
\gamma _{0}-\gamma ^{\prime }\right\vert \right) ^{j}\right\} &\leq &\zeta
\varsigma H_{n}\text{.}  \label{propboot_2}
\end{eqnarray}
\end{lemma}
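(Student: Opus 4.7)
The plan is to mirror the proof of Lemma \ref{lem:maxineq2} parts (a) and (b), but now working conditionally on the original sample with the bootstrap multipliers $\eta_{t}$ playing the role of innovations. Conditional on the data, $\{\widehat{e}_{t}\eta_{t}\}_{t=1}^{n}$ is a sequence of independent, mean-zero random variables with conditional variance $\widehat{e}_{t}^{2}$, so in the bootstrap world we actually get an independent (hence trivially a martingale difference) sequence, which is strictly better than the $\rho$-mixing setup used in Lemma \ref{lem:maxineq2}; no mixing bound is needed, and the role of Peligrad's inequality is played by Rosenthal's or Burkholder's inequality for sums of independent random variables.

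For fixed $\gamma_{1}<\gamma_{2}$, I would first compute the conditional second and fourth moments
\begin{equation*}
E^{\ast}\bigl[J_{n}^{\ast}(\gamma_{1},\gamma_{2})^{2}\bigr]=\frac{1}{n}\sum_{t=1}^{n}\|x_{t}\|^{2}\mathbf{1}_{t}(\gamma_{1};\gamma_{2})\widehat{e}_{t}^{2},
\end{equation*}
and an analogous expression for the fourth moment controlled by Rosenthal's inequality in terms of $n^{-1}\sum_{t}\|x_{t}\|^{4}\widehat{e}_{t}^{4}\mathbf{1}_{t}(\gamma_{1};\gamma_{2})$ and $(n^{-1}\sum_{t}\|x_{t}\|^{2}\widehat{e}_{t}^{2}\mathbf{1}_{t}(\gamma_{1};\gamma_{2}))^{2}$. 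By the uniform law of large numbers over $\gamma_{1},\gamma_{2}\in\Gamma$ (Assumption Q) and the consistency of $\widehat{e}_{t}=\varepsilon_{t}+o_{p}(1)$ established in Proposition \ref{Consistency}, these sample moments are uniformly bounded by $|\gamma_{1}-\gamma_{2}|\cdot H_{n}$ for the pieces relevant to (a), and by $|\gamma_{1}-\gamma_{2}|\,(|\gamma_{0}-\gamma^{\prime}|+\epsilon)^{2j}\cdot H_{n}$ for the pieces relevant to (b), where $H_{n}$ denotes a generic positive $O_{p}(1)$ sequence arising from the replacement of conditional expectations by empirical averages.

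I would then chain these pointwise bounds into a supremum bound by following exactly the Billingsley Theorem 12.2 / Hansen Lemma A.3 argument: lay down a grid $\gamma_{k}=\gamma^{\prime}+k/n$ for $k=0,\dots,[\epsilon n]+1$, split the sup as in (\ref{eq:max_2b_1}) into a maximum over grid points plus a maximum within cells, use the fourth-moment bound and Theorem 12.2 on the grid part, and absorb the within-cell fluctuation using the monotonicity of the indicator in $\gamma$ (so that within a cell the extra fluctuation is controlled by one additional indicator $\mathbf{1}_{t}(\gamma_{k-1};\gamma_{k})$ whose conditional second moment is $O(1/n)\cdot H_{n}$). This yields
\begin{equation*}
E^{\ast}\sup_{\gamma^{\prime}<\gamma<\gamma^{\prime}+\epsilon}\bigl|J_{n}^{\ast}(\gamma^{\prime},\gamma)\bigr|\le C\,\epsilon^{1/2}\,H_{n},\quad E^{\ast}\sup_{\gamma^{\prime}<\gamma<\gamma^{\prime}+\epsilon}\bigl|J_{1n}^{\ast}(\gamma^{\prime},\gamma)\bigr|\le C\,\epsilon^{1/2}(\epsilon+|\gamma_{0}-\gamma^{\prime}|)^{j}H_{n},
\end{equation*}
and the two probability bounds (\ref{propboot_1}) and (\ref{propboot_2}) follow by conditional Markov's inequality, with $\zeta$ absorbing the constant $C$.

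The main obstacle I anticipate is the uniform (in $\gamma_{1},\gamma_{2}$) control of the empirical moment expressions involving the residuals $\widehat{e}_{t}$, since these depend on the random $\widehat{\theta}$ through an indicator that changes across the parameter space. The clean way around this is to write $\widehat{e}_{t}=\varepsilon_{t}+r_{t}$ where $r_{t}=-(\widehat{\alpha}-\alpha_{0})^{\prime}x_{t}(\widehat{\gamma})-\alpha_{0}^{\prime}(x_{t}(\widehat{\gamma})-x_{t}(\gamma_{0}))$; the $\varepsilon_{t}$ contribution is handled by the non-bootstrap Lemma \ref{lem:maxineq2} applied pathwise, while the $r_{t}$ contribution is $o_{p}(1)$ uniformly and hence absorbed into $H_{n}$ using Proposition \ref{Consistency}. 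Everything else is bookkeeping identical in form to Hansen (2000, Lemma A.3).
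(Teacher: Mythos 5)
Your overall architecture matches the paper's: decompose $\widehat{e}_t=\varepsilon_t+r_t$, exploit conditional independence of the wild-bootstrap multipliers to control fourth moments (Rosenthal in place of Peligrad is the right substitution), and chain via the Billingsley/Hansen Lemma~A.3 machinery. However, the central step in your plan has a genuine gap. You claim that the conditional second moment $n^{-1}\sum_{t}\|x_{t}\|^{2}\widehat{e}_{t}^{2}\mathbf{1}_{t}(\gamma_{1};\gamma_{2})$ (and its fourth-moment analogue) is bounded by $|\gamma_{1}-\gamma_{2}|\cdot H_{n}$ uniformly in $\gamma_{1},\gamma_{2}$, justified by ``the uniform law of large numbers.'' This is not a ULLN statement: the ULLN controls the additive deviation $\sup_{\gamma_1,\gamma_2}|n^{-1}\sum_t(\cdot)-E(\cdot)|$, which is $o_p(1)$, but it does \emph{not} control the ratio $n^{-1}\sum_t(\cdot)/|\gamma_1-\gamma_2|$ as $|\gamma_1-\gamma_2|$ shrinks to the grid scale $\sim n^{-1}$. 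Indeed, on a cell of width $1/n$ the empirical moment behaves like $n^{-1}\max_t\|x_t\|^2\widehat{e}_t^2$, which under a fourth-moment condition is of order $n^{-1/2}$ in the worst case, far larger than the target $n^{-1}\cdot H_n$. So the uniform pathwise bound you assert is false as stated.

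The paper resolves exactly this by \emph{not} attempting a uniform conditional bound. After bounding $E^{*}|J_{n}^{*}(\gamma_{j},\gamma_{k})|^{4}$ by the empirical quantity in its display~(\ref{boot_12}), it takes a further \emph{unconditional} expectation of each piece, shows this is $\le C(k-j)^{2}\zeta_{m}^{2}$, and only at the very end converts to an $O_p$ statement (``if a sequence of random variables has finite first moments, it implies that it is $O_p(1)$''). That is, the $H_n$ multiplier is harvested once, from the Markov inequality applied to the \emph{summed} unconditional bound, not cell-by-cell. Your plan would go through if you replaced the pathwise claim with this two-stage expectation argument. The phrase ``Lemma~\ref{lem:maxineq2} applied pathwise'' is also not quite right --- that lemma is a bound on an expectation, not a realization-wise statement --- and you should also make explicit, as the paper does, the domination $\mathbf{1}_{t}(\gamma_{j};\gamma_{k})\mathbf{1}_{t}(\gamma_{0};\widehat{\gamma})\le\mathbf{1}_{t}(\gamma_{j};\gamma_{k})$ so that the $\widehat{\gamma}$-dependent indicator in $r_t$ does not spoil the uniformity of the bound in $\gamma$.
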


\begin{proof}
We shall assume for notational simplicity that $\gamma _{0}<\widehat{\gamma }
$, and that $\gamma _{j}=\gamma _{1}+\frac{\zeta }{m}j$ and $n\zeta
/2<m<n\zeta $, as $n$ can be chosen such that $n\zeta >1$. By definition, 
\begin{eqnarray*}
J_{n}^{\ast }\left( \gamma _{k},\gamma _{j}\right) &=&\frac{1}{n^{1/2}}
\sum_{t=1}^{n}x_{t}\varepsilon _{t}\boldsymbol{1}_{t}\left( \gamma
_{j};\gamma _{k}\right) \eta _{t} \\
&&+\frac{1}{n^{1/2}}\sum_{t=1}^{n}x_{t}x_{t}^{\prime }\boldsymbol{1}
_{t}\left( \gamma _{j};\gamma _{k}\right) \eta _{t}\left( \widehat{\beta }
-\beta \right) \\
&&+\frac{1}{n^{1/2}}\sum_{t=1}^{n}x_{t}x_{t}^{\prime }\boldsymbol{1}
_{t}\left( \gamma _{0}\right) \boldsymbol{1}_{t}\left( \gamma _{j};\gamma
_{k}\right) \eta _{t}\left( \widehat{\delta }-\delta \right) \\
&&+\frac{1}{n^{1/2}}\sum_{t=1}^{n}x_{t}x_{t}^{\prime }\boldsymbol{1}
_{t}\left( \gamma _{0};\widehat{\gamma }\right) \boldsymbol{1}_{t}\left(
\gamma _{j};\gamma _{k}\right) \eta _{t}\widehat{\delta }\text{.}
\end{eqnarray*}

Now by standard inequalities and that $\eta _{t}\sim iid\left( 0,1\right) $
with a finite fourth moments, the fourth (bootstrap) moment of the right
side of last displayed equation is bounded by 
\begin{eqnarray}
&&\left\vert \frac{1}{n}\sum_{t=1}^{n}\left\Vert x_{t}\right\Vert
^{2}\varepsilon _{t}^{2}\boldsymbol{1}_{t}\left( \gamma _{j};\gamma
_{k}\right) \right\vert ^{2}+\left\Vert \widehat{\beta }-\beta \right\Vert
^{4}\left\vert \frac{1}{n}\sum_{t=1}^{n}\left\Vert x_{t}\right\Vert ^{4} 
\boldsymbol{1}_{t}\left( \gamma _{j};\gamma _{k}\right) \right\vert ^{2} 
\notag \\
&&+\left\Vert \widehat{\delta }-\delta \right\Vert ^{4}\left\vert \frac{1}{n}
\sum_{t=1}^{n}\left\Vert x_{t}\right\Vert ^{4}\boldsymbol{1}_{t}\left(
\gamma _{j};\gamma _{k}\right) \boldsymbol{1}_{t}\left( \gamma _{0}\right)
\right\vert ^{2}  \label{boot_12} \\
&&+\left\Vert \widehat{\delta }\right\Vert ^{4}\left\vert \frac{1}{n}
\sum_{t=1}^{n}\left\Vert x_{t}\right\Vert ^{4}\boldsymbol{1}_{t}\left(
\gamma _{j};\gamma _{k}\right) \boldsymbol{1}_{t}\left( \gamma _{0};\widehat{
\gamma }\right) \right\vert ^{2}\text{.}  \notag
\end{eqnarray}
Because for fixed $\zeta >0$, there exists $n_{0}$ such that for $n>n_{0}$, $%
Cn^{-1}<\zeta $, the expectation of the first term of $\left( \ref{boot_12}
\right) $ is bounded by 
\begin{equation*}
C\left[ \left( k-j\right) \zeta _{m}+\left( \frac{\left( k-j\right) \zeta
_{m}}{n}\right) ^{1/2}\right] ^{2}\leq C\left( k-j\right) ^{2}\zeta _{m}^{2} 
\text{,}
\end{equation*}
arguing similarly as in Hansen's $\left( 2000\right) $ Lemma A.3 and $\zeta
_{m}=\zeta /m$.

Next, recalling that $\widehat{\gamma }=\gamma _{0}+D/n^{1/3}$, because $%
\boldsymbol{1}\left( \gamma _{j}<q_{t}<\gamma _{k}\right) \boldsymbol{1}
\left( \gamma _{0}<q_{t}<\widehat{\gamma }\right) \leq \boldsymbol{1}\left(
\gamma _{j}<q_{t}<\gamma _{k}\right) $, the expectation of the fourth term
of $\left( \ref{boot_12}\right) $ is bounded by 
\begin{eqnarray*}
&&\left\vert E\left\{ \left\Vert x_{t}\right\Vert ^{4}\boldsymbol{1}
_{t}\left( \gamma _{j};\gamma _{k}\right) \right\} \right\vert
^{2}+\left\vert \frac{1}{n}\sum_{t=1}^{n}\left\{ \left\Vert x_{t}\right\Vert
^{4}\boldsymbol{1}_{t}\left( \gamma _{j};\gamma _{k}\right) -E\left\{
\left\Vert x_{t}\right\Vert ^{4}\boldsymbol{1}_{t}\left( \gamma _{j};\gamma
_{k}\right) \right\} \right\} \right\vert ^{2} \\
&\leq &C\left( k-j\right) ^{2}\zeta _{m}^{2}\text{.}
\end{eqnarray*}

Finally, the second and third terms of $\left( \ref{boot_12}\right) $ are 
\begin{equation*}
H_{n}\frac{1}{n^{3}}\sum_{t=1}^{n}E\left( \left\Vert x_{t}\right\Vert ^{8} 
\boldsymbol{1}_{t}\left( \gamma _{j};\gamma _{k}\right) \right) =H_{n}\left(
k-j\right) ^{2}\zeta _{m}^{2}\text{.}
\end{equation*}
From here we now conclude that $\left( \ref{propboot_1}\right) $ holds true,
so is the lemma proceeding as in Hansen's $\left( 2000\right) $ Lemma A.3
and in particular his expressions $\left( 20\right) -\left( 22\right) $
because if a sequence of random variables has finite first moments, it
implies that it is $O_{p}\left( 1\right) $. The proof of $\left( \ref%
{propboot_2}\right) $ proceeds similarly and thus omitted.
\end{proof}

\begin{remark}
One of the consequences of the previous lemma is that 
\begin{equation*}
nE^{\ast }\sup_{g_{1}<g<g_{1}+\epsilon }\left\vert J_{n}^{\ast }\left(
\gamma _{0}+g/r_{n}\right) -J_{n}^{\ast }\left( \gamma
_{0}+g_{1}/r_{n}\right) \right\vert =\left( \epsilon +g_{1}\right) \epsilon
^{1/2}H_{n}\text{,}
\end{equation*}
which can be made small by choosing small $\epsilon $ and $r_{n}\rightarrow
\infty $.
\end{remark}

\begin{lemma}
\label{lem:4propositionBoot}Under $\mathbf{K1,K2}$ and $\mathbf{K4}$\textbf{%
\ \ , }we have that for integers $0\leq \ell ,r\leq 4$, 
\begin{eqnarray}
&&\frac{1}{na^{1+\ell }}\sum_{t=1}^{n}\varepsilon _{t}^{\ast
2}x_{t2}^{r}q_{t}^{\ell }K\left( \frac{q_{t}-\widehat{\gamma }^{\ast }}{a}
\right) -h_{r,\ell }\left( 0\right) =o_{p^{\ast }}\left( 1\right)  \notag \\
&&\frac{1}{na^{1+\ell }}\sum_{t=1}^{n}x_{t2}^{r}q_{t}^{\ell }K\left( \frac{
q_{t}-\widehat{\gamma }^{\ast }}{a}\right) -h_{r,\ell }^{\ast }\left(
0\right) =o_{p^{\ast }}\left( 1\right) \text{.}  \label{kernel_22Boot}
\end{eqnarray}
\end{lemma}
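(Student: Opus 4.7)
\textbf{Proof plan for Lemma \ref{lem:4propositionBoot}.} The strategy is to reduce both displays to their data-world analogues in Lemma \ref{lem:4proposition}, exploiting two facts: (i) Proposition \ref{ConsistencyBoot} provides $\widehat{\gamma}^\ast-\gamma_0=O_{p^\ast}(n^{-1/3})$, exactly the rate of $\widehat{\gamma}$ relied on in Lemma \ref{lem:4proposition}; and (ii) the bootstrap noise in the first display factorises as $\varepsilon_t^{\ast 2}=\widehat{\varepsilon}_t^{2}\eta_t^{2}$ with $\widehat{\varepsilon}_t$ data-measurable and $\{\eta_t^2\}$ iid, mean one, with finite fourth moment. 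Throughout we may assume $\gamma_0=0$ and work on the event $\{|\widehat{\gamma}^\ast|\le C n^{-1/3}\}$, whose bootstrap probability can be made arbitrarily close to one by Proposition \ref{ConsistencyBoot}.

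For the second equality, the summand $x_{t2}^{r}q_t^{\ell}K_t(\gamma)/(na^{1+\ell})$ is data-measurable for every fixed $\gamma$, so the only bootstrap-random object is $\widehat{\gamma}^\ast$. I would copy the argument of Lemma \ref{lem:4proposition} verbatim: standard kernel computations under K1 and K4 give $\frac{1}{na^{1+\ell}}\sum_t x_{t2}^{r}q_t^{\ell}K_t(0)-h_{r,\ell}^\ast(0)=o_p(1)$, and the split used in (\ref{kernel_31})--(\ref{k_1}), together with K2, gives $\sup_{|\gamma|\le Cn^{-1/3}}|\frac{1}{na^{1+\ell}}\sum_t x_{t2}^{r}q_t^{\ell}\{K_t(\gamma)-K_t(0)\}|=o_p(1)$. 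Specialising to $\gamma=\widehat{\gamma}^\ast$ on the high-probability event gives the claim in bootstrap probability.

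For the first equality, set $W_t:=\widehat{\varepsilon}_t^{2}x_{t2}^{r}q_t^{\ell}K_t(\widehat{\gamma}^\ast)$ and $T_n:=\frac{1}{na^{1+\ell}}\sum_t W_t\eta_t^{2}$. I would first decompose $T_n=\frac{1}{na^{1+\ell}}\sum_t W_t+\frac{1}{na^{1+\ell}}\sum_t W_t(\eta_t^2-1)$, and bound the conditional variance of the residual term by $C(na^{1+\ell})^{-2}\sum_t W_t^2$. Since the kernel localises $q_t$ within an $O(a)$-band of $\widehat{\gamma}^\ast$ and $\widehat{\gamma}^\ast/a\to 0$ under K4, standard kernel bounds give $E\sum_t W_t^2=O(na^{2\ell+1})$ using the fourth moment conditions in Assumption Z, whence $\operatorname{Var}^\ast(T_n)=O_p((na)^{-1})=o_p(1)$. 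It then remains to show $\frac{1}{na^{1+\ell}}\sum_t W_t\xrightarrow{p}h_{r,\ell}(0)$. Substituting the expansion
\[
\widehat{\varepsilon}_t^{2}=\varepsilon_t^{2}+O_p(n^{-1/2})\|x_t\|\varepsilon_t+2\delta_{30}\varepsilon_t q_t\boldsymbol{1}_t(0;\widehat{\gamma})+\|x_t\|O_p(n^{-2/3}),
\]
from display (\ref{eps}), the leading piece $\frac{1}{na^{1+\ell}}\sum_t \varepsilon_t^{2}x_{t2}^{r}q_t^{\ell}K_t(\widehat{\gamma}^\ast)$ is exactly the object of the first display of Lemma \ref{lem:4proposition}, whose proof uses only $\widehat{\gamma}=O_p(n^{-1/3})$, a property that $\widehat{\gamma}^\ast$ also enjoys in bootstrap probability. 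The three remainder pieces vanish after multiplication by $(na^{1+\ell})^{-1}\sum_t$ using the same kind of bounds as in Lemma \ref{lem:4proposition_1}, giving rates $O_p(n^{-1/2})$, $O_p(n^{-1/3})$ and $O_p(n^{-2/3})$ respectively.

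\textbf{Main obstacle.} The delicate step is the transfer of the central argument of Lemma \ref{lem:4proposition} from $\widehat{\gamma}$ to $\widehat{\gamma}^\ast$, coupled with the presence of the data-dependent factor $\widehat{\varepsilon}_t^{2}$. One must verify that no bound in the original proof secretly relies on $\widehat{\gamma}$ being measurable with respect to the sampling probability (it does not: only the rate and the smoothness of $K$ via K2 enter), and that the ``constants'' $O_p(\cdot)$ in the expansion of $\widehat{\varepsilon}_t^{2}$ act as harmless scalars inside the bootstrap-conditional analysis. Once this bookkeeping is in place, the variance bound for the $\eta_t^{2}$-piece and the reduction to Lemma \ref{lem:4proposition} proceed routinely.
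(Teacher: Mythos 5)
There is a genuine gap in your treatment of the first display. You define $W_t:=\widehat{\varepsilon}_t^{2}x_{t2}^{r}q_t^{\ell}K_t(\widehat{\gamma}^\ast)$ and then bound $\operatorname{Var}^\ast$ of $\frac{1}{na^{1+\ell}}\sum_t W_t(\eta_t^2-1)$ by $C(na^{1+\ell})^{-2}\sum_t W_t^2$, which requires $W_t$ to be measurable with respect to the original data (i.e.\ fixed under $P^\ast$). It is not: $W_t$ contains $K_t(\widehat{\gamma}^\ast)$ and $\widehat{\gamma}^\ast$ is a function of $\{\eta_s\}_{s=1}^n$, so $W_t$ and $\eta_t^2-1$ are not conditionally uncorrelated and the pointwise variance factorisation fails. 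You flag the measurability issue in your final paragraph but conclude it causes no trouble, which is backwards — this is exactly where the argument breaks.

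The paper sidesteps the problem by splitting $K_t(\widehat{\gamma}^\ast)=K_t(0)+\{K_t(\widehat{\gamma}^\ast)-K_t(0)\}$ before introducing any conditional moment bound. For the $K_t(0)$ piece, the non-$\eta$ factors are genuinely data-measurable, so a standard conditional-variance argument gives the claimed $O_{p^\ast}((na)^{-1/2})$ term. For the difference piece, the paper first restricts to the data-measurable event $\{|\widehat{\gamma}^\ast|\le Cn^{-1/3}\}$ (via Proposition \ref{ConsistencyBoot}) and then replaces $\widehat{\gamma}^\ast$ by $\sup_{|\gamma|\le Cn^{-1/3}}$ \emph{inside} the bound, so the supremum is taken over a non-random index set; only then does it evaluate the bootstrap expectation, writing $E^\ast[\varepsilon_t^{\ast 2}]$ (a data-measurable quantity) and applying K1--K2, K4 as in Lemma \ref{lem:4proposition}. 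If you adopt that order of operations — decouple the $\gamma$-argument from $\widehat{\gamma}^\ast$ by uniformizing before taking $E^\ast$ — your variance estimate becomes legitimate and the rest of your reduction to the data-world Lemma \ref{lem:4proposition} goes through. A secondary remark: substituting the expansion of $\widehat{\varepsilon}_t^2$ from display (\ref{eps}) is not needed at the level of this lemma (the paper treats $\varepsilon_t^{\ast 2}=\widehat{\varepsilon}_t^2\eta_t^2$ directly here and only expands residuals later, in the proof of Proposition \ref{Prop:xihatstar}), so that step adds work without buying anything.
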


\begin{proof}
We shall consider only the first equality in $\left( \ref{kernel_22Boot}
\right) $, the second one being similarly handled. Now standard kernel
arguments imply 
\begin{equation*}
\frac{1}{na^{1+\ell }}\sum_{t=1}^{n}\varepsilon _{t}^{\ast
2}x_{t2}^{r}q_{t}^{\ell }K_{t}\left( 0\right) -h_{r,\ell }\left( 0\right)
=O_{p^{\ast }}\left( \left( na\right) ^{-1/2}\right) +o_{p}\left( a^{4-\ell
}\right) \text{.}
\end{equation*}
So, to complete the proof of the lemma, it suffices to show that 
\begin{equation}
\frac{1}{na^{1+\ell }}\sum_{t=1}^{n}\varepsilon _{t}^{\ast
2}x_{t2}^{r}q_{t}^{\ell }\left\{ K_{t}\left( \widehat{\gamma }^{\ast
}\right) -K_{t}\left( 0\right) \right\} =o_{p^{\ast }}\left( 1\right) \text{.%
}  \label{kernel_3Boot}
\end{equation}%
Proposition \ref{Prop:ConsistencyBoot} implies that there exists $C>0$ such that $%
\Pr^{\ast }\left\{ \left\vert \widehat{\gamma }^{\ast }\right\vert
>Cn^{-1/3}\right\} \leq H_{n}$. So, we only need to show that $\left( \ref%
{kernel_3}\right) $ holds true when $\left\vert \widehat{\gamma }^{\ast
}\right\vert \leq Cn^{-1/3}$, so that we have that the left side of $\left( %
\ref{kernel_3Boot}\right) $ is bounded by 
\begin{eqnarray}
&&\sup_{\left\vert \gamma \right\vert \leq Cn^{-1/3}}\left\vert \frac{1}{
na^{1+\ell }}\sum_{t=1}^{n}\varepsilon _{t}^{\ast 2}x_{t2}^{r}q_{t}^{\ell
}\left\{ K_{t}\left( \gamma \right) -K_{t}\left( 0\right) \right\}
\right\vert  \notag \\
&\leq &\sup_{\left\vert \gamma \right\vert \leq Cn^{-1/3}}\left\vert \frac{1%
}{na^{1+\ell }}\sum_{t=1}^{n}\varepsilon _{t}^{\ast 2}x_{t2}^{r}q_{t}^{\ell
}\left\{ K_{t}\left( \gamma \right) -K_{t}\left( 0\right) \right\} \mathbf{1}%
\left( \left\vert q_{t}\right\vert <a^{1/2}\right) \right\vert
\label{kernel_31Boot} \\
&&+\sup_{\left\vert \gamma \right\vert \leq Cn^{-1/3}}\left\vert \frac{1}{
na^{1+\ell }}\sum_{t=1}^{n}\varepsilon _{t}^{\ast 2}x_{t2}^{r}q_{t}^{\ell
}\left\{ K_{t}\left( \gamma \right) -K_{t}\left( 0\right) \right\} \mathbf{1}%
\left( \left\vert q_{t}\right\vert \geq a^{1/2}\right) \right\vert \text{.} 
\notag
\end{eqnarray}

The expectation of second term on the right of $\left( \ref{kernel_31Boot}%
\right) $ is bounded by 
\begin{eqnarray*}
&&\frac{C_{1}}{na}\sum_{t=1}^{n}E^{\ast }\left( \varepsilon _{t}^{\ast
2}\left\vert x_{t2}\right\vert ^{r}\left\vert \frac{q_{t}}{a}\right\vert
^{\ell }K\left( \frac{q_{t}}{a}\right) \mathbf{1}\left( \left\vert
q_{t}\right\vert \geq a^{1/2}\right) \right) \\
&=&\frac{C_{1}}{na}\sum_{t=1}^{n}\left\vert x_{t2}\right\vert ^{r}\left\vert 
\frac{q_{t}}{a}\right\vert ^{\ell }K\left( \frac{q_{t}}{a}\right) \mathbf{1}
\left( \left\vert q_{t}\right\vert \geq a^{1/2}\right) \frac{1}{n}
\sum_{s=1}^{n}\widehat{\varepsilon }_{t}^{2} \\
&=&\frac{C_{1}}{na}\sum_{t=1}^{n}\left\vert x_{t2}\right\vert ^{r}\left\vert 
\frac{q_{t}}{a}\right\vert ^{\ell }K\left( \frac{q_{t}}{a}\right) \mathbf{1}
\left( \left\vert q_{t}\right\vert \geq a^{1/2}\right) H_{n}\text{,}
\end{eqnarray*}%
where $C_{1}$ denotes a generic positive finite constant. Now, 
\begin{equation*}
E\frac{1}{na}\sum_{t=1}^{n}\left\vert x_{t2}\right\vert ^{r}\left\vert \frac{
q_{t}}{a}\right\vert ^{\ell }K\left( \frac{q_{t}}{a}\right) \mathbf{1}\left(
\left\vert q_{t}\right\vert \geq a^{1/2}\right) =o\left( a^{2-\ell /4}\right)
\end{equation*}%
proceeding as we did in Lemma \ref{lem:4proposition}. So, we conclude that
right of $\left( \ref{kernel_31Boot}\right) $ is $o\left( a^{2-\ell
/4}\right) H_{n}$.

For some $0<\psi <1$, the first term on the right of $\left( \ref%
{kernel_31Boot}\right) $ is bounded by 
\begin{eqnarray}
&&\frac{C_{1}}{n^{1/3}}\sup_{\left\vert \gamma \right\vert \leq
Cn^{-1/3}}\left\vert \frac{1}{na^{2}}\sum_{t=1}^{n}\varepsilon _{t}^{\ast
2}\left\vert x_{t2}\right\vert ^{r}\left\vert \frac{q_{t}}{a}\right\vert
^{\ell }K^{\prime }\left( \frac{q_{t}-\psi \gamma }{a}\right) \mathbf{1}%
\left( \left\vert q_{t}\right\vert <a^{1/2}\right) \right\vert  \notag \\
&\leq &\frac{C_{1}}{n^{1/3}}\left\vert \frac{1}{na^{2}}\sum_{t=1}^{n}%
\varepsilon _{t}^{\ast 2}\left\vert x_{t2}\right\vert ^{r}\left\vert \frac{
q_{t}}{a}\right\vert ^{\ell }K^{\prime }\left( \frac{q_{t}}{a}\right) 
\mathbf{1}\left( a^{3/2}<\left\vert q_{t}\right\vert <a^{1/2}\right)
\right\vert  \label{ineqBoot} \\
&&+\frac{C_{1}}{n^{1/3}}\sup_{\left\vert \gamma \right\vert \leq
Cn^{-1/3}}\left\vert \frac{1}{na^{2}}\sum_{t=1}^{n}\varepsilon _{t}^{\ast
2}\left\vert x_{t2}\right\vert ^{r}\left\vert \frac{q_{t}}{a}\right\vert
^{\ell }K^{\prime }\left( \frac{q_{t}-\phi \gamma }{a}\right) \mathbf{1}%
\left( \left\vert q_{t}\right\vert <a^{3/2}\right) \right\vert  \notag
\end{eqnarray}%
because $\mathbf{K4}$ implies that $\gamma =o\left( a\right) $ when $%
\left\vert \gamma \right\vert \leq Cn^{-1/3}$, and hence $\left\vert
K^{\prime }\left( \frac{q_{t}-\phi \gamma }{a}\right) /K^{\prime }\left( 
\frac{q_{t}}{a}\right) \right\vert \leq C_{1}$ by $\mathbf{K2}$ if $%
a^{3/2}<\left\vert q_{t}\right\vert <a^{1/2}$. But, it is well known that
the first moment of the first term on the right of $\left( \ref{ineqBoot}%
\right) $ is bounded, whereas that of the second term on the right is also
bounded because $E\left\vert \frac{q_{t}}{a}\right\vert ^{\ell }\mathbf{1}%
\left( \left\vert q_{t}\right\vert <a^{3/2}\right) <a^{\left( \ell +3\right)
/2}$ and $\left( \ref{k_1}\right) $. So, the expectation of the first term
on the right of $\left( \ref{kernel_31Boot}\right) $ is $O_{p}\left(
n^{-1/3}\right) $. This concludes the proof of the lemma.
\end{proof}

\begin{lemma}
\label{lem:4proposition_1Boot}Under $\mathbf{K1-K4}$\textbf{, }we have that
for integers $0\leq r,\ell \leq 4$, 
\begin{equation}
\frac{1}{na}\sum_{t=1}^{n}x_{t2}^{r}q_{t}^{\ell }K_{t}\left( \widehat{\gamma 
}^{\ast }\right) \varepsilon _{t}^{\ast }=o_{p^{\ast }}\left( a^{\ell
}n^{1/2}\right) \text{.}  \label{prop9_1Boot}
\end{equation}
\end{lemma}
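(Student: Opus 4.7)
The plan is to transplant the proof of Lemma \ref{lem:4proposition_1} to the bootstrap setting, using Proposition \ref{ConsistencyBoot} to control $\widehat{\gamma}^{\ast}$ and the conditional moment structure of $\varepsilon_{t}^{\ast}=\widehat{\varepsilon}_{t}\eta_{t}$. Throughout, I would condition on the data $\mathcal{X}_{n}=\{(y_{t},x_{t})\}_{t=1}^{n}$ and exploit that $\eta_{t}$ are iid with zero mean, unit variance, and finite fourth moment, while $\frac{1}{n}\sum\widehat{\varepsilon}_{t}^{2}=O_{p}(1)$ and $\frac{1}{n}\sum\widehat{\varepsilon}_{t}^{4}=O_{p}(1)$ follow from Assumption \textbf{Z} and the consistency of $\widehat{\theta}$.

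First I would split
\[
\frac{1}{na}\sum_{t=1}^{n}x_{t2}^{r}q_{t}^{\ell}K_{t}(\widehat{\gamma}^{\ast})\varepsilon_{t}^{\ast}=\frac{1}{na}\sum_{t=1}^{n}x_{t2}^{r}q_{t}^{\ell}K_{t}(0)\varepsilon_{t}^{\ast}+\frac{1}{na}\sum_{t=1}^{n}x_{t2}^{r}q_{t}^{\ell}\{K_{t}(\widehat{\gamma}^{\ast})-K_{t}(0)\}\varepsilon_{t}^{\ast}.
\]
The first sum is a sum of conditionally mean-zero bootstrap variables whose conditional second moment is $\frac{1}{n^{2}a^{2}}\sum x_{t2}^{2r}q_{t}^{2\ell}K_{t}(0)^{2}\widehat{\varepsilon}_{t}^{2}$; by the standard change of variable $q\mapsto q/a$ and the law of large numbers this is $O_{p}(n^{-1}a^{2\ell-1})$, giving $O_{p^{\ast}}(n^{-1/2}a^{\ell-1/2})$, which is absorbed into $o_{p^{\ast}}(a^{\ell}n^{1/2})$ by $\mathbf{K4}$.

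For the difference term, I would follow Lemma \ref{lem:4proposition_1} verbatim and partition by $\mathbf{1}(|q_{t}|\geq a^{\zeta})$ versus $\mathbf{1}(|q_{t}|<a^{\zeta})$, with $\zeta=1-2/\ell$ for $\ell>2$ and any $\zeta<1$ otherwise. On the tail event $|q_{t}|\geq a^{\zeta}$, the conditional second moment bound reduces to an integral estimate of the same form as in the original proof, with $E(\varepsilon_{t}^{2}\mid q_{t}=q)$ replaced by $\widehat{\varepsilon}_{t}^{2}$; the law of large numbers together with Assumption \textbf{Q} controls $\frac{1}{n}\sum\widehat{\varepsilon}_{t}^{2}K(q_{t}/a)\mathbf{1}(|q_{t}|\geq a^{\zeta})$, yielding $o_{p^{\ast}}(a^{\ell})$. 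On the core event $|q_{t}|<a^{\zeta}$, I would use Assumption $\mathbf{K3}$ to write
\[
K_{t}(\widehat{\gamma}^{\ast})-K_{t}(0)=\int\phi(av)\bigl(e^{iv\widehat{\gamma}^{\ast}}-1\bigr)e^{ivq_{t}}\,dv,
\]
interchange sum and integral, and bound by
\[
|\widehat{\gamma}^{\ast}|\cdot\int v|\phi(av)|\,dv\cdot\sup_{v}\Bigl|\frac{1}{n}\sum_{t=1}^{n}q_{t}^{\ell}x_{t2}^{r}\varepsilon_{t}^{\ast}e^{ivq_{t}}\mathbf{1}(|q_{t}|<a^{\zeta})\Bigr|.
\]
The bootstrap conditional second moment of the inner average equals $\frac{1}{n^{2}}\sum q_{t}^{2\ell}x_{t2}^{2r}\widehat{\varepsilon}_{t}^{2}\mathbf{1}(|q_{t}|<a^{\zeta})=O_{p}(n^{-1}a^{\zeta(2\ell+1)})$ uniformly in $v$, so combining with $\widehat{\gamma}^{\ast}=O_{p^{\ast}}(n^{-1/3})$, $\int v|\phi(av)|\,dv=O(a^{-2})$, and $\mathbf{K4}$ yields the required $o_{p^{\ast}}$ rate after elementary algebra, exactly parallel to the closing calculation in Lemma \ref{lem:4proposition_1}.

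The main obstacle will be establishing the uniform-in-$v$ control of the characteristic-function integrand on the bootstrap side: unlike the original proof, where $E|q_{t}^{\ell}\varepsilon_{t}e^{ivq_{t}}|^{2}$ is automatically bounded by a deterministic quantity, here one must pass a uniform bound through the conditional second moment $\frac{1}{n}\sum q_{t}^{2\ell}\widehat{\varepsilon}_{t}^{2}\mathbf{1}(|q_{t}|<a^{\zeta})$, which is random. This is handled by noting that the envelope does not depend on $v$ and then invoking the integrability of $v\phi(v)$ from $\mathbf{K3}$ to interchange the supremum with the integral via Cauchy--Schwarz. Once this step is in place the remaining manipulations duplicate those of Lemma \ref{lem:4proposition_1}.
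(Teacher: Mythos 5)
Your decomposition follows the paper's proof step for step: split off the $K_t(0)$ piece, split the difference piece across $\mathbf{1}(|q_t|\geq a^\zeta)$ and $\mathbf{1}(|q_t|<a^\zeta)$, and handle the core region via the characteristic-function representation $K_t(\widehat\gamma^*)-K_t(0)=\int\phi(av)(e^{iv\widehat\gamma^*}-1)e^{ivq_t}\,dv$. The first two steps are fine, and your observation that the conditional second moment of $S_n(v):=\tfrac1n\sum q_t^\ell x_{t2}^r\varepsilon_t^*e^{ivq_t}\mathbf{1}(|q_t|<a^\zeta)$ is $v$-free is exactly the right ingredient.

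The gap is in the very last step, and it is a rate gap, not a presentational one. You bound the core term by
\[
|\widehat\gamma^*|\cdot\int v|\phi(av)|\,dv\cdot\sup_v|S_n(v)|,
\]
and then propose to control $\sup_v|S_n(v)|$ via the $v$-free envelope $\tfrac1n\sum|q_t|^\ell|x_{t2}|^r|\widehat\varepsilon_t\eta_t|\mathbf{1}(|q_t|<a^\zeta)$. That envelope has bootstrap first moment of order $a^{\zeta(\ell+1)}$, whereas the rate you need for $|S_n(v)|$ is $a^{\zeta\ell}n^{-1/2}$ (up to a harmless power of $a^\zeta$), coming from the pointwise-in-$v$ conditional $L^2$ bound. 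Under $\mathbf{K4}$ we have $a\gg n^{-1/3}$, hence $a^{\zeta}\gg n^{-1/2}$ for the relevant $\zeta<3/2$, so the envelope is strictly too large and does not deliver the conclusion. The remark about ``interchanging the supremum with the integral via Cauchy--Schwarz'' does not repair this, because there is no supremum inside an integral to move, and the envelope bound sits on the wrong side of the $n^{-1/2}$ gap.

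The fix, which is what the paper implicitly does, is to not pull a supremum out at all: keep $v$ under the integral and bound
\[
\left|\int\phi(av)(e^{iv\widehat\gamma^*}-1)S_n(v)\,dv\right|\leq|\widehat\gamma^*|\int v|\phi(av)|\,|S_n(v)|\,dv,
\]
then move the conditional expectation inside the $v$-integral (Fubini together with Jensen, or equivalently Minkowski's integral inequality) to get
\[
E^*\Bigl[\int v|\phi(av)|\,|S_n(v)|\,dv\Bigr]\leq\int v|\phi(av)|\bigl(E^*|S_n(v)|^2\bigr)^{1/2}dv=O_p\bigl(a^{\zeta\ell}n^{-1/2}\bigr)\int v|\phi(av)|\,dv,
\]
and finally $\int v|\phi(av)|\,dv=O(a^{-2})$ by $\mathbf{K3}$. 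Combined with $\widehat\gamma^*=O_{p^*}(n^{-1/3})$ and $\mathbf{K4}$ this gives the $O_{p^*}(a^\ell(na^3)^{-4/3}n^{1/2})$ rate from the paper, which is $o_{p^*}(a^\ell n^{1/2})$. Once you make this substitution, the rest of your argument goes through unchanged.
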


\begin{proof}
To simplify the notation, we assume that $r=0$. The left side of $\left( \ref%
{prop9_1Boot}\right) $ is 
\begin{equation*}
\frac{1}{na}\sum_{t=1}^{n}q_{t}^{\ell }\left\{ K_{t}\left( \widehat{\gamma }
^{\ast }\right) -K_{t}\left( 0\right) \right\} \varepsilon _{t}^{\ast }+ 
\frac{1}{na}\sum_{t=1}^{n}q_{t}^{\ell }K_{t}\left( 0\right) \varepsilon
_{t}^{\ast }\text{.}
\end{equation*}
The second term is easily shown to be $O_{p^{\ast }}\left( n^{-1/2}a^{\ell
-1/2}\right) $, whereas the first term is 
\begin{eqnarray}
&&\frac{1}{na}\sum_{t=1}^{n}q_{t}^{\ell }\left\{ K_{t}\left( \widehat{\gamma 
}^{\ast }\right) -K_{t}\left( 0\right) \right\} \varepsilon _{t}^{\ast } 
\mathbf{1}\left( \left\vert q_{t}\right\vert <a^{\zeta }\right)
\label{eps_1Boot} \\
&&+\frac{1}{na}\sum_{t=1}^{n}q_{t}^{\ell }\left\{ K_{t}\left( \widehat{
\gamma }^{\ast }\right) -K_{t}\left( 0\right) \right\} \varepsilon
_{t}^{\ast }\mathbf{1}\left( \left\vert q_{t}\right\vert \geq a^{\zeta
}\right) \text{,}  \notag
\end{eqnarray}
where $\zeta =1-2/\ell $ if $\ell >2$ and $\zeta <1$ if $\ell \leq 2$. The
second term of $\left( \ref{eps_1Boot}\right) $ is 
\begin{equation*}
a^{\ell }\frac{1}{na}\sum_{t=1}^{n}\left( \frac{q_{t}}{a}\right) ^{\ell
}\left\{ K_{t}\left( \widehat{\gamma }^{\ast }\right) -K_{t}\left( 0\right)
\right\} \varepsilon _{t}^{\ast }\mathbf{1}\left( \left\vert
q_{t}\right\vert \geq a^{\zeta }\right) \text{,}
\end{equation*}
whose first absolute bootstrap moment is 
\begin{eqnarray*}
&&a^{\ell }\frac{1}{na}\sum_{t=1}^{n}\left\vert \frac{q_{t}}{a}\right\vert
^{\ell }\left\vert K_{t}\left( \widehat{\gamma }^{\ast }\right) -K_{t}\left(
0\right) \right\vert \mathbf{1}\left( \left\vert q_{t}\right\vert \geq
a^{\zeta }\right) \frac{1}{n}\sum_{s=1}^{n}\left\vert \widehat{\varepsilon }
_{s}\right\vert \\
&&a^{\ell }\frac{1}{na}\sum_{t=1}^{n}\left\vert \frac{q_{t}}{a}\right\vert
^{\ell }\left\vert K_{t}\left( \widehat{\gamma }^{\ast }\right) -K_{t}\left(
0\right) \right\vert \mathbf{1}\left( \left\vert q_{t}\right\vert \geq
a^{\zeta }\right) H_{n}\text{.}
\end{eqnarray*}
Now, proceed as in Lemma \ref{lem:4propositionBoot} to conclude that second
term of $\left( \ref{eps_1Boot}\right) $ is $O_{p^{\ast }}\left( a^{\ell
}\right) $. So, to complete the proof we need to examine the first term of $%
\left( \ref{eps_1Boot}\right) $ which, as we did with the first term of $%
\left( \ref{eps_1}\right) $, is 
\begin{equation*}
\int \phi \left( av\right) \left( e^{iv\widehat{\gamma }^{\ast }}-1\right)
\left\{ \frac{1}{n}\sum_{t=1}^{n}q_{t}^{\ell }\varepsilon _{t}^{\ast
}e^{ivq_{t}}\mathbf{1}\left( \left\vert q_{t}\right\vert <a^{\zeta }\right)
\right\} dv\text{.}
\end{equation*}
But it is clear that the last displayed expression is bounded by 
\begin{eqnarray*}
&&\widehat{\gamma }^{\ast }\int v\left\vert \phi \left( av\right)
\right\vert \left\vert \frac{1}{n}\sum_{t=1}^{n}q_{t}^{\ell }\varepsilon
_{t}^{\ast }e^{ivq_{t}}\mathbf{1}\left( \left\vert q_{t}\right\vert
<a^{\zeta }\right) \right\vert dv=O_{p^{\ast }}\left( a^{\ell \zeta
}n^{-1/2} \widehat{\gamma }^{\ast }\right) \int v\left\vert \phi \left(
av\right) \right\vert dv \\
&=&O_{p^{\ast }}\left( a^{\ell }\left( na^{3}\right) ^{-4/3}n^{1/2}\right)
\end{eqnarray*}
using $\mathbf{K4}$ and that $\zeta =1-2/\ell $ if $\ell \geq 2$ and $\zeta
<1$ when $0\leq \ell <2$, $\widehat{\gamma }^{\ast }=O_{p^{\ast }}\left(
n^{-1/3}\right) $ and that by standard arguments, it yields 
\begin{equation*}
E^{\ast }\left\vert \frac{1}{n}\sum_{t=1}^{n}q_{t}^{\ell }\varepsilon
_{t}^{\ast }e^{ivq_{t}}\mathbf{1}\left( \left\vert q_{t}\right\vert
<a^{\zeta }\right) \right\vert ^{2}=O_{p}\left( a^{2\ell \zeta
}n^{-1}\right) \text{.}
\end{equation*}
This concludes the proof of the lemma.
\end{proof}

\end{document}